\documentclass[UTF8]{article}
\setcounter{secnumdepth}{4}

\usepackage[margin=1.5in]{geometry}  
\usepackage{graphicx}
\usepackage{amssymb}             
\usepackage{amsmath}
\usepackage{mathtools}
\usepackage{slashed}            
\usepackage{amsfonts}              
\usepackage{amsthm}
\usepackage{breqn}
\usepackage{verbatim}              
\usepackage{enumitem}
\usepackage[colorlinks,linkcolor=black,anchorcolor=black,citecolor=black]{hyperref}

\usepackage[numbers]{natbib}
\usepackage{mathtools}
\usepackage{cases}
\usepackage{fancyhdr}

\bibliographystyle{apa}

\theoremstyle{definition}
\newtheorem{thm}{Theorem}[section]

\newtheorem{lem}[thm]{Lemma}
\newtheorem{prop}[thm]{Proposition}

\pagestyle{fancy}
\headheight 27pt
\rhead[]{\thepage}
\chead[\textsf{Free-boundary MHD}]{\textsf{Luo and Zhang}}
\lhead[\thepage]{}
\rfoot[]{}
\cfoot{}
\lfoot{}

\newcommand{\R}{\mathbb{R}}

\newcommand{\T}{\mathbb{T}}
\newcommand{\TP}{\overline{\partial}}
\newcommand{\TL}{\overline{\Delta}}
\newcommand{\curl}{\text{curl }}
\newcommand{\dive}{\text{div }}

\newcommand{\q}{\quad}
\newcommand{\p}{\partial}

\newcommand{\DD}{\mathcal{D}}

\newcommand{\nab}{\nabla}

\newcommand{\lap}{\Delta}

\newcommand{\beq}{\overline{\epsilon}}
\newcommand{\di}{\text{div}\,}

\newcommand{\cp}{\overline{\partial}{}}

\newcommand{\vol}{\text{vol}\,}

\newcommand{\PP}{\mathcal{P}}

\newcommand{\tl}{\tilde}
\newcommand{\bt}{\bar{\theta}}
\numberwithin{equation}{section}
\setcounter{secnumdepth}{4}
\setcounter{tocdepth}{4}
\usepackage{xcolor}

\begin{document}
\title{A Regularity Result for the Incompressible Magnetohydrodynamics Equations with Free Surface Boundary}
\author{Chenyun Luo\thanks{Vanderbilt University, Nashville, TN, USA.
\texttt{chenyun.luo@vanderbilt.edu}}\,\, and Junyan Zhang \thanks{Johns Hopkins University, MD, USA.
\texttt{zhang.junyan@jhu.edu}} }
\maketitle

\begin{abstract}
We consider the three-dimensional incompressible magnetohydrodynamics (MHD) equations in a bounded domain with small volume and free moving surface boundary. We establish a priori estimate for solutions with minimal regularity assumptions on the initial data in Lagrangian coordinates.  In particular, due to the lack of the Cauchy invariance for MHD equations,  the smallness assumption on the fluid domain is required to compensate a loss of control of the flow map. Moreover,  we show that the magnetic field has certain regularizing effect which allows us to control the vorticity of the fluid  and that of the magnetic field. To the best of our knowledge this is the first result that focuses on the low regularity solution for incompressible free-boundary MHD equations. 
\end{abstract}
\tableofcontents
\section{Introduction}
The goal of this manuscript is to investigate the solutions in low regularity Sobolev spaces for the following incompressible inviscid MHD equations in a moving domain: 
\begin{equation}
\begin{cases}
\partial_t u+u\cdot\nabla u-B\cdot\nabla B+\nabla (p+\frac{1}{2}|B|^2)=0,~~~& \text{in}~\DD; \\
\partial_t B+u\cdot\nabla B-B\cdot\nabla u=0,~~~&\text{in}~\DD; \\
\dive u=0,~~\dive B=0,~~~&\text{in}~\DD,
\end{cases}\label{MHD}
\end{equation}
describing the motion of conducting fluids in an electromagnetic field, where $\DD={\cup}_{0\leq t\leq T}\{t\}\times \Omega(t)$ and $\Omega(t)\subset \R^3$ is the domain occupied by the fluid with small volume whose boundary $\p\Omega(t)$ moves with the velocity of the fluid.  Under this setting, the fluid velocity $u=(u_1,u_2,u_3)$, the magnetic field $B=(B_1,B_2,B_3)$, the fluid pressure $p$ and the domain $\DD$ are to be determined; in other words, given a simply connected bounded domain $\Omega(0)\subset \R^3$ and the initial data $u_0$ and $B_0$ satisfying the constraints $\di u_0=0$ and $\di B_0=0$, we want to find a set $\DD$ and the vector fields $u$ and $B$ solving \eqref{MHD} satisfying the initial conditions: 
\begin{equation}
\Omega(0)=\{x:  (0,x)\in \DD\},\q (u,B)=(u_0, B_0),\q \text{in}\,\,\{0\}\times \Omega_0.
\end{equation}
We also require the following boundary conditions on the free boundary $\p\DD={\cup}_{0\leq t\leq T}\{t\}\times \p\Omega(t)$: 
\begin{equation}
\begin{cases}
(\p_t+u\cdot \nabla )|_{\partial\DD}\in \mathcal{T}(\partial\DD) \\
p=0~~~&\text{on}~\partial\DD, \\
|B|=c,~~B\cdot \mathcal{N}=0~~~&\text{on}~\partial\DD,
\end{cases}\label{MHDB}
\end{equation} 
where $\mathcal{T}(\p\DD)$ is the tangent bundle of $\p\DD$, $\mathcal{N}$ is the exterior unit normal to $\p\Omega_t$ and $c\geq 0$ is a constant. The first condition of \eqref{MHDB} means that the boundary moves with the velocity of the fluid, the second condition of \eqref{MHDB} means that the region outside $\Omega_t$ is vacuum, where $B\cdot\mathcal{N}=0$ on $\p\Omega_t$ implies that the fluid is a perfect conductor; in other words, the induced electric field $\mathcal{E}$ satisfies $\mathcal{E}\times \mathcal{N}=0$ on $\p\Omega_t$. Also, the condition $|B|=c$ on $\p\Omega_t$ yields that the physical energy is conserved, i.e., denoting $D_t=\p_t+u\cdot \nab$, and invoking the divergence free condition for both $u$ and $B$, we have: 
\[
\begin{aligned}
&~~~~~\frac{d}{dt}\Big[\frac{1}{2}\int_{\Omega(t)} |u|^2 + \frac{1}{2}\int_{\Omega(t)} |B|^2\Big]\\
&=\int_{\Omega(t)} u\cdot D_tu+\int_{\Omega(t)}B\cdot D_t B\\
&=-\int_{\Omega(t)}u\cdot \nab (p+\frac{1}{2}|B|^2)+\int_{\Omega(t)}u\cdot (B\cdot \nab B) +\int_{\Omega(t)}B\cdot (B\cdot \nab u)\\
&=-\int_{\p\Omega(t)}(u\cdot \mathcal{N})p-\underbrace{\int_{\p\Omega(t)} \frac{1}{2}(u\cdot \mathcal{N})c^2}_{=0 \,\,\text{by Gauss theorem}}+\int_{\Omega(t)}u\cdot (B\cdot \nab B)-\int_{\Omega(t)}u\cdot (B\cdot \nab B)=0 .
\end{aligned}
\]

We will establish a priori bounds for the MHD equations \eqref{MHD}-\eqref{MHDB} when $u_0,B_0\in H^{2.5+\delta}(\Omega(0))$ for $\delta\in (0,0.5)$  under the physical sign condition
\begin{equation}
-\nabla_{\mathcal{N}} (p+\frac{1}{2}|B|^2)\geq \epsilon_0>0 \q \text{on}\,\,\p\Omega(t). 
\label{taylor}
\end{equation}
 We recall here that for the free-boundary problem of the motion of a incompressible fluid without magnetic field (i.e., the incompressible free-boundary Euler equations), the physical sign condition reads 
\begin{equation*}
-\nab_{\mathcal{N}} p \geq \epsilon_0>0\q \text{on}\,\,\p\Omega(t).
\end{equation*}
Condition \eqref{taylor} was first discovered by Hao and Luo \cite{hao2014priori} when proving the a priori energy estimate for the free boundary incompressible MHD equations with $H^4$ initial data. Very recently, they proved that \eqref{MHD}-\eqref{MHDB} is ill-posed when \eqref{taylor} is violated \cite{hao2018ill}. The quantity $p+\frac{1}{2}|B|^2$ (i.e., the total pressure) plays an important role here in our analysis. In fact, it determines the acceleration of the moving surface boundary.  

\subsection{History and background}
In the absence of the magnetic field $B$, the system \eqref{MHD} is reduced to the free-boundary Euler equations which has attracted much attention in the past two decades. Important progress has been made for both incompressible and compressible flows, with or without surface tension, and with or without vorticity. Without attempting to be exhaustive, we refer \cite{alazard2014cauchy, christodoulou2000motion, coutand2007LWP,   disconzi2017prioriI, ignatova2016local, kukavica2017local, lindblad2002, lindblad2005well,  lindblad2018priori, luo2017motion, nalimov1974cauchy,  SchweizerFreeEuler, shatah2008geometry, shatah2008priori, shatah2011local, wu1997LWPww, wu1999LWPww, zhang2008free} for more details. 

On the other hand, the MHD equations describe the behavior of an electrically conducting fluid (e.g., a plasma) acted on by a magnetic field.
In particular, the free-boundary MHD equations (also known as the plasma-interface problem) describes the phenomenon when the plasma is separated from the outside wall by a vacuum, whose motion can be formulated as the  incompressible free-boundary MHD equations. 

Although the MHD equations in a fixed domain have been the focus of a great deal of activities, e.g., \cite{ cao2010two, chen2008existence, friedlander2011global,  friedlander1990nonlinear,  friedlander1995stability,  jiang2010incompressible,  wu2011global}, much less is known for the free-boundary case. The main difficulty is the strong coupling between $u$ and $B$ (i.e., the appearance of $B\cdot \nab B$ and $B\cdot \nab u$ terms in the first and second equations of \eqref{MHD}, respectively). In fact, the appearance of the Lorentzian force term $B\cdot \nab B$ destories the Cauchy invariance, which provides good estimates for $\curl v$ when $B$ is absent; indeed, one can see this by commuting the curl operator through the first equation of \eqref{MHD}, which implies\footnote{We refer \eqref{cauchy invariance fail1}-\eqref{cauchy invariance fail 2} for the detailed computation.}
$$
(\p_t+\nab_u) \curl u \sim \nab (\curl B).
$$
   Nevertheless, it is remarkable that the magnetic field $B$ yields certain regularizing effect (cf. \cite{wangyanjin2012}), which can be derived from the transport equation of $B$ (i.e., the second equation of \eqref{MHD}). Such regularizing effect plays an important role to control the full Sobolev norms of $\curl B$ and $\curl u$ and hence the full Sobolev norm of $B$ and $u$ via the div-curl estimate. We will provide more details on this  in Section \ref{section 1.3}.

For the free-boundary MHD equations, the local (in time) well-posedness (LWP) of the linearized equations was studied by Morando-Trakhinin-Trebeschi \cite{morando2014well}, Secchi-Trakhinin \cite{secchi2011well} and Trakhinin \cite{trakhinin2010well}. For the nonlinear equations, Hao-Luo \cite{hao2014priori} proved the a priori energy estimate with $H^4$ initial data and the LWP was established by Secchi-Trakhinin \cite{secchi2013well} and Gu-Wang \cite{gu2016construction}. Also, we mention here that in Hao \cite{hao2016motion} and Sun-Wang-Zhang \cite{sun2017well}, the authors studied the a priori energy estimate and LWP, respectively, for the free-boundary MHD equations with nontrivial vacuum magnetic field. 

In this manuscript, we establish the local a priori energy estimate with $u, B\in H^{2.5+\delta}$ with $\delta>0$ is arbitrary. This agrees with the minimal regularity assumption (i.e., $H^{\frac{d}{2}+1+\delta}$, where $d$ is the spatial dimension) that one may expect for the velocity field in the theory of the free-boundary incompressible Euler equations (see, e.g.,  \cite{DKT, kukavica2014regularity, kukavica2017local}). In fact,  Bourgain-Li \cite{bourgain2015strong} proved that the incompressible Euler equations with $H^{\frac{d}{2}+1}$ initial data are ill-posed even in the free space $\R^d$.

\subsection{MHD system in Lagrangian coordinates and Main result}
We reformulate the MHD equations in Lagrangian coordinates, in which  the free domain becomes fixed. Let $\Omega$ be a bounded domain in $\R^3$. Denoting coordinates on $\Omega$ by $y=(y_1,y_2,y_3)$, we define $\eta: [0,T]\times \Omega\to\DD$ to be the flow map of the velocity $u$, i.e., 
\begin{equation}
\p_t \eta (t,y)=u(t,\eta(t,y)),\q
\eta(0,y)=y.
\end{equation}
We introduce the Lagrangian velocity, magnetic field and fluid pressure, respectively, by
\begin{equation}
v(t,y)=u(t,\eta(t,y)),\q
b(t,y)=B(t,\eta(t,y)),\q
q(t,y)=p(t,\eta(t,y)).
\end{equation}
Let $\p$ be the spatial derivative with respect to $y$ variable. We introduce the cofactor matrix $a=[\p\eta]^{-1}$, which is well-defined since $\eta(t,\cdot)$ is almost the identity map when $t$ is sufficiently small. It's worth noting that $a$ verifies the Piola's identity, i.e., 
\begin{equation}
\p_\mu a^{\mu\alpha} = 0.
\label{piola}
\end{equation}
Here, the summation convention is used for repeated upper and lower indices, and in above and throughout, all indices (e.g., Greek and Latin) range over $1,2,3$.

Denote the total pressure $p_{\text{total}}=p+\frac{1}{2}|B|^2$ and let $Q=p_{\text{total}}(t,\eta(t,y))$. Then \eqref{MHD}-\eqref{MHDB} can be reformulated as: 
\begin{equation}
\begin{cases}
\partial_tv_{\alpha}-b_{\beta}a^{\mu\beta}\partial_{\mu}b_{\alpha}+a^{\mu}_{\alpha}\partial_{\mu}Q=0~~~& \text{in}~[0,T]\times\Omega;\\
\partial_t b_{\alpha}-b_{\beta}a^{\mu\beta}\partial_{\mu}v_{\alpha}=0~~~&\text{in}~[0,T]\times \Omega ;\\
a^{\mu}_{\alpha}\partial_{\mu}v^{\alpha}=0,~~a^{\mu}_{\alpha}\partial_{\mu}b^{\alpha}=0~~~&\text{in}~[0,T]\times\Omega;\\
v_3=0~~~&\text{on}~\Gamma_0;\\
a^{\mu\nu}b_{\mu}b_{\nu}=c^2,~Q=\frac{1}{2}c^2,~ a^{\mu}_{\nu}b^{\nu}N_{\mu}=0 ~~~&\text{on}~\Gamma_;\\
\frac{\partial Q}{\partial N}=\partial_3 Q\leq -\epsilon_0~~~&\text{on}~\Gamma_1.
\end{cases}\label{MHDL}
\end{equation}
\rmk 
In above and throughout, the upper index of $a$ represents the number of the rows whereas the lower index represents the number of the columns, i.e., $a^{\text{row}}_{\text{column}}$. 
\\

For the sake of simplicity and clean notation, here we consider the model case when \begin{equation}
\Omega=\T^2\times (0,\beq),
\label{Omega}
\end{equation}
 where $\beq \ll 1$ and $ \partial\Omega=\Gamma_0\cup\Gamma_1$ and $\Gamma_1=\T^2\times\{\beq\}$ is the top (moving) boundary, $\Gamma_0=\T^2\times\{0\}$ is the fixed bottom. We shall treat the general bounded domains with small volume in Section 6 by adapting what has been done in \cite{DKT}. However, choosing $\Omega$ as above allows us to focus on the real issues of the problem without being distracted by the cumbersomeness of the partition of unity. Let $N$ stands for the outward unit normal of $\p\Omega$. In particular, we have $N=(0,0,-1)$ on $\Gamma_0$ and $N=(0,0,1)$ on $\Gamma_1$. 

In this paper, we prove: 

\begin{thm}\label{MHDthm}
Let $\Omega$ be defined as in \eqref{Omega}. Let $(\eta, v, b)$ be the solution of \eqref{MHDL} and $\delta\in(0,0.5)$. Assume that $v(0,\cdot)=v_0\in H^{2.5+\delta}(\Omega)$ and $b(0,\cdot)=b_0\in H^{2.5+\delta}(\Omega)$ be divergence free vector fields and $b_0\cdot N=0$ on $\p\Omega$. Let
\begin{equation}\label{N(t)}
N(t): =\|\eta(t)\|_{H^{3+\delta}}^2+\|v(t)\|_{H^{2.5+\delta}}^2+\|b(t)\|_{H^{2.5+\delta}}^2.
\end{equation}
Then for sufficiently small $\beq$, there exists a $T>0$, depending only on $N(0)$ and $\beq$ such that $N(t)\leq P(N(0))$ for all $t\in [0,T]$,  provided the physical sign condition
\begin{equation}
-\frac{\p Q}{\p N}\big|_{t=0} = -\p_3 Q \big|_{t=0} \geq \epsilon_0>0,\q\text{on}\,\,\Gamma_1
\end{equation}
holds. Here, $P$ is a polynomial of its arguments. 
\end{thm}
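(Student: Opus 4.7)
The plan is to derive a polynomial differential inequality $\frac{d}{dt}E(t)\leq P(E(t))$ for a higher-order energy $E(t)$ equivalent to $N(t)$, and to close the estimate by Gr\"onwall on a short time interval whose length depends only on $N(0)$ and $\beq$. The argument follows the standard template for free-boundary problems: a tangential energy estimate at top regularity $2.5+\delta$, a div-curl decomposition to recover full Sobolev norms from tangential control, an elliptic estimate for the total pressure $Q$, and a transport estimate for the flow map $\eta$. The genuinely new features enter at two points: the vorticity analysis, where the Lorentz force destroys the Cauchy invariance; and the recovery of the half-derivative gap between $\eta\in H^{3+\delta}$ and $v\in H^{2.5+\delta}$, which is where the smallness of $\beq$ must be used.

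For the tangential estimate I would apply the fractional tangential operator $\cp^{2.5+\delta}$ to the momentum and induction equations of \eqref{MHDL}, pair them with $\cp^{2.5+\delta}v$ and $\cp^{2.5+\delta}b$ respectively, and integrate over $\Omega$. The pressure term produces, after integration by parts using the Piola identity \eqref{piola}, a boundary contribution on $\Gamma_1$ whose leading piece, thanks to $-\p_3 Q\geq\epsilon_0$, is
\[
\frac{1}{2}\frac{d}{dt}\int_{\Gamma_1}(-\p_3 Q)\,|\cp^{2.5+\delta}\eta\cdot N|^2\,dS,
\]
furnishing the positive-definite boundary regularization responsible for the extra half-derivative of $\eta$ relative to $v$. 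The magnetic cross-terms $b_\beta a^{\mu\beta}\p_\mu b_\alpha$ and $b_\beta a^{\mu\beta}\p_\mu v_\alpha$ in the two equations combine, after using $a^\mu_\nu b^\nu N_\mu=0$ on $\Gamma_1$ and the divergence-free constraints, into an exact time derivative up to commutators. Summing the two identities therefore produces the required cancellation of the top-order Lorentz/induction cross terms, and the remaining commutators between $\cp^{2.5+\delta}$ and the coefficients $a,b$ are bounded by Kato--Ponce-type estimates, giving terms polynomial in $N(t)$.

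To upgrade the tangential control to full $H^{2.5+\delta}$ control of $v$ and $b$, I would invoke the Lagrangian div-curl decomposition: the constraints $a^\mu_\alpha\p_\mu v^\alpha=a^\mu_\alpha\p_\mu b^\alpha=0$ reduce matters to bounding the Lagrangian curls of $v$ and $b$ in $H^{1.5+\delta}(\Omega)$. Commuting $\curl$ through the momentum equation yields a transport-type equation of the schematic form $(\p_t+v\cdot\nab)\curl v\sim\nab(\curl b)$ plus lower-order terms, which loses a derivative and reflects the failure of Cauchy invariance in MHD. The saving fact is the induction equation: differentiating it gives $(\p_t+v\cdot\nab)\curl b\sim(b\cdot\nab)\curl v$ plus lower-order terms, so that the pair $(\curl v,\curl b)$ satisfies a symmetric first-order system whose top-order terms cancel after pairing in $L^2$. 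Together with the algebraic identity $b_\alpha(t,y)=b_0^\beta(y)\p_\beta\eta_\alpha(t,y)$, which is derived from the induction equation and transfers spatial regularity from $\eta$ to $b$, this closes the vorticity estimate at the required regularity.

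The total pressure $Q$ satisfies an elliptic equation of the form $\lap_a Q=\p_\mu(a^\mu_\alpha b^\beta a^{\gamma\beta}\p_\gamma b^\alpha)-\p_\mu(a^\mu_\alpha v^\beta a^\gamma_\beta\p_\gamma v^\alpha)$ with $Q=\frac{1}{2}c^2$ on $\Gamma_1$ and a compatible condition on $\Gamma_0$, so standard elliptic regularity gives $\|Q\|_{H^{2+\delta}}\leq P(N(t))$. For the flow map, integrating $\p_t\eta=v$ yields $\|\eta(t)\|_{H^{3+\delta}}\leq\|\eta(0)\|_{H^{3+\delta}}+\int_0^t\|v\|_{H^{3+\delta}}\,ds$; the tangential and boundary traces of $\|v\|_{H^{3+\delta}}$ are already supplied by the tangential energy above, but the normal derivative $\|\p_3 v\|_{H^{2+\delta}}$ exceeds the $H^{2.5+\delta}$ control of $v$. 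The Poincar\'e inequality on $\Omega=\T^2\times(0,\beq)$ gains a factor of $\beq$, which is then absorbed by choosing $\beq$ sufficiently small. The hardest step is the coupled vorticity estimate: one must correctly identify the cancellation in the $(\curl v,\curl b)$ system and track the fractional commutators with $a^{\mu\alpha}$ and $b_\beta$ within the budget of $H^{2.5+\delta}$ regularity. Once this is in place, the differential inequality closes and Theorem~\ref{MHDthm} follows by Gr\"onwall.
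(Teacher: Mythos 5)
Your outline has the right skeleton (tangential energy, div--curl recovery, elliptic pressure estimate, Gr\"onwall), and you correctly flag the failure of Cauchy invariance and the role of the identity $b_\alpha=b_0^\beta\p_\beta\eta_\alpha$. But two steps as you describe them would not close.

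\emph{The flow map estimate and the role of $\bar\epsilon$.} You propose to write $\|\eta(t)\|_{H^{3+\delta}}\leq\|\eta(0)\|_{H^{3+\delta}}+\int_0^t\|v\|_{H^{3+\delta}}$ and recover the missing $\|\p_3 v\|_{H^{2+\delta}}$ via a Poincar\'e inequality on the thin slab. This does not work: $v$ is only in $H^{2.5+\delta}$, and Poincar\'e trades a factor of $\bar\epsilon$ for an \emph{additional} normal derivative ($\|f\|_{L^2}\lesssim\bar\epsilon\|\p_3 f\|_{L^2}$), so it cannot manufacture the missing half derivative of $v$. What the paper does instead is a div--curl estimate for $\eta$ directly, $\|\eta\|_{H^{3+\delta}}\lesssim\|\curl\eta\|_{H^{2+\delta}}+\|\dive\eta\|_{H^{2+\delta}}+\|(\cp\eta)\cdot N\|_{H^{1.5+\delta}(\Gamma_1)}+\text{low}$, with the boundary piece supplied by the tangential energy and the Taylor sign condition. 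The curl piece is where $\bar\epsilon$ enters: there is no Cauchy invariance to feed $\curl\eta$, so one bounds $\|B_a\p\eta\|_{H^{1+\delta}}\lesssim\|a\|_{H^{1.5+\delta}}\|\eta\|_{H^{3+\delta}}$ by the product rule and Young's inequality with parameter $\epsilon$, producing $\epsilon^{-1}\|\eta\|_{H^{2.5+\delta}}^4+\epsilon\|\eta\|_{H^{3+\delta}}^2$. The $\epsilon^{-1}$ factor is tamed because $\|\eta(0)\|_{H^{2.5+\delta}}=\|\mathrm{id}\|_{H^{2.5+\delta}(\Omega)}\sim\sqrt{\bar\epsilon}$ on the small domain; the smallness of $\bar\epsilon$ is a smallness of \emph{initial data} in the final Gr\"onwall inequality, not a Poincar\'e-type gain.

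\emph{Pressure regularity.} You claim $\|Q\|_{H^{2+\delta}}\leq P(N)$ suffices, but the tangential estimate needs $\|Q\|_{H^{3+\delta}}$ (to absorb, e.g., commutator terms where $S$ falls on $a$ and is paired with $\p Q$), and, after moving a time derivative off $S\eta$ in the boundary integral that carries the Taylor sign, it also needs $\|Q_t\|_{H^{2.5+\delta}}$. Your proposal is short by a full derivative on $Q$ and says nothing about $Q_t$. Achieving $Q_t\in H^{2.5+\delta}$ with only $\eta\in H^{3+\delta}$ is itself nontrivial: naively the source of the $Q_t$ elliptic problem contains $b_0^\mu\p_\mu a^{\nu\alpha}\p_\nu\p_t b_\alpha$, whose control requires $\eta\in H^{3.5+\delta}$; the paper avoids this by substituting $b_\beta a^{\mu\beta}=b_0^\mu$ before differentiating, so that no time derivative lands on $a$ at top order. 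This is another place where the regularizing identities do active work rather than serving as a bookkeeping convenience.

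On the vorticity estimate, your ``symmetric first-order system for $(\curl v,\curl b)$'' is in the right spirit, but the paper's route is cleaner and arguably more robust: rewrite the Lorentz force as $(b_0\cdot\p)^2\eta$, apply $B_a$ (which kills the pressure gradient exactly), and commute $b_0\cdot\p$ through $B_a$; the cancellation is then a single integration by parts in the $b_0$ direction using $\dive b_0=0$ and $b_0\cdot N=0$, with $b_0$ \emph{time independent}, so no $a_t$ terms appear at top order. Your version, stated with the moving coefficient $b_\beta a^{\mu\beta}$, would require tracking those time-dependent commutators separately.
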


\rmk We will show that the physical sign condition \eqref{taylor0} propagates within $[0,T]$. In other words, it holds
\begin{equation}\label{taylor0}
-\p_3 Q(t) \geq \epsilon_0>0,\q\text{on}\,\,\Gamma_1,\q t\in [0,T].
\end{equation}

\subsection{Strategy, organisation of the paper, and discussion of the difficulties}\label{section 1.3}
\paragraph*{Notations.} All definitions and notations will be defined as they are introduced. In addition, a list of symbols will be given at the end of this section for a quick reference.
\defn The $L^2$-  based Sobolev spaces are denoted by $H^s(\Omega)$, where
we abbreviate corresponding norm $\|\cdot\|_{H^r(\Omega)}$ as $\|\cdot\|_{H^r}$ when no confusion can arise. We denote by $H^s(\Gamma)$ the Sobolev space of functions defined on $\Gamma$, with norm $\|\cdot\|_{H^{s}(\Gamma)}$. 

\nota We use $\epsilon$ to denote a small positive constant which may vary from expression to expression. Typically, $\epsilon$ comes from choosing sufficiently small time, from Lemma \ref{estimatesofa} and from the Young's inequality. 
\nota We use $P=P(\cdots)$ to denote a generic polynomial in its arguments.\\

Now we can state the strategies we used and discuss the discovery and the difficulty in MHD system.

\noindent\textbf{Gronwall-Type argument and div-curl estimates}

The proof of Theorem \ref{MHDthm} relies on div-curl type estimates of the velocity field $v$, the magnetic field $b$ and the Lagrangian flow map $\eta$.  In particular, let $N(t)$ be defined as in Theorem \ref{MHDthm}. Then if $\vol(\Omega)$ is sufficiently small (i.e., $\beq\ll 1$), there exists a $T>0$ such that the estimate
\begin{equation}
N(t)\lesssim M_0+ \epsilon P(N(t))+P(N(t))\int_0^t P(N(s))ds
\label{intro close}
\end{equation}
holds whenever $t\in [0,T]$, where $M_0=M_0(\|v_0\|_{H^{2.5+\delta}}, \|b_0\|_{H^{2.5+\delta}})$ .  This implies $N(t)\lesssim M_0$ by a Gronwall-type argument that can be found in Chapter 1 of Tao \cite{tao2006nonlinear}.\\

\noindent\textbf{Creation of vorticity by the magnetic field}

The vorticity of the conducting fluid cannot be controlled analogously to that in the case of a non-conducting fluid due to the lack of the Cauchy invariance, since its derivation involves  the derivative of the Lorentzian force $(b_0\cdot\p)b$, which contributes to higher order terms. In particular, let $\epsilon^{\mu\nu\tau}$ be the anti-symmetric tensor with $\epsilon^{123}=1$. We have: 
\begin{equation}
\begin{aligned}
\p_t(\epsilon^{\mu\nu\tau}\p_\nu v^m\p_\tau \eta_m)= \underbrace{\epsilon^{\mu\nu\tau}\p_\nu v^m\p_\tau v_m}_{=0}+\epsilon^{\mu\nu\tau}\p_\nu v_t^m\p_\tau \eta_m\\
=\underbrace{-\epsilon^{\mu\nu\tau}\p_\nu (a^{\ell m} \p_\ell Q)\p_\tau \eta_m}_{=0,\,\, \text{same as the Euler's equations}}+\epsilon^{\mu\nu\tau}\p_\nu (b_0^\sigma \p_\sigma b^m)\p_\tau \eta_m, 
\end{aligned}\label{cauchy invariance fail1}
\end{equation}
where the last term in the second line is equal to
\begin{align}
\epsilon^{\mu\nu\tau}\p_\nu (b_0^\sigma \p_\sigma b^m)\delta^\tau_m+\epsilon^{\mu\nu\tau}\p_\nu (b_0^\sigma \p_\sigma b^m)(\p_\tau \eta_m-\delta^\tau_m)\nonumber\\
=\curl (b_0^\sigma\p_\sigma b)+\epsilon^{\mu\nu\tau}\p_\nu (b_0^\sigma \p_\sigma b^m)(\p_\tau \eta_m-\delta^\tau_m),
\label{cauchy invariance fail 2}
\end{align}
is nonzero in general. We remark here that it is the Lorentzian force that causes the strong coupling between $v$ and $b$. One can imagine that the Lorentzian force twists the trajectory of an electric particle in a magnetic field and produces vorticity even if the initial data is curl-free. However, we can control $\curl v$ and $\curl b$ from their evolution equation derived by taking the Eulerian curl operator to the first equation of \eqref{MHDL}. This will be dicussed in the following paragraph. \\

\noindent\textbf{Regularizing effect of $b$:  Controlling $\curl v,$ $\curl b$ and pressure $Q$}

The key to control $\|v\|_{H^{2.5+\delta}}$ and $\|b\|_{H^{2.5+\delta}}$ is to control $\|B_a v\|_{H^{1.5+\delta}}$ and $\|B_a b\|_{H^{1.5+\delta}}$, where $B_a$ denotes the Eulerian curl operator, i.e., $[B_a X]_\lambda=\epsilon_{\lambda\tau\alpha}a^{\mu\tau}\p_{\mu}X^{\alpha}$, where $\epsilon_{\lambda\tau\alpha}$ is the anti-symmetric tensor with $\epsilon_{123}=1$. These quantities are treated  straightforwardly for non-conducting fluids (i.e., Euler equations) thanks to the remarkable Cauchy invariance. We, nevertheless, have to control them differently since the Cauchy invariance fails for MHD equations due to the presence of the Lorentzian force term $b_\beta a^{\mu\beta} \p_\mu b$.  Inspired by Gu-Wang \cite{gu2016construction}, one can derive the evolution equation for $B_a v$ and $B_a b$. With the help of the following identities\footnote{We refer Lemma \ref{GW} for the detailed derivation.}
\begin{equation}
b_{\beta}a^{\mu\beta}=b_0^{\mu}\q \text{and}\q b_{\mu}=(b_0\cdot\p)\eta_\mu  \label{stabilizing 1}
\end{equation} mentioned in Gu-Wang \cite{gu2016construction}, one can rewrite the first equation of \eqref{MHDL} as
\begin{equation}
\p_t v_\alpha + a^{\mu}_{\alpha} \p_\mu Q = (b_0\cdot \p)^2 \eta_\alpha.
\label{re-write}
\end{equation}

Now, one may apply the curl operator $B_a$ on both sides of \eqref{re-write} and get: 
\begin{equation}
(B_a \p_tv)_{\lambda}=(B_a ((b_0\cdot\p)^2\eta))_{\lambda},
\label{curl_e}
\end{equation} 
which yields an evolution equation after commuting $\p_t$ and $b_0\cdot\p$  on both sides of \eqref{curl_e}: 
\begin{equation}
\p_t(B_a v)_{\lambda}-(b_0\cdot\p)B_a ((b_0\cdot\p)\eta)_{\lambda}=\text{error terms + commutators}.
\end{equation}
This, in particular, yields an energy identity for $B_a v$ and $B_a b=B_a (b_0\cdot \p)\eta$, i.e., 
\begin{equation}
E_{\curl}(t): =\frac{1}{2}\int_{\Omega}|\p^{1.5+\delta} B_a v|^2+|\p^{1.5+\delta} B_a (b_0\cdot\p)\eta|^2,
\end{equation}
and it can be shown that $\mathcal{E}(t)$ verifies the following estimates by using Kato-Ponce inequalities \eqref{KATO}
\begin{equation}
E_{\curl}(t)\leq \|b_0\|_{H^{2.5+\delta}}+ \int_0^t P(\|\eta\|_{H^{2.5+\delta}}, \|v\|_{H^{2.5+\delta}} , \|b\|_{H^{2.5+\delta}} ). 
\end{equation}



On the other hand, it is worth pointing out that the control of $\|Q\|_{H^{3+\delta}}$ and $\|\p_3Q_t\|_{L^\infty(\p\Omega)}$ (and hence $\|Q_t\|_{H^{2.5+\delta}}$) are both required. These quantities are needed even for the incompressible free-boundary Euler equations, whose a priori energy estimate can be closed by requiring $\eta$ to be half derivatives more regular than $v$ (see, e.g., \cite{alazard2014cauchy, kukavica2014regularity, kukavica2017local}). In the case of a conducting fluid, i.e., MHD equations, we have to use the regularizing effect of the magnetic field (i.e., identities \eqref{stabilizing 1}) to show that $\|\eta\|_{H^{3+\delta}}$ is still good enough to control $\|Q\|_{H^{3+\delta}}$ and $\|Q_t\|_{H^{2.5+\delta}}$. In particular, $Q_t$ satisfies an elliptic equation that involves $b_0^\mu \p_\mu a^{\nu\alpha}\p_\nu\p_t b_\alpha$ as part of its source term, whose $H^{0.5+\delta}$ norm requires $\eta\in H^{3.5+\delta}$ to control. However, this term can be avoided by invoking the identities \eqref{stabilizing 1} when deriving the elliptic PDE of $Q_t$.
\rmk One may drop the requirement for $\|\eta\|_{H^{s+0.5}}$ when $s>3.5$ using Alinhac's good unknowns thanks to the fact that $\p a\in L^\infty$. We refer \cite{gu2016construction, hao2014priori} for details.
\\


\noindent\textbf{Smallness of the fluid's volume is required:  Nonlinear control of $\curl\eta$}

One needs to control $\|\curl \eta\|_{H^{2+\delta}}$ (and hence $\|B_a\eta\|_{H^{2+\delta}}$) to close the a priori estimate. This can be done in the case of a non-conducting fluid using the Cauchy invariance if one assumes $\omega_0=\curl v_0 \in H^{2+\delta}$ (cf. \cite{kukavica2017local}). This, again, fails for MHD equations. 
In order to control $B_a\p\eta$, one can only hope to use the multiplicative Sobolev inequality and Young's inequality with $\epsilon$ to derive the nonlinear estimate, which produces a term $\epsilon^{-1}P(\|\eta(0)\|_{H^{2.5+\delta}})$. Therefore, we  require  the body of the conducting fluid to have small volume to fight the growth of the vorticity brought by twisting effect of the Lorentzian force (in other words, the strong coupling between $b$ and $v$), otherwise the Gronwall-type argument no longer holds since it requires $\epsilon$ to be sufficiently small. The smallness of the fluid body can be propagated\footnote{One may also choose to add an articifical smoothness conditions for $\eta$ (e.g., $\eta\in H^{3+\delta}(\Omega)$). But such conditions do not seem to be the ones that can be propagated.} if it holds initially since $\eta$ is volume-preseving. 
\\

\noindent\textbf{Organization of the paper: }

The manuscript will be organized as follows. In Section \ref{section 2} we record the preliminary estimates for the cofactor matrix $a$ and its time derivatives. Also, the well-known Kato-Ponce commutator estimates are summarized  as Lemma \ref{KatoPonce} for readers' convenience.  Section \ref{section 3} is devoted to control $\|Q\|_{H^{3+\delta}}$ and $\|Q_t\|_{H^{2.5+\delta}}$, which is required for the tangential estimate of $v$. 
In Section \ref{section 4} we prove the tangential estimates for both $v$ and $b$. Finally, in Section \ref{section 5}, we provide the control for the full Sobolev norms of $v$, $b$ and $\eta$ using a div-curl type estimate. Also, we show that the physical sign condition \eqref{taylor0} propagates within a short period by showing that the quantity $\p_3 Q|_{\Gamma_1}$ is $1/4$-H\"older continuous in time, which allows us to close the a priori estimates.

\paragraph*{Acknowledgement: } We would like to thank Marcelo Disconzi and Igor Kukavica for many insightful discussions. Also, we thank Igor for sharing his idea on the proof of Lemma \ref{Igor}.

\paragraph*{List of symbols: }
\begin{itemize}
\item $\epsilon$:   A small positive constant which may vary from expression to expression.
\item $\beq$:  The ``height" of the fluid domain $\Omega$, which is also chosen to be sufficiently small.
\item $a=[\p \eta]^{-1}$:  The cofactor matrix;
\item $\|\cdot\|_{H^s}$:  We denote $\|f\|_{H^s}: = \|f(t,\cdot)\|_{H^s(\Omega)}$ for any function $f(t,y)\text{ on }[0,T]\times\Omega$.
\item $P$:  A generic polynomial in its arguments;
\item $\PP$:  $\PP=P(\|v\|_{H^{2.5+\delta}}, \|b\|_{H^{2.5+\delta}})$ (and so $\PP_0=P(\|v_0\|_{H^{2.5+\delta}}, \|b_0\|_{H^{2.5+\delta}})$;
\item $N(t)$:  $N(t)=\|\eta\|_{H^{3+\delta}}^2+\|v\|_{H^{2.5+\delta}}^2+\|b\|_{H^{2.5+\delta}}^2$;
\item $\cp = (I-\overline{\Delta})^{1/2}$ where $\overline{\Delta}=\p_1^2+\p_2^2$, and $S=\cp^{2.5+\delta}$:  Tangential differential operators. 
\end{itemize}
\begin{flushright}
$\square$
\end{flushright}

\section{Preliminary Lemmas}
\label{section 2}

The first lemma is about some basic estimate of the cofactor matrix $a$, which shall be used throughout the rest of the manuscript. 

\begin{lem}\label{estimatesofa}
Suppose $\|\p v\|_{L^{\infty}([0,T];H^{1.5+\delta}(\Omega))}\leq M$. If $T\leq \frac{1}{CM}$ for a sufficiently large constant $K$, then the following estimates hold: 

(1) $\|\p\eta\|_{H^{1.5+\delta}(\Omega)}\leq C$ for $t\in [0,T]$;

(2) $\det(\p\eta(t,x))=1$ for $(x,t)\in\Omega\times [0,T]$;

(3) $\|a(\cdot,t)\|_{H^{1.5+\delta}(\Omega)}\leq C$ for $t\in [0,T]$;

(4) $\|a_t(\cdot,t)\|_{L^p(\Omega)}\leq C\|\p v\|_{L^p(\Omega)}$ for $t\in [0,T]$, $1\leq p\leq\infty$;

(5) $\|a_t(\cdot,t)\|_{H^r(\Omega)}\leq C\|\p v\|_{H^r(\Omega)}$ for $t\in [0,T]$, $0\leq r\leq 1.5+\delta$;

(6) $\|a_{tt}(\cdot,t)\|_{H^r(\Omega)}\leq C\|\p v\|_{H^{1.5+\delta}}\|\p v\|_{H^r}+C\|\p v_t\|_{H^r}$, for $t\in [0,T]$, $0< r\leq 0.5+\delta$;

(7) For every $0<\epsilon\leq 1$, there exists a constant $C>0$ such that for all $0\leq t\leq T': =\min\{\frac{\epsilon}{CM},T\}>0$, we have $$\|a^{\mu}_{\nu}-\delta^{\mu}_{\nu}\|_{H^{1.5+\delta}(\Omega)}\leq\epsilon, ~~\|a^{\mu}_{\alpha}a^{\nu}_{\alpha}-\delta^{\mu\nu}\|_{H^{1.5+\delta}(\Omega)}\leq \epsilon.$$

(8) $\p_m a_{\alpha}^\mu=-a^{\mu}_{\nu}\p_{\beta}\p_m \eta^\nu a^{\beta}_{\alpha}$ for $m=1,2,3.$
\end{lem}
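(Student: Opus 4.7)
The strategy rests on two basic ingredients. First, the fundamental theorem of calculus gives $\partial\eta(t,y)=I+\int_0^t\partial v(s,y)\,ds$, since $\partial_t\eta=v$ and $\eta(0,y)=y$. Second, in three spatial dimensions $H^{1.5+\delta}(\Omega)$ is a Banach algebra (because $1.5+\delta>3/2$) and embeds into $L^\infty$. From the integral identity and the hypothesis $\|\partial v\|_{L^\infty_t H^{1.5+\delta}}\leq M$, one immediately obtains $\|\partial\eta-I\|_{H^{1.5+\delta}}\leq tM$, which proves (1) for $t\leq 1/(CM)$ and, after shrinking $T'\leq \epsilon/(CM)$, the first estimate in (7). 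For (2) I would use $\partial_t\det(\partial\eta)=\det(\partial\eta)\,(a^\mu_\alpha\partial_\mu v^\alpha)$; the incompressibility constraint $a^\mu_\alpha\partial_\mu v^\alpha=0$ from \eqref{MHDL} forces $\det(\partial\eta)$ to be time-independent, and $\det(\partial\eta(0,\cdot))=1$.

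Part (8) is a pointwise algebraic identity: differentiate $a^\mu_\alpha\,\partial_\beta\eta^\alpha=\delta^\mu_\beta$ in $y^m$ and solve for $\partial_m a^\mu_\alpha$. In view of (2), the cofactor matrix $a$ coincides with the classical adjugate of $\partial\eta$ and is therefore a polynomial of degree two in the entries of $\partial\eta$; hence (3) is immediate from (1) and the algebra property. The second estimate of (7) then follows from the first via the decomposition $aa^T-I=(a-I)a^T+(a^T-I)$ together with the algebra bound on the product.

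For the time-differentiated estimates I would differentiate $a^\mu_\alpha\,\partial_\beta\eta^\alpha=\delta^\mu_\beta$ in $t$ to obtain $\partial_t a^\mu_\alpha=-a^\mu_\nu\,\partial_\beta v^\nu\,a^\beta_\alpha$. Combining this identity with the $L^\infty$ bound on $a$ from (3) and H\"older's inequality yields (4) directly. For (5), the same formula is estimated in $H^r$ via the Kato-Ponce product inequality (Lemma \ref{KatoPonce}), with the two $a$-factors absorbed using the fact that $H^{1.5+\delta}$ acts by multiplication on $H^r$ for $0\leq r\leq 1.5+\delta$. For (6), one further differentiation in time gives, schematically, $a_{tt}\sim a(\partial v_t)a+a(\partial v)a(\partial v)a$; applying Kato-Ponce in $H^r$ for $0<r\leq 0.5+\delta$, placing the high derivative on either $\partial v_t$ or on a single copy of $\partial v$ and absorbing the remaining factors via $H^{1.5+\delta}\hookrightarrow L^\infty$, delivers precisely the right-hand side of (6).

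The only delicate point is the bookkeeping of Kato-Ponce exponents in (5) and (6): one must concentrate the ``high'' derivative on a single factor and dump the rest into $H^{1.5+\delta}$, and this is exactly what pins down the stated ranges of $r$. Apart from the volume-preservation identity (2), which is the sole non-calculational input, the entire lemma is a short exercise in the nonlinear Sobolev calculus of inverse matrices.
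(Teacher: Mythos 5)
The paper's ``proof'' of this lemma is a one-line citation to Kukavica--Tuffaha (Lemma~3.1 and formula~(6.6) of \cite{kukavica2014regularity}), so there is no in-paper argument to compare against; your proposal supplies the underlying argument, and it is essentially correct and follows the standard route. The skeleton is right: $\partial\eta=I+\int_0^t\partial v$, the algebra property and $L^\infty$ embedding of $H^{1.5+\delta}$ in three dimensions, the Jacobi formula $\partial_t\det(\partial\eta)=\det(\partial\eta)\,a^\mu_\alpha\partial_\mu v^\alpha$ with the Lagrangian incompressibility constraint for (2), the adjugate representation $a=\mathrm{adj}(\partial\eta)$ for (3), differentiating $a\,\partial\eta=I$ in space and time to get (8) and the formula $a_t=-a(\partial v)a$, and Kato--Ponce bookkeeping for (4)--(6).

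One small point of rigor worth noting: from $\|\partial\eta-I\|_{H^{1.5+\delta}}\leq tM\leq\epsilon$ one does not ``immediately'' get the first estimate in (7). You need the intermediate identity $a-I=a(I-\partial\eta)$ (or a Neumann-series argument from the adjugate), together with the algebra bound and the $a$-bound from (3), to conclude $\|a-I\|_{H^{1.5+\delta}}\lesssim\epsilon$. Since (3) is derived independently of (7), the logic closes, but the dependency should be made explicit rather than claimed as automatic. With that caveat, the rest of the bookkeeping in (4)--(6) — concentrating one factor of $\partial v$ (or $\partial v_t$) at the $H^r$ level and dumping the remaining factors into $H^{1.5+\delta}\hookrightarrow L^\infty$ — is exactly what pins down the stated ranges of $r$, and you have identified this correctly.
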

\begin{proof}
See \cite{kukavica2014regularity}:  (1)-(7) is Lemma 3.1 and (8) is formula (6.6).
\end{proof}

The next lemma reveals the regularizing  effect of the magnetic field $b$; in particular,  the flow map $\eta$ is more regular in the direction of $b_0$. It was also used in Wang \cite{wangyanjin2012} and Gu-Wang \cite{gu2016construction}

\begin{lem}\label{GW}
Let $(v,b,\eta)$ be a solution to \eqref{MHDL} with initial data $(v_0,b_0,\eta_0)$. Then the following two identities hold: 
\begin{equation}\label{GW1}
a^{\nu\alpha} b_{\alpha}=b_0^{\nu},
\end{equation} 
\begin{equation}\label{GW2}
b^{\beta}=(b_0\cdot\p)\eta^{\beta}=b_0^{\nu}\p_{\nu}\eta^{\beta}.
\end{equation} 
\end{lem}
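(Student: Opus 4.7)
The plan is to prove \eqref{GW1} by showing that the quantity $w^\nu := a^{\nu\alpha} b_\alpha$ is independent of $t$, and then to derive \eqref{GW2} by contracting \eqref{GW1} with $\partial_\nu\eta^\beta$ and using that $a$ is the inverse of $\partial\eta$.

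First, I would differentiate the defining relation $a^{\mu}_{\alpha}\partial_\mu\eta^\beta = \delta^\beta_\alpha$ for $a = [\partial\eta]^{-1}$ in $t$. Using $\partial_t\eta = v$ and contracting with another factor of $a$ to solve for $\partial_t a$, one obtains the standard formula
\begin{equation*}
\partial_t a^{\nu\alpha} \;=\; -\,a^{\nu\beta}(\partial_\mu v_\beta)\, a^{\mu\alpha}.
\end{equation*}
Substituting this together with the induction equation $\partial_t b_\alpha = b_\beta\, a^{\mu\beta}\partial_\mu v_\alpha$ from the second line of \eqref{MHDL} into the product rule for $\partial_t w^\nu$ produces two terms which, after renaming the dummy indices, are negatives of each other and therefore cancel. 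Hence $\partial_t w^\nu \equiv 0$; since $\eta(0,y)=y$ forces $a(0)=I$, evaluating at $t=0$ gives $w^\nu(0) = b_0^\nu$, which is precisely \eqref{GW1}.

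For \eqref{GW2}, I would multiply both sides of \eqref{GW1} by $\partial_\nu\eta^\beta$ and sum over $\nu$. The complementary inverse identity $\partial_\nu\eta^\beta\, a^{\nu\alpha} = \delta^{\alpha\beta}$ collapses the left-hand side to $b^\beta$, so $b^\beta = b_0^\nu\,\partial_\nu\eta^\beta = (b_0\cdot\partial)\eta^\beta$ is read off at once.

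The argument is purely algebraic and there is no real analytic difficulty; the only thing that needs care is index bookkeeping — in particular invoking both inverse relations $a\,\partial\eta = I$ and $\partial\eta\,a = I$ at the appropriate moments — together with the verification that the two contributions to $\partial_t w^\nu$ cancel term-by-term. This cancellation is the structural reason that $a^{\nu\alpha} b_\alpha$ is a Cauchy-type invariant of the induction equation, even though, as the introduction has already observed, no such invariant is available for $\operatorname{curl} v$.
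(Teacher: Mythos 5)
Your proposal is correct and follows essentially the same approach as the paper: both arguments amount to showing that $\partial_t(a^{\nu\alpha}b_\alpha)=0$ by combining the induction equation for $b$ with the time derivative of the cofactor matrix $a$, and then conclude \eqref{GW2} by contracting \eqref{GW1} with $\partial_\nu\eta^\beta$. The paper reaches the cancellation by pulling the product rule through $\partial_t(a^{\mu\beta}\partial_\mu\eta_\alpha)=\partial_t\delta^\beta_\alpha=0$ rather than by inserting the explicit formula $\partial_t a = -a(\partial v)a$, but this is only a cosmetic difference.
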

\begin{proof}
For \eqref{GW1}, we multiply $a^{\nu\alpha}$ to the second equation of \eqref{MHDL} to get $$a^{\nu\alpha}\p_tb_{\alpha}=a^{\nu\alpha}b_{\beta}a^{\mu\beta}\p_{\mu}\p_t\eta_{\alpha}=a^{\nu\alpha}b_{\beta}\p_t(\underbrace{a^{\mu\beta}\p_{\mu}\eta_{\alpha}}_{=\delta^{\beta}_{\alpha}})-b_{\beta}\p_ta^{\mu\beta}(\underbrace{\p_{\mu}\eta_{\alpha}a^{\nu\alpha}}_{\delta_{\mu}^{\nu}})=-b_{\alpha}\p_ta^{\nu\alpha},$$ so $\p_t(a^{\nu\alpha}b_{\alpha})=0$ and thus $a^{\nu\alpha}b_{\alpha}=b_0^{\nu}$. For \eqref{GW2}, it can be easily derived by multiplying $\p_{\nu}\eta_{\beta}$ on the both sides of \eqref{GW1} and using $a: \p\eta =I$.
\end{proof}

The last lemma records the well-known Kato-Ponce commutator estimates, the proof of which can be found in \cite{kato1988commutator}.
\begin{lem}\label{KatoPonce}
Let $J=(I-\Delta)^{1/2}$, $s\geq 0$. Then the following estimates hold: 

(1) $\forall s\geq 0$ and $1<p<\infty$, we have
\begin{equation}\label{KATO}
\|J^s(fg)-f(J^s g)\|_{L^p}\lesssim \|\partial f\|_{L^{\infty}}\|J^{s-1} g\|_{L^p}+\|J^s f\|_{L^p}\|g\|_{L^{\infty}};
\end{equation}

(2) $\forall s\geq 0$, we have 
\begin{equation}\label{product}
\|J^s(fg)\|_{L^2}\lesssim \|f\|_{W^{s,p_1}}\|g\|_{L^{p_2}}+\|f\|_{L^{q_1}}\|g\|_{W^{s,q_2}},
\end{equation}with $1/2=1/p_1+1/p_2=1/q_1+1/q_2$ and $2\leq p_1,q_2\leq \infty$;

(3) $\forall s\in (0,1)$, we have 
\begin{equation}\label{kato1}
\|J^s(fg)-f(J^s g)-(J^s f)g\|_{L^p}\lesssim \| f\|_{W^{s_1,p_1}}\|g\|_{W^{s-s_1,p_2}},
\end{equation}
where $0<s_1<s$ and $1/p_1+1/p_2=1/p$ with $1<p<p_1,p_2<\infty$;

(4) $\forall s\geq 1$, we have
\begin{equation}\label{kato2}
\|J^s(fg)-f(J^s g)\|_{L^2}\lesssim \|f\|_{W^{s,p_1}}\|g\|_{L^{p_2}}+\|f\|_{W^{1,q_1}}\|g\|_{W^{s-1,q_2}},
\end{equation}
where $1/2=1/p_1+1/q_1=1/p_2+1/q_2$ with $1<p<p_1,p_2<\infty$; and 
\begin{equation}\label{kato3}
\|J^s(fg)-(J^sf)g-f(J^sg)\|_{L^p}\lesssim\|f\|_{W^{1,p_1}}\|g\|_{W^{s-1,q_2}}+\|f\|_{W^{s-1,q_1}}\|g\|_{W^{1,q_2}}
\end{equation} for all the $1<p<p_1,p_2,q_1,q_2<\infty$ with $1/p_1+1/p_2=1/q_1+1/q_2=1/p$.
\end{lem}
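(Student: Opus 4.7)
The plan is to build everything on Littlewood--Paley theory together with Bony's paradifferential decomposition, which is the standard route to both the product and the commutator estimates of Kato--Ponce. Let $\{\Delta_j\}_{j\geq -1}$ denote the inhomogeneous Littlewood--Paley projectors and $S_j = \sum_{k<j}\Delta_k$. I will use Bony's decomposition
\[ fg = T_f g + T_g f + R(f,g), \qquad T_f g := \sum_j S_{j-2}f\,\Delta_j g, \qquad R(f,g) := \sum_{|j-k|\leq 2}\Delta_j f\,\Delta_k g, \]
together with the Littlewood--Paley characterization $\|u\|_{W^{s,p}}\simeq \bigl\|\bigl(\sum_j 2^{2js}|\Delta_j u|^2\bigr)^{1/2}\bigr\|_{L^p}$ ($1<p<\infty$), Bernstein's inequalities, and the Fefferman--Stein vector-valued maximal inequality. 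The low-frequency portion is treated separately since $J^s = (I-\Delta)^{s/2}$ is a bounded $L^p$ Fourier multiplier on frequencies $\lesssim 1$.

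First I would establish the product estimate (2). Since $S_{j-2}f\cdot \Delta_j g$ is frequency-localized at scale $2^j$, Bernstein gives $\|J^s(S_{j-2}f\cdot \Delta_j g)\|_{L^2}\lesssim 2^{js}\|S_{j-2}f\|_{L^{q_1}}\|\Delta_j g\|_{L^{q_2}}$, and summing with Fefferman--Stein yields $\|T_f g\|_{H^s}\lesssim \|f\|_{L^{q_1}}\|g\|_{W^{s,q_2}}$. The symmetric paraproduct $T_g f$ contributes $\|f\|_{W^{s,p_1}}\|g\|_{L^{p_2}}$. For the resonant term $R(f,g)$ the two factors have comparable frequencies, so the weight $2^{js}$ may be distributed to either factor, producing either of the two right-hand sides of (2).

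Next I would handle the single commutator estimates (1) and (4) via the splitting
\[ [J^s,f]g = \bigl(J^s T_f g - T_f J^s g\bigr) + \bigl(J^s T_g f - T_{J^s g}f\bigr) + \bigl(J^s R(f,g) - R(f,J^s g)\bigr). \]
Only the first bracket genuinely requires cancellation: $J^s T_f g - T_f J^s g = \sum_j [J^s, S_{j-2}f]\Delta_j g$, and since $|\eta|\ll|\xi|$ on the relevant frequency supports, a first-order Taylor expansion $\langle\xi+\eta\rangle^s - \langle\xi\rangle^s \sim s\langle\xi\rangle^{s-2}\xi\cdot\eta$ converts the paraproduct commutator into a zeroth-order operator acting on $\partial f \cdot J^{s-1}g$. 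H\"older then produces the $\|\partial f\|_{L^\infty}\|J^{s-1}g\|_{L^p}$ contribution. The remaining two brackets are symmetric in $(f,g)$ and fall under estimate (2), giving $\|J^s f\|_{L^p}\|g\|_{L^\infty}$. Estimate (4) is the same argument run at $L^2$ with the flexibility of distributing the $2^{js}$ weight between the two H\"older factors, which is exactly the origin of the second term $\|f\|_{W^{1,q_1}}\|g\|_{W^{s-1,q_2}}$.

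For the double commutator estimates (3) and (5), subtracting \emph{both} $fJ^s g$ and $(J^s f)g$ cancels the leading paraproduct contributions simultaneously, leaving only lower-order paraproduct remainders plus the fully resonant term $J^s R(f,g) - R(J^s f,g) - R(f,J^s g)$. This resonant piece is controlled by splitting the weight as $2^{js}=2^{js_1}\cdot 2^{j(s-s_1)}$ for (3), or $2^{j}\cdot 2^{j(s-1)}$ for (5), and applying Cauchy--Schwarz in $j$ with Bernstein to reconcile $L^p$ exponents. The main technical obstacle throughout is the management of the low-frequency part of $J^s$: unlike its homogeneous counterpart $|\nabla|^s$, the symbol $\langle\xi\rangle^s$ is not scale-invariant and the Taylor expansion is only effective for $|\xi|\gtrsim 1$. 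The low-frequency contribution must therefore be peeled off and absorbed using that $S_0 J^s$ is a bounded $L^p$-multiplier for every $1<p<\infty$, which is what allows the same bounds to extend from the homogeneous Kato--Ponce setting to the inhomogeneous one stated here.
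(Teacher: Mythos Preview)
Your sketch is correct and follows the standard modern route to the Kato--Ponce inequalities via Littlewood--Paley theory and Bony's paraproduct decomposition. The paper, however, does not prove this lemma at all: it simply records the inequalities as well-known and defers to the original reference \cite{kato1988commutator}. So there is no ``paper's own proof'' to compare against here.

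For completeness: the original 1988 argument of Kato and Ponce is built on the Coifman--Meyer theory of multilinear Fourier multipliers rather than on Bony's paraproduct formalism as you have presented it, though the two frameworks are closely related and ultimately rest on the same Littlewood--Paley machinery. Your paraproduct version has the advantage of making the role of each frequency interaction (high--low, low--high, resonant) transparent, which is why it has become the textbook treatment; the Coifman--Meyer route packages these interactions into a single bilinear symbol estimate. Either approach yields all five inequalities stated in the lemma.
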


\begin{flushright}
$\square$
\end{flushright}

\section{Pressure Estimates}
\label{section 3}
In this section we derive the estimates for $\|Q\|_{H^{3+\delta}}$ and $\|Q_t\|_{H^{2.5+\delta}}$. These quantities are both required in Section \ref{section 4}. 
\nota We denote $\PP=P(\|v\|_{H^{2.5+\delta}}, \|b\|_{H^{2.5+\delta}})$ and so $\PP_0=P(\|v_0\|_{H^{2.5+\delta}}, \|b_0\|_{H^{2.5+\delta}})$. 
\begin{lem}
Assume Lemma \ref{estimatesofa} holds. Then the total pressure $Q$ satisfies: 
\begin{equation}\label{estimatesofQ}
\|Q\|_{H^{3+\delta}}\lesssim \PP_0+\PP+P(\|\eta\|_{H^{3+\delta}})\left(\|Q_0\|_{H^{2+\delta}}+\int_0^t\|Q_t\|_{H^{2+\delta}}\right),
\end{equation}
and its time derivative $Q_t$ satisfies: 
\begin{equation}\label{estimatesofQt}
\|Q_t\|_{H^{2.5+\delta}}\lesssim\PP_0+\PP+P(\|v\|_{H^{2.5+\delta}})\left(\|Q_0\|_{H^{2+\delta}}+\int_0^t\|Q_t\|_{H^{2+\delta}}\right).
\end{equation}
\end{lem}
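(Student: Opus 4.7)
The strategy is to derive elliptic boundary value problems for $Q$ and $Q_t$ and apply standard variable-coefficient elliptic regularity, using the stabilising identities of Lemma~\ref{GW} to reorganise source terms so that no spatial derivative of $(\eta,v,b)$ beyond the assumed regularity threshold appears.

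\textbf{Step 1: elliptic equation for $Q$.} By Lemma~\ref{GW} the first equation of \eqref{MHDL} can be rewritten as $\p_t v_\alpha + a^\mu_\alpha \p_\mu Q = (b_0\cdot\p)^2 \eta_\alpha$. Applying the Eulerian divergence $a^{\nu\alpha}\p_\nu$, using Piola's identity \eqref{piola} to recast everything in divergence form, and replacing $a^{\nu\alpha}\p_\nu \p_t v_\alpha$ by $-\p_t a^{\nu\alpha}\,\p_\nu v_\alpha$ via the time-differentiated constraint $a^{\nu\alpha}\p_\nu v_\alpha=0$, we obtain
\[
\p_\nu\bigl(a^{\nu\alpha}a^\mu_\alpha \p_\mu Q\bigr) = \p_t a^{\nu\alpha}\,\p_\nu v_\alpha + \p_\nu\bigl(a^{\nu\alpha}(b_0\cdot\p)^2\eta_\alpha\bigr)\quad\text{in }\Omega,
\]
with Dirichlet datum $Q=\tfrac{1}{2}c^2$ on $\Gamma_1$ and, from restricting the $\alpha=3$ component of the momentum equation to $\Gamma_0$ (where $v_3\equiv 0$ hence $\p_tv_3|_{\Gamma_0}=0$), the Neumann-type condition $a^\mu_3\p_\mu Q = (b_0\cdot\p)^2\eta_3$ on $\Gamma_0$.

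\textbf{Step 2: $H^{3+\delta}$-estimate for $Q$.} By Lemma~\ref{estimatesofa}(7) the coefficient matrix $a^{\nu\alpha}a^\mu_\alpha$ lies $\epsilon$-close to $\delta^{\nu\mu}$ in $H^{1.5+\delta}$ for short time, so the operator is uniformly elliptic with $H^{1.5+\delta}$ coefficients and standard elliptic regularity gives
\[
\|Q\|_{H^{3+\delta}} \lesssim \|\text{RHS}\|_{H^{1+\delta}} + \|a^\mu_3\p_\mu Q\|_{H^{1.5+\delta}(\Gamma_0)} + P(\|\eta\|_{H^{3+\delta}})\|Q\|_{H^{2+\delta}}.
\]
Lemma~\ref{KatoPonce} bounds the source: $\p_t a\sim \p v\in H^{1.5+\delta}$ pairs with $\p v$, and $(b_0\cdot\p)^2\eta$ pairs $b_0\in H^{2.5+\delta}$ with $\p^2\eta\in H^{1+\delta}$; together they contribute $\PP_0+\PP$. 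The Neumann data on $\Gamma_0$ is estimated similarly, exploiting $b_0\cdot N|_{\Gamma_0}=0$ and $\eta_3|_{\Gamma_0}=0$ to simplify. The lower-order factor is handled by the fundamental theorem of calculus, $\|Q(t)\|_{H^{2+\delta}}\le \|Q_0\|_{H^{2+\delta}}+\int_0^t\|Q_t\|_{H^{2+\delta}}\,ds$, producing \eqref{estimatesofQ}.

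\textbf{Step 3: $Q_t$ and the main obstacle.} Differentiating the equation of Step~1 in $t$ yields
\[
\p_\nu\bigl(a^{\nu\alpha}a^\mu_\alpha \p_\mu Q_t\bigr) = \p_t(\text{RHS of Step 1}) - \p_\nu\bigl(\p_t(a^{\nu\alpha}a^\mu_\alpha)\,\p_\mu Q\bigr),
\]
together with time-differentiated boundary data. The second term on the right, combined with commutator pieces involving $\p_t a$ and $\p_t^2 a$ controlled by Lemma~\ref{estimatesofa}(4)--(6), accounts for the factor $P(\|v\|_{H^{2.5+\delta}})\|Q\|_{H^{2+\delta}}$ in \eqref{estimatesofQt}, since $\p_t a\sim \p v\in H^{1.5+\delta}$. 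The main obstacle is the term $\p_t\bigl((b_0\cdot\p)^2\eta_\alpha\bigr)$: writing it as $(b_0\cdot\p)\p_t b_\alpha$ and substituting the transport equation $\p_t b_\alpha = b_\beta a^{\mu\beta}\p_\mu v_\alpha$ produces a source containing $b_0^\mu \p_\mu a^{\nu\alpha}\p_\nu \p_t b_\alpha$, whose $H^{0.5+\delta}$ estimate demands $\p a\in L^\infty$, i.e.\ $\eta\in H^{3.5+\delta}$ --- half a derivative beyond the assumed regularity. The resolution is the identity $b_\alpha=(b_0\cdot\p)\eta_\alpha$ from Lemma~\ref{GW}: differentiating in $t$ directly gives $\p_t b_\alpha=(b_0\cdot\p)v_\alpha$ and hence
\[
\p_t\bigl((b_0\cdot\p)^2\eta_\alpha\bigr)=(b_0\cdot\p)^2 v_\alpha,
\]
whose $H^{0.5+\delta}$ norm is bounded by $\|b_0\|_{H^{2.5+\delta}}^2\|v\|_{H^{2.5+\delta}}$ via Lemma~\ref{KatoPonce}, using only the assumed regularity. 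The $H^{2.5+\delta}$ elliptic estimate then closes exactly as in Step~2, producing \eqref{estimatesofQt}.
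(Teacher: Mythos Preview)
Your overall architecture is right---derive elliptic problems for $Q$ and $Q_t$, exploit the smallness of $a-I$, and handle lower-order pieces via the fundamental theorem of calculus---but there is a genuine derivative-counting gap in Steps~2 and~3 that your stated bounds do not close.

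In Step~1 your source term is $\p_\nu\bigl(a^{\nu\alpha}(b_0\cdot\p)^2\eta_\alpha\bigr)$. In Step~2 you then estimate only $(b_0\cdot\p)^2\eta$ itself, claiming it ``pairs $b_0\in H^{2.5+\delta}$ with $\p^2\eta\in H^{1+\delta}$''. That bound is correct for $(b_0\cdot\p)^2\eta$, but the source carries an extra $\p_\nu(a^{\nu\alpha}\,\cdot\,)$. Expanding via Piola gives $a^{\nu\alpha}\p_\nu(b_0^\mu\p_\mu b_\alpha)$, whose top-order piece is $a^{\nu\alpha}b_0^\mu\,\p_\nu\p_\mu b_\alpha$; since $\p^2 b\in H^{0.5+\delta}$ only, this lies in $H^{0.5+\delta}$, not the $H^{1+\delta}$ you need for the $H^{3+\delta}$ elliptic estimate. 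Using divergence-form regularity instead does not help either: you would need $a^{\nu\alpha}(b_0\cdot\p)b_\alpha\in H^{2+\delta}$, but it is only in $H^{1.5+\delta}$. The same obstruction recurs in Step~3: your source $\p_\nu\bigl(a^{\nu\alpha}(b_0\cdot\p)^2 v_\alpha\bigr)$ contains $\p^3 v\in H^{-0.5+\delta}$, far from the $H^{0.5+\delta}$ required.

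The missing ingredient is the Eulerian divergence-free constraint $a^{\nu\alpha}\p_\nu b_\alpha=0$. Writing
\[
a^{\nu\alpha}b_0^\mu\,\p_\nu\p_\mu b_\alpha
= b_0^\mu\p_\mu\bigl(\underbrace{a^{\nu\alpha}\p_\nu b_\alpha}_{=0}\bigr)
 - b_0^\mu(\p_\mu a^{\nu\alpha})\,\p_\nu b_\alpha
\]
kills the second-order piece; the paper then expands $b_0^\mu\p_\mu a^{\nu\alpha}$ via Lemma~\ref{estimatesofa}(8) and the identity $b_\gamma=(b_0\cdot\p)\eta_\gamma$ so that the magnetic source becomes a sum of three bilinear terms in \emph{first} derivatives of $b$, $b_0$, and $a$ (see \eqref{Q}), all of which sit comfortably in $H^{1+\delta}$. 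Crucially, this reduction is performed \emph{before} differentiating in $t$: the time-differentiated source then contains only $\p\p_t b$, not $\p^2\p_t b$, and the problematic combination $b_0^\mu(\p_\mu a^{\nu\alpha})\,\p_\nu\p_t b_\alpha$ (which would need $\eta\in H^{3.5+\delta}$) never appears. Your ``resolution'' $\p_t((b_0\cdot\p)^2\eta)=(b_0\cdot\p)^2 v$ is a correct identity, but by itself it does not cure the extra derivative coming from the outer $\p_\nu(a^{\nu\alpha}\,\cdot\,)$.
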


\textbf{Proof: } Applying $a^{\nu\alpha}\partial_{\nu}$ to the first equation of \eqref{MHDL}, we have: 
\begin{equation}
a^{\nu\alpha}\partial_{\nu}(a^{\mu}_{\alpha}\partial_{\mu}Q)=-a^{\nu\alpha}\partial_{\nu}\partial_t  v_{\alpha}+a^{\nu\alpha}\partial_{\nu}(b_{\beta}a^{\mu\beta}\partial_{\mu}b_{\alpha})=-a^{\nu\alpha}\partial_{\nu}\partial_tv_{\alpha}+a^{\nu\alpha}\partial_{\nu}(b_0^{\mu}\partial_{\mu}b_{\alpha}),
\end{equation} where we have used \eqref{GW1}.

Invoking the Piola's identity \eqref{piola}, Lemma \ref{estimatesofa} (8) and \eqref{GW2}, we get: 
\begin{equation*}
-a^{\nu\alpha}\p_\nu\p_t v_\alpha=\p_t a^{\nu\alpha}\p_\nu v_\alpha,
\end{equation*}
and
\[
\begin{aligned}
a^{\nu\alpha}\p_{\nu}(b_0^{\mu}\p_{\mu}b_{\alpha})&=a^{\nu\alpha}\p_{\nu}b_0^{\mu}\p_{\mu}b_{\alpha}+a^{\nu\alpha}b_0^{\mu}\p_{\nu}\p_{\mu}b_{\alpha} \\
&=a^{\nu\alpha}\p_{\nu}b_0^{\mu}\p_{\mu}b_{\alpha}+b_0^{\mu}\p_{\mu}(\underbrace{a^{\nu\alpha}\p_{\nu}b_{\alpha}}_{=0})-b_0^{\mu}\p_{\mu}a^{\nu\alpha}\p_{\nu}b_{\alpha}\\
&=a^{\nu\alpha}\p_{\nu}b_0^{\mu}\p_{\mu}b_{\alpha}+b_0^{\mu}\p_{\mu}\p_{\beta}\eta_{\gamma}a^{\nu\gamma}a^{\beta\alpha}\p_{\nu}b_{\alpha}\\
&=a^{\nu\alpha}\p_{\nu}b_0^{\mu}\p_{\mu}b_{\alpha}+\p_{\beta}((b_0\cdot\p)\eta_{\gamma})a^{\nu\gamma}a^{\beta\alpha}\p_{\nu}b_{\alpha}-\p_{\beta}b_0^{\mu}\underbrace{\p_{\mu}\eta_{\gamma}a^{\nu\gamma}}_{=\delta_{\mu}^{\nu}}a^{\beta\alpha}\p_{\nu}b_{\alpha}\\
&=a^{\nu\alpha}\p_{\nu}b_0^{\mu}\p_{\mu}b_{\alpha}+\p_{\beta}b_{\gamma}a^{\nu\gamma}a^{\beta\alpha}\p_{\nu}b_{\alpha}-\p_{\beta}b_0^{\mu}a^{\beta\alpha}\p_{\mu}b_{\alpha}.
\end{aligned}
\]

Thus, the total pressure $Q$ satisfies
\begin{equation}
\p^{\mu}\p_{\mu}Q=\partial_ta^{\nu\alpha}\p_{\nu}  v_{\alpha}+\p_{\nu}((\delta^{\mu\nu}-a^{\mu}_{\alpha}a^{\nu\alpha})\p_{\mu}Q)+a^{\nu\alpha}\p_{\nu}b_0^{\mu}\p_{\mu}b_{\alpha}+\p_{\beta}b_{\gamma}a^{\nu\gamma}a^{\beta\alpha}\p_{\nu}b_{\alpha}-\p_{\beta}b_0^{\mu}a^{\beta\alpha}\p_{\mu}b_{\alpha}, \label{Q}
\end{equation} 
 with the boundary conditions
\begin{equation}\label{QB1}
Q=\frac{1}{2}c^2 \,\,\text{on}\,\,\Gamma_1,\q  \text{and} \q
a^{\mu}_{\alpha}\p_{\mu}QN^{\alpha}=0\,\, \text{on}\,\, \Gamma_0,
\end{equation} 
where the second condition can be rewritten as
\begin{equation}\label{QB2}
\partial_{\alpha}QN^{\alpha}=(\delta^{\mu}_{\alpha}-a^{\mu}_{\alpha})\partial_{\mu}Q N^{\alpha}\,\, \text{on}\,\, \Gamma_0.
\end{equation}

The standard elliptic estimate yields that 
\begin{equation}\label{Q1}
\begin{aligned}
\|Q\|_{H^{3+\delta}}&\lesssim \underbrace{\|\p_t a^{\nu\alpha}\p_{\nu}  v_{\alpha}\|_{H^{1+\delta}}}_{\mathcal{Q}_1}\\
&+\underbrace{\|(\delta^{\mu\nu}-a^{\mu}_{\alpha}a^{\nu\alpha})\p_{\mu}Q\|_{H^{2+\delta}}+\|(\delta_{\alpha}^{\mu}-a^{\mu}_{\alpha})\p_{\mu}Q N^{\alpha}\|_{H^{1.5+\delta}(\Gamma)}}_{\mathcal{Q}_2}\\
&+\underbrace{\|a^{\nu\alpha}\p_{\nu}b_0^{\mu}\p_{\mu}b_{\alpha}\|_{H^{1+\delta}}+\|\p_{\beta}b_{\gamma}a^{\nu\gamma}a^{\beta\alpha}\p_{\nu}b_{\alpha}\|_{H^{1+\delta}}+\|\p_{\beta}b_0^{\mu}a^{\beta\alpha}\p_{\mu}b_{\alpha}\|_{H^{1+\delta}}}_{\mathcal{Q}_3}\\
\end{aligned}
\end{equation}
\paragraph*{Bounds for $\mathcal{Q}_1$:  } We have: 
\begin{equation}\label{Q11}
\begin{aligned}
\|\p_ta^{\nu\alpha}\p_{\nu}  v_{\alpha}\|_{H^{1+\delta}}&\lesssim \|\p_ta^{\nu\alpha}\|_{H^{1+\delta}}\|\p_{\nu}  v_{\alpha}\|_{H^{1.5+\delta}}\\
&\lesssim  \|\eta\|_{H^{2.5+\delta}}\|v\|_{H^{2+\delta}}\|v\|_{H^{2.5+\delta}}\leq C\|v\|_{H^{2.5+\delta}}^2\|v\|_{H^{2+\delta}}^2,
\end{aligned}
\end{equation}
where we used $\|a\|_{H^{1.5+\delta}}\lesssim \|\eta\|_{H^{2.5+\delta}}^2$ and the multiplicative Sobolev inequality
\begin{equation}
\|fg\|_{H^{1+\delta}}\lesssim\|f\|_{H^{1+\delta}}\|g\|_{H^{1.5+\delta}},
\label{multiplicative}
\end{equation}
which is a direct consequence of \eqref{product} and the  Sobolev embedding.

\paragraph*{Bounds for $\mathcal{Q}_2$:  }
Invoking Lemma \ref{estimatesofa} (7) and \eqref{product}, we have: 

\begin{equation}\label{Q21}
\begin{aligned}
\|(\delta^{\mu\nu}-a^{\mu}_{\alpha}a^{\nu\alpha})\p_{\mu}Q\|_{H^{2+\delta}}&\lesssim \|I-a: a^T\|_{L^{\infty}}\|\p_{\mu}Q\|_{H^{2+\delta}}+\|I-a: a^T\|_{H^{2+\delta}}\|\p_{\mu}Q\|_{L^{\infty}} \\
&\lesssim\epsilon\|Q\|_{H^{3+\delta}}+(1+\|\eta\|_{H^{3+\delta}}^4)\|Q\|_{H^{2+\delta}}^{1/2}\|Q\|_{H^{3+\delta}}^{1/2}\\
&\lesssim \epsilon\|Q\|_{H^{3+\delta}}+P(\|\eta\|_{H^{3+\delta}})\|Q\|_{H^{2+\delta}}\\
&\lesssim \epsilon \|Q\|_{H^{3+\delta}}+ P(\|\eta\|_{H^{3+\delta}})\left(\|Q_0\|_{H^{2+\delta}}+\int_0^t \|Q_t\|_{H^{2+\delta}}ds\right),
\end{aligned}
\end{equation}
and similarly
\begin{equation}\label{Q22}
\begin{aligned}
\|(\delta^{\mu}_{\alpha}-a^{\mu}_{\alpha})\p_{\mu}Q N^{\alpha}\|_{H^{1.5+\delta}(\Gamma)}&\lesssim\|I-a\|_{L^{\infty}}\|Q\|_{H^{3+\delta}}+\|I-a\|_{H^{2+\delta}}\|Q\|_{H^{2.5+\delta}} \\
&\lesssim\epsilon \|Q\|_{H^{3+\delta}}+ P(\|\eta\|_{H^{3+\delta}})\left(\|Q_0\|_{H^{2+\delta}}+\int_0^t \|Q_t\|_{H^{2+\delta}}ds\right).
\end{aligned}
\end{equation}

\paragraph*{Bounds for $\mathcal{Q}_3$:  } All the terms in $\mathcal{Q}_2$ can be controlled by $C\|b\|_{H^{2.5+\delta}}\|b_0\|_{H^{2.5+\delta}}+C\|b\|_{H^{2.5+\delta}}^2$ via the multiplicative Sobolev inequality. We only write the first term and the others are treated similarly.
\begin{equation}\label{Q3}
\|a^{\nu\alpha}\p_{\nu}b_0^{\mu}\p_{\mu}b_{\alpha}\|_{H^{1+\delta}}\lesssim\|a^{\nu\alpha}\|_{H^{1.5+\delta}}\|\p_{\nu}b_0^{\mu}\p_{\mu}b_{\alpha}\|_{H^{1+\delta}}\lesssim C\|b\|_{H^{2.5+\delta}}\|b_0\|_{H^{2.5+\delta}}.
\end{equation} 
Summing up the bounds for $\mathcal{Q}_1$-$\mathcal{Q}_3$, then absorbing the $\epsilon$-term to LHS, we conclude the estimates of $Q$ as: 
\begin{equation}
\|Q\|_{H^{3+\delta}}\lesssim \PP_0+\PP+P(\|\eta\|_{H^{3+\delta}})\left(\|Q_0\|_{H^{2+\delta}}+\int_0^t \|Q_t\|_{H^{2+\delta}}ds\right).
\label{estimatesofQpre}
\end{equation}

Now we start to prove the estimates of $Q_t$. Taking time derivative of \eqref{Q}, we obtain: 
\begin{equation}\label{Qt0}
\begin{aligned}
\p^{\mu}\p_{\mu}Q_t&=\p_{tt}a^{\nu\alpha}\p_{\nu}v_{\alpha}+\p_ta^{\nu\alpha}\p_{\nu}\p_t v_{\alpha}\\
&~~~~-\p_{\nu}(\p_ta^{\mu}_{\alpha}a^{\nu\alpha}\p_{\mu}Q)-\p_{\nu}(a^{\mu}_{\alpha}\p_t a^{\nu\alpha}\p_{\mu}Q)+\p_{\nu}((\delta^{\mu\nu}-a^{\mu}_{\alpha}a^{\nu}_{\alpha})\p_{\mu}Q_t) \\
&~~~~+a^{\nu\alpha}_t\p_{\nu}b_0^{\mu}\p_{\mu}b_{\alpha}+a^{\nu\alpha}\p_{\nu}b_0^{\mu}\p_t\p_{\mu}b_{\alpha}+\p_t(\p_{\beta}b_{\gamma}\p_{\nu}b_{\alpha})a^{\nu\gamma}a^{\beta\alpha}+\p_{\beta}b_{\gamma}\p_t(a^{\nu\gamma}a^{\beta\alpha})\p_{\nu}b_{\alpha}\\
&~~~~-\p_{\beta}b_0^{\mu}a^{\beta\alpha}\p_t\p_{\mu}b_{\alpha}-\p_{\beta}b_0^{\mu}a^{\beta\alpha}_t\p_{\mu}b_{\alpha}.
\end{aligned}
\end{equation}
with the boundary conditions
\begin{equation}
\begin{aligned}
Q_t=0\q\text{on}\,\,\Gamma_1,\\
\partial_{\alpha}Q_tN^{\alpha}=-\partial_t a^{\mu}_{\alpha}\partial_{\mu}Q N^{\alpha}+(\delta^{\mu}_{\alpha}-a^{\mu}_{\alpha})\partial_{\mu}Q_t N^{\alpha}\q\text{on}\,\,\Gamma_{0}.
\end{aligned}
\end{equation}
By the elliptic estimate, we have: 
\begin{equation}
\begin{aligned}\label{Qt1}
&~~~~\|Q_t\|_{H^{2.5+\delta}}\\
&\lesssim \|\p_{tt}a^{\nu\alpha}\p_{\nu}v_{\alpha}\|_{H^{0.5+\delta}}+\|\p_ta^{\nu\alpha}\p_{\nu}\p_t v_{\alpha}\|_{H^{0.5+\delta}}+\|\p_ta^{\mu}_{\alpha}a^{\nu\alpha}\p_{\mu}Q\|_{H^{1.5+\delta}}+\|a^{\mu}_{\alpha}\p_t a^{\nu\alpha}\p_{\mu}Q\|_{H^{1.5+\delta}}\\
&~~~~+\|(\delta^{\mu\nu}-a^{\mu}_{\alpha}a^{\nu}_{\alpha})\p_{\mu}Q_t\|_{H^{1.5+\delta}}+\|\p_t a^{\mu}_{\alpha}\p_{\mu}Q N^{\alpha}\|_{H^{{1+\delta}}(\Gamma)}+\|(\delta^{\mu}_{\alpha}-a^{\mu}_{\alpha})\p_{\mu}Q_t N^{\alpha}\|_{H^{1+\delta}(\Gamma)}\\
&~~~~+\|a^{\nu\alpha}_t\p_{\nu}b_0^{\mu}\p_{\mu}b_{\alpha}\|_{H^{0.5+\delta}}+\|a^{\nu\alpha}\p_{\nu}b_0^{\mu}\p_t\p_{\mu}b_{\alpha}\|_{H^{0.5+\delta}}+\|\p_t(\p_{\beta}b_{\gamma}\p_{\nu}b_{\alpha})a^{\nu\gamma}a^{\beta\alpha}\|_{H^{0.5+\delta}}\\
&~~~~+\|\p_{\beta}b_{\gamma}\p_t(a^{\nu\gamma}a^{\beta\alpha})\p_{\nu}b_{\alpha}\|_{H^{0.5+\delta}}\\
&~~~~+\|\p_{\beta}b_0^{\mu}a^{\beta\alpha}\p_t\p_{\mu}b_{\alpha}\|_{H^{0.5+\delta}}+\|\p_{\beta}b_0^{\mu}a^{\beta\alpha}_t\p_{\mu}b_{\alpha}\|_{H^{0.5+\delta}}.
\end{aligned}
\end{equation}
First, since $\partial_t v_{\alpha}=a^{\mu}_{\alpha}\p_{\mu}Q-b_{\beta}a^{\mu\beta}\partial_{\mu}b_{\alpha}$ we have: 
\begin{equation}
\|v_t\|_{H^{1.5+\delta}}\lesssim \|b\|_{H^{1.5+\delta}}^2\|a\|_{H^{1.5+\delta}}+\|Q\|_{H^{2.5+\delta}}\|a\|_{H^{1.5+\delta}}.
\end{equation}
Using this and the multiplicative Sobolev inequality
\begin{equation}
\|fg\|_{H^{0.5+\delta}}\lesssim\|f\|_{H^{0.5+\delta}}\|g\|_{H^{1.5+\delta}},
\label{multiplicative2}
\end{equation}
the first two terms of \eqref{Qt1} are treated as: 
\begin{equation}\label{Qt111}
\begin{aligned}
&~~~~\|\p_{tt}a^{\nu\alpha}\p_{\nu}v_{\alpha}\|_{H^{0.5+\delta}}+\|\p_ta^{\nu\alpha}\p_{\nu}\p_t v_{\alpha}\|_{H^{0.5+\delta}}\\
&\lesssim \|a_{tt}\|_{H^{0.5+\delta}}\|v\|_{H^{2.5+\delta}}+\|a_t\|_{H^{1.5+\delta}}\|v_t\|_{H^{1.5+\delta}}\\
&\lesssim \|v\|_{H^{2.5+\delta}}^2\|v\|_{H^{1.5+\delta}} + \|\eta\|_{H^{2.5+\delta}}\|v\|_{H^{2.5+\delta}}\|v_t\|_{H^{1.5+\delta}}  \\
&\lesssim \PP+\|v\|_{H^{2.5+\delta}}\left(\|Q_0\|_{H^{2+\delta}}+\int_0^t \|Q_t\|_{H^{2+\delta}}ds\right).
\end{aligned}
\end{equation}

Second, invoking \eqref{multiplicative} and Lemma \ref{estimatesofa} (7),  the terms containing $Q$ in \eqref{Qt1} are treated as: 
\begin{equation}\label{Qt12}
\begin{aligned}
&~~~~~~~~\|\p_ta^{\mu}_{\alpha}a^{\nu\alpha}\p_{\mu}Q\|_{H^{1.5+\delta}}+\|a^{\mu}_{\alpha}\p_t a^{\nu\alpha}\p_{\mu}Q\|_{H^{1.5+\delta}}+\|\p_t a^{\mu}_{\alpha}\p_{\mu}Q N^{\alpha}\|_{H^{{1+\delta}}(\Gamma)}\\
&~~~~+\|(\delta^{\mu\nu}-a^{\mu}_{\alpha}a^{\nu}_{\alpha})\p_{\mu}Q_t\|_{H^{1.5+\delta}}+\|(\delta^{\mu}_{\alpha}-a^{\mu}_{\alpha})\p_{\mu}Q_t N^{\alpha}\|_{H^{1+\delta}(\Gamma)} \\
&\lesssim\|a\|_{H^{1.5+\delta}}\|a_t\|_{H^{1.5+\delta}}\|Q\|_{H^{2.5+\delta}}\\
&~~~~+\|a_t\|_{H^{1.5+\delta}}\|Q\|_{H^{2.5+\delta}}+\|I-a^T: a\|_{H^{1.5+\delta}}\|Q_t\|_{H^{2.5+\delta}}+\|I-a\|_{H^{1.5+\delta}}\|Q_t\|_{H^{2.5+\delta}}\\
&\lesssim \|v\|_{H^{2.5+\delta}}\left(\|Q_0\|_{H^{2+\delta}}+\int_0^t \|Q_t\|_{H^{2+\delta}}ds\right)+\epsilon\|Q_t\|_{H^{2.5+\delta}},
\end{aligned}
\end{equation}
which can be controlled appropriately by the RHS of \eqref{estimatesofQt} by plugging in the estimate \eqref{estimatesofQ}.

Now it remains to control the terms containing $b$ in \eqref{Qt1} (the last 6 terms). In fact, all the terms containing $b$ can be controlled with the help of the multiplicative Sobolev inequality \eqref{multiplicative2}. The terms not containing $b_t$ are easier to control: 
\begin{equation}\label{sb}
\begin{aligned}
&~~~~\|a^{\nu\alpha}_t\p_{\nu}b_0^{\mu}\p_{\mu}b_{\alpha}\|_{H^{0.5+\delta}}+\|\p_{\beta}b_{\gamma}\p_t(a^{\nu\gamma}a^{\beta\alpha})\p_{\nu}b_{\alpha}\|_{H^{0.5+\delta}}+\|\p_{\beta}b_0^{\mu}a^{\beta\alpha}_t\p_{\mu}b_{\alpha}\|_{H^{0.5+\delta}}\\
&\lesssim \|a_t\|_{H^{0.5+\delta}}\|b_0\|_{H^{2.5+\delta}}\|b\|_{H^{2.5+\delta}}\\
&~~~~+\|a_t\|_{H^{0.5+\delta}}\|a\|_{H^{1.5+\delta}}\|b\|_{H^{2.5+\delta}}^2+\|a_t\|_{H^{0.5+\delta}}\|b_0\|_{H^{2.5+\delta}}\|b\|_{H^{2.5+\delta}}\|\eta\|_{H^{2.5+\delta}}\\
&\lesssim \PP.
\end{aligned}
\end{equation}

For the terms containing $b_t$, we have to put $H^{0.5+\delta}$ norm on $\p b_t$ when we use the multiplicative Sobolev inequality \eqref{multiplicative2}, because we only have $b_t\in H^{1.5+\delta}$. This can be directly derived by taking time derivative of $\p_t b_{\alpha}=b_{\beta}a^{\mu\beta}\partial_{\mu}v_{\alpha}=b_0^{\mu}\partial_{\mu}v_{\alpha}$, which implies $$\|b_t\|_{H^{1.5+\delta}}\lesssim \|v_t\|_{H^{1.5+\delta}}\|b_0\|_{H^{1.5+\delta}}\lesssim \|b_0\|_{H^{1.5+\delta}}\|v\|_{H^{2.5+\delta}}.$$ 
Therefore,
\begin{equation}\label{moresb}
\begin{aligned}
&~~~~\|a^{\nu\alpha}\p_{\nu}b_0^{\mu}\p_t\p_{\mu}b_{\alpha}\|_{H^{0.5+\delta}}+\|\p_t(\p_{\beta}b_{\gamma}\p_{\nu}b_{\alpha})a^{\nu\gamma}a^{\beta\alpha}\|_{H^{0.5+\delta}}+\|\p_{\beta}b_0^{\mu}a^{\beta\alpha}\p_t\p_{\mu}b_{\alpha}\|_{H^{0.5+\delta}}\\
&\lesssim\|a\|_{H^{1.5+\delta}}\|b_0\|_{H^{2.5+\delta}}\|b_t\|_{H^{1.5+\delta}}+\|a\|_{H^{1.5+\delta}}^2\|b\|_{H^{2.5+\delta}}\|b_t\|_{H^{1.5+\delta}}\\
&\lesssim\PP_0+\PP.
\end{aligned}
\end{equation}

Summing these bounds up, and absorbing the $\epsilon$-term to LHS, we obtain: 
\begin{equation}
\|Q_t\|_{H^{2.5+\delta}}\lesssim \PP_0+\PP+P(\|v\|_{H^{2.5+\delta}})\left(\|Q_0\|_{H^{2+\delta}}+\int_0^t\|Q_t\|_{H^{2+\delta}}\right),
\end{equation}
which yields \eqref{estimatesofQt}.

\section{Tangential Estimates}
\label{section 4}
In this section, we establish the tangential energy estimate for the incompressible MHD equations. 
\nota We define $\cp=(I-\overline{\lap})^{1/2}$ where $\overline{\lap}=\p_1^2+\p_2^2$ to be the tangential differential operator. 
\begin{thm} Let $S=\cp^{2.5+\delta}$. Let $E(t)=\|Sv\|_{L^2}^2+\|Sb\|_{L^2}^2+\frac{\epsilon_0}{2}\|a^3_{\alpha}S\eta^{\alpha}\|_{L^2(\Gamma_1)}^2$. Then there exists a $T>0$ such that for each $t\in [0,T]$, the estimate
\begin{equation}\label{T0}
\begin{aligned}
E(t)
\lesssim \PP_0+\int_0^t\PP+\int_0^t P(\|Q\|_{H^{3+\delta}},\|Q_t\|_{H^{2.5+\delta}},\|\eta\|_{H^{3+\delta}})ds
\end{aligned}
\end{equation}
holds. 
\end{thm}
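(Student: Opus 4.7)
The plan is to apply the tangential operator $S = \cp^{2.5+\delta}$ to the reformulated momentum equation \eqref{re-write}, namely $\p_t v_\alpha + a^\mu_\alpha \p_\mu Q = (b_0 \cdot \p)^2 \eta_\alpha$, and to test the result against $Sv^\alpha$ in $L^2(\Om)$. After subtracting the constant $c^2/2$ from $Q$ (which leaves the equation invariant since its gradient vanishes and preserves the Taylor sign) the trace of the shifted $Q$ on $\Gamma_1$ vanishes. The resulting identity reads
\[
\tfrac{1}{2}\tfrac{d}{dt}\|Sv\|_{L^2}^2 + \int_\Om Sv^\alpha\, S(a^\mu_\alpha \p_\mu Q)\, dy = \int_\Om Sv^\alpha\, S\big((b_0 \cdot \p)^2 \eta_\alpha\big)\, dy,
\]
and I split the analysis of the right-hand side into the magnetic contribution and the pressure contribution.

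For the magnetic term, using $b_\alpha = (b_0 \cdot \p)\eta_\alpha$ from Lemma \ref{GW}, the integrand equals $Sv^\alpha \cdot S\big((b_0 \cdot \p) b_\alpha\big)$. I would commute $S$ past the first-order operator $b_0\cdot\p$ and then integrate by parts in $b_0\cdot\p$; the hypothesis $b_0 \cdot N = 0$ on $\p\Om$ makes $b_0$ tangential to $\p\Om$, so the boundary terms vanish. This transfers $b_0 \cdot \p$ onto $Sv^\alpha$ and, combined with the transport identity $\p_t b_\alpha = (b_0 \cdot \p) v_\alpha$ (obtained from the second equation of \eqref{MHDL} together with Lemma \ref{GW}), produces $-\tfrac{1}{2}\tfrac{d}{dt}\|Sb\|_{L^2}^2$ up to $[S, b_0 \cdot \p]$ commutator errors controlled by $\PP$ via the Kato--Ponce inequalities \eqref{KATO}--\eqref{kato3}.

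For the pressure term I would employ an Alinhac good-unknown decomposition. Writing $a^\mu_\alpha \p_\mu Q = \p_\mu(a^\mu_\alpha Q)$ via Piola \eqref{piola}, commuting $S$ with $\p_\mu$, and integrating by parts yields (i) interior commutators bounded using Kato--Ponce plus the divergence-free relation $a^\mu_\alpha \p_\mu v^\alpha = 0$, and (ii) a boundary contribution extracted from the corrector $S\eta^\beta \p_\beta Q$ whose trace on $\Gamma_1$ is proportional to $\p_3 Q \cdot a^3_\alpha S\eta^\alpha$ (up to lower-order terms). Using $\p_t \eta^\alpha = v^\alpha$ this reorganises into the exact-derivative piece
\[
\tfrac{1}{2}\tfrac{d}{dt}\int_{\Gamma_1} (-\p_3 Q)\,|a^3_\alpha S\eta^\alpha|^2 \, dS,
\]
and invoking the Taylor sign condition $-\p_3 Q \geq \epsilon_0$ extracts the $\frac{\epsilon_0}{2}\|a^3_\alpha S\eta^\alpha\|^2_{L^2(\Gamma_1)}$ contribution to $E(t)$. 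The error from $\p_t(-\p_3 Q)$ is absorbed into the right-hand side through $\|Q_t\|_{H^{2.5+\delta}}$.

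The remaining interior commutators, notably $[S, a^\mu_\alpha]\p_\mu Q$ and $[S, a^\mu_\alpha]\p_\mu v^\alpha$, are estimated via the Kato--Ponce inequalities \eqref{KATO}--\eqref{kato3}, the pressure bounds \eqref{estimatesofQ}--\eqref{estimatesofQt}, and Lemma \ref{estimatesofa}. Adding all bounds and integrating in time yields \eqref{T0}. The main obstacle I anticipate is controlling $[S, a^\mu_\alpha]$ applied to $\p Q$ and $\p v$: since $a \in H^{2+\delta}$ narrowly fails the $W^{1,\infty}$ embedding in three dimensions, the endpoint Kato--Ponce decompositions \eqref{kato2}--\eqref{kato3} must be applied with care, placing one derivative on $\eta \in H^{3+\delta}$ and the other on $Q\in H^{3+\delta}$ or $v\in H^{2.5+\delta}$. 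A secondary subtlety is isolating the Alinhac boundary term in precisely the symmetric form above so that the Taylor sign condition can be invoked without residual error.
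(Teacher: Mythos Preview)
Your proposal is correct and follows the same overall architecture as the paper: the magnetic contribution is handled exactly as you describe (commute $S$ past $b_0\cdot\p$, integrate by parts using $b_0\cdot N=0$ and $\dive b_0=0$, and observe the cancellation with $\tfrac12\tfrac{d}{dt}\|Sb\|_{L^2}^2$), and the pressure contribution yields the boundary energy via the Taylor sign condition after writing $Sv=\p_t S\eta$ and pulling out a time derivative.

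The one genuine difference is in how the boundary term is extracted from the pressure piece. You invoke an Alinhac good-unknown corrector, whereas the paper does \emph{not} use Alinhac here; in fact it remarks explicitly that Alinhac good unknowns are the natural tool only when $\p a\in L^\infty$, i.e.\ when $s>3.5$. Instead the paper splits $-\int Sv^\alpha S(a^\mu_\alpha\p_\mu Q)$ into three pieces $I_1+I_2+I_3$: the symmetric commutator $I_3$ is handled by \eqref{kato3}; $I_1$ (with $S$ on $\p Q$) is integrated by parts and closed using $a^\mu_\alpha\p_\mu v^\alpha=0$; and the crucial boundary term comes from $I_2=-\int Sv^\alpha (Sa^\mu_\alpha)\p_\mu Q$, where one first writes $S=\sum_m S_m\p_m+S_0$, then uses the identity $\p_m a^\mu_\alpha=-a^\mu_\nu\p_\beta\p_m\eta^\nu a^\beta_\alpha$ to trade $Sa$ for $S\p\eta$, and finally integrates $\p_\beta$ by parts to produce $\int_{\Gamma_1}(a^3_\alpha Sv^\alpha)(a^3_\nu S\eta^\nu)\p_3 Q$. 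This route relies on the extra half-derivative $\eta\in H^{3+\delta}$ (which the energy $N(t)$ carries) rather than on $\p a\in L^\infty$. Your Alinhac packaging arrives at the same boundary term, but you should be aware that the residual commutators after subtracting the corrector still contain factors of $\p a$ paired with top-order quantities; closing them at this regularity forces you back onto the $\|\eta\|_{H^{3+\delta}}$ bound anyway, so the two routes ultimately use the same resources. Your anticipated obstacle (the endpoint Kato--Ponce bookkeeping for $[S,a]$) is real and is precisely where the paper's explicit $\p_m a$ identity earns its keep.
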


We prove this theorem by estimating $v$ and $b$ separately. 


\subsection{Tangential estimates of $v$}
First, we derive the tangential estimates of $v$.

\begin{equation}\label{V}
\begin{aligned}
\frac{1}{2}\frac{d}{dt}\int_{\Omega}(Sv^{\alpha})(Sv_{\alpha})dy &=\int_{\Omega}(Sv^{\alpha})(\partial_t Sv_{\alpha})  dy \\
&=-\int_{\Omega}(Sv^{\alpha})(S(a^{\mu}_{\alpha}\partial_{\mu}Q))dy+\int_{\Omega}(Sv^{\alpha})(S(b_{\beta}a^{\mu\beta}\partial_{\mu}b_{\alpha}))dy\\
&=: I+J.
\end{aligned}
\end{equation}

To control $I$, we have: 

\begin{equation}
\begin{aligned}\label{I}
I&=-\int_{\Omega}(Sv^{\alpha})(S(a^{\mu}_{\alpha}\p_{\mu}Q))dy\\
&=\underbrace{-\int_{\Omega}(Sv^{\alpha})(a^{\mu}_{\alpha})(S\p_{\mu}Q)dy}_{I_1}\underbrace{-\int_{\Omega}(Sv^{\alpha})(Sa^{\mu}_{\alpha})(\p_{\mu}Q)dy}_{I_2}\\ 
&~~~~~\underbrace{-\int_{\Omega} (Sv^{\alpha})[S(a^{\mu}_{\alpha}\p_{\mu}Q)-a^{\mu}_{\alpha}(S\p_{\mu}Q)-(Sa^{\mu}_{\alpha})\p_{\mu}Q]dy}_{I_3}.
\end{aligned}
\end{equation}
\paragraph*{Control of $I_3$:  }
This is a direct consequence of the Kato-Ponce inequality \eqref{kato3}, i.e., 
\begin{equation}
\begin{aligned}
I_3&\leq \|Sv\|_{L^2}(\|a^{\mu}_{\alpha}\|_{W^{1,6}}\|\p_{\mu}Q\|_{W^{1.5+\delta,3}}+\|a^{\mu}_{\alpha}\|_{W^{1.5+\delta,3}}\|\p_{\mu}Q\|_{W^{1,6}})  \\
&\lesssim \|v\|_{H^{2.5+\delta}}\|a\|_{H^{2+\delta}}\|Q\|_{H^{3+\delta}}\\
&\lesssim \|v\|_{H^{2.5+\delta}}\|\eta\|_{H^{3+\delta}}^2\|Q\|_{H^{3+\delta}}.
\end{aligned}
\end{equation}
\paragraph*{Control of $I_1$:  }
We integrate $\partial_{\mu}$ by parts to get: 
\begin{equation}\label{I10}
\begin{aligned}
I_1&=-\int_{\Omega}Sv^{\alpha}a^{\mu}_{\alpha}(\p_{\mu}SQ)dy \\
&=\int_{\Omega}a^{\mu}_{\alpha}S\p_{\mu}v^{\alpha}(SQ)dy+\underbrace{\int_{\Gamma_0}(SQ)(a_{\alpha}^{3}Sv^{\alpha}) dS(\Gamma_0)}_{=0}-\int_{\Gamma_1}(\underbrace{SQ}_{=0})(a^{\mu}_{\alpha}Sv^{\alpha}N_{\mu}) dS(\Gamma_1)\\
&=\int_{\Omega}\underbrace{S(a^{\mu}_{\alpha}\p_{\mu}v^{\alpha})}_{=0}(SQ)dy-\int_{\Omega}(Sa^{\mu}_{\alpha})\p_{\mu}v^{\alpha}(SQ)dy-\int_{\Omega}[S(a^{\mu}_{\alpha}\p_{\mu}v^{\alpha})-(Sa^{\mu}_{\alpha})\p_{\mu}v^{\alpha}-a^{\mu}_{\alpha}S\p_{\mu}v^{\alpha}](SQ)dy,
\end{aligned}
\end{equation}
where the boundary integrals in the second line vanish since $a^3_1=a^3_2=0$ and $v_3=0$ on $\Gamma_0$, and $\TP Q=\TP(c^2/2)=0$ on $\Gamma_1$. 
The last term in the third line is controlled using \eqref{kato3}: 
\begin{equation}
\begin{aligned}
&~~~~-\int_{\Omega}[S(a^{\mu}_{\alpha}\p_{\mu}v^{\alpha})-(Sa^{\mu}_{\alpha})\p_{\mu}v^{\alpha}-a^{\mu}_{\alpha}S\p_{\mu}v^{\alpha}](SQ)dy \\
&\lesssim (\|a^{\mu}_{\alpha}\|_{W^{1.5+\delta,3}}\|\p_{\mu}v^{\alpha}\|_{W^{1,3}}+\|a^{\mu}_{\alpha}\|_{W^{1,6}}\|\p_{\mu}v^{\alpha}\|_{H^{1.5+\delta}})\|SQ\|_{L^3} \\
&\lesssim\|Q\|_{H^{3+\delta}}\|a\|_{H^{2+\delta}}\|v\|_{H^{2.5+\delta}}\lesssim\|Q\|_{H^{3+\delta}}\|\eta\|_{H^{3+\delta}}^2\|v\|_{H^{2.5+\delta}}.
\end{aligned}
\end{equation}
For the second term in the last line of \eqref{I10}, we need to integrate $1/2$-tangential derivatives by parts and then apply  \eqref{product}: 
\begin{equation}
\begin{aligned}
-\int_{\Omega}Sa^{\mu}_{\alpha}\p_{\mu}v^{\alpha}SQdy&=\int_{\Omega}\TP^{2+\delta}a^{\mu}_{\alpha} \TP^{0.5}(SQ\p_{\mu}v^{\alpha})\\
&\lesssim\|a\|_{H^{2+\delta}}(\|SQ\|_{H^{0.5}}\|\p_{\mu}v^{\alpha}\|_{L^{\infty}}+\|SQ\|_{L^3}\|\p_{\mu}v^{\alpha}\|_{W^{0.5,6}}) \\
&\lesssim\|\eta\|_{H^{3+\delta}}^2\|Q\|_{H^{3+\delta}}\|v\|_{H^{2.5+\delta}}.
\end{aligned}
\end{equation}
Summing these up, we have: 
\begin{equation}\label{I1}
I_1\lesssim\|\eta\|_{H^{3+\delta}}^2\|Q\|_{H^{3+\delta}}\|v\|_{H^{2.5+\delta}}.
\end{equation}

\paragraph*{Control of $I_2$:  } Let $S_m: =-(I-\TL)^{0.25+0.5\delta}\p_m$. Then one may decompose $S$ as: 
\begin{equation}\label{sm}
\begin{aligned}
S&=((I-\TL)^{1.25+0.5\delta}-(I-\TL)^{0.25+0.5\delta})+\underbrace{(I-\TL)^{0.25+0.5\delta}}_{=: S_0}\\
&=(I-\TL)^{0.25+0.5\delta}(-\TL)+S_0\\
&=: \sum_{m=1}^2S_m\p_m+S_0.
\end{aligned}
\end{equation}
 Plugging this decomposition and the identity (which is obtained by differentiating $a: \partial\eta=I$)
\begin{equation}\label{aeta}
\p_m a_{\alpha}^\mu=-a^{\mu}_{\nu}\p_{\beta}\p_m \eta^\nu a^{\beta}_{\alpha}
\end{equation}
into $I_2$, we have: 
\begin{equation}\label{I20}
\begin{aligned}
I_{2}&=-\sum_{m=1}^2\int_{\Omega}(S v^{\alpha})(S_m\p_ma^{\mu}_{\alpha})(\p_{\mu}Q)dy\underbrace{-\int_{\Omega}(Sv^{\alpha})S_0a^{\mu}_{\alpha}\partial_{\mu}Q dy}_{R_1}\\
&=\sum_{m=1}^2\int_{\Omega}(S v^{\alpha})S_m(a^{\mu}_{\nu}\p_{\beta}\p_m\eta^{\nu}a^{\beta}_{\alpha})\p_{\mu}Q dy +R_1\\
&=\underbrace{\sum_{m=1}^2\int_{\Omega}(S v^{\alpha})(S_m\p_{\beta}\p_m\eta^{\nu})(a^{\mu}_{\nu}a^{\beta}_{\alpha})\p_{\mu}Q dy}_{I_{21}}+\\&\int_{\Omega}(S v^{\alpha})[S_m(a^{\mu}_{\nu}\p_{\beta}\p_m\eta^{\nu}a^{\beta}_{\alpha})-(S_m\p_{\beta}\p_m\eta^{\nu})(a^{\mu}_{\nu}a^{\beta}_{\alpha})]\p_{\mu}Qdy+R_1 
\end{aligned}
\end{equation}
Here, $R_1$ is bounded by $P(\|\eta\|_{H^{2.5+\delta}})\|Q\|_{H^{1.5}}\|v\|_{H^{2.5+\delta}}$ via the multiplicative Sobolev inequality, while the last term in the third line of \eqref{I20} can be controlled by using Kato-Ponce inequality \eqref{kato2} as: 
\begin{equation}
\begin{aligned}
&~~~~\int_{\Omega}(S v^{\alpha})[S_m(a^{\mu}_{\nu}\p_{\beta}\p_m\eta^{\nu}a^{\beta}_{\alpha})-(S_m\p_{\beta}\p_m\eta^{\nu})(a^{\mu}_{\nu}a^{\beta}_{\alpha})]\p_{\mu}Qdy \\
&\lesssim (\|a^{\mu}_{\nu}a^{\beta}_{\alpha}\|_{W^{1,6}}\|\p_{\beta}\p_m\eta^{\nu}\|_{W^{0.5+\delta,3}}+\|\p_{\beta}\p_m\eta^{\nu}\|_{L^6}\|a^{\mu}_{\nu}a^{\beta}_{\alpha}\|_{W^{1.5+\delta,3}})\|\p_{\mu}Q\|_{L^{\infty}}\|Sv^{\alpha}\|_{L^2} \\
&\lesssim \|a\|_{H^{2+\delta}}\|a\|_{H^{1.5+\delta}}\|\eta\|_{H^{3+\delta}}\|Q\|_{H^{2.5+\delta}}\|v\|_{H^{2.5+\delta}} 
\lesssim P(\|\eta\|_{H^{3+\delta}})\|Q\|_{H^{2.5+\delta}}\|v\|_{H^{2.5+\delta}}.
\end{aligned}
\end{equation} 

It remains to control $I_{21}$. Writing ${\sum}_{m=1}^2S_m\p_m = S-S_0$, we have: 
\begin{equation}\label{I210}
I_{21}=\int_{\Omega}(S v^{\alpha})(S\p_{\beta}\eta^{\nu})(a^{\mu}_{\nu}a^{\beta}_{\alpha})(\p_{\mu}Q) dy-\int_{\Omega}(S v^{\alpha})(S_0\p_{\beta}\eta^{\nu})(a^{\mu}_{\nu}a^{\beta}_{\alpha})(\p_{\mu}Q)dy.
\end{equation}
It is easy to see the second term in \eqref{I210} can be bounded by $\|v\|_{H^{2.5+\delta}}\|Q\|_{H^{1.5}} P(\|\eta\|_{H^{2.5+\delta}})$. For the first term, we integrate $\p_\beta $ by parts to obtain: 
\begin{equation}
\begin{aligned}\label{I21}
I_{21}
&=\underbrace{-\int_{\Omega}(\p_{\beta}S v^{\alpha})(S\eta^{\nu})(a^{\mu}_{\nu}a^{\beta}_{\alpha})(\partial_{\mu}Q) dy}_{I_{211}} -\int_{\Omega}(S v^{\alpha})(S\eta^{\nu})(\p_{\beta}a^{\mu}_{\nu})a^{\beta}_{\alpha}(\p_{\mu}Q) dy \\
&~~~~-\int_{\Omega}(S v^{\alpha})(S\eta^{\nu})(a^{\mu}_{\nu}a^{\beta}_{\alpha})(\p_{\beta}\p_{\mu}Q) dy+\underbrace{\int_{\Gamma_0}(Sv^{\alpha})(S\eta^{\nu})a^{\mu}_{\nu}a^{\beta}_{\alpha}(\p_{\mu}Q)N_{\beta}dS(\Gamma_0)}_{=0} \\
&~~~~+\underbrace{\int_{\Gamma_1}(Sv^{\alpha})(S\eta^{\nu})a^{\mu}_{\nu}a^{\beta}_{\alpha}(\partial_{\mu}Q)N_{\beta}dS(\Gamma_1)}_{I_{212}} + R_2\\
&\lesssim I_{211}+\|\p a\|_{L^6}\|S\eta\|_{L^3}\|a\|_{L^{\infty}}\|\p Q\|_{L^{\infty}}\|Sv\|_{L^2}+\|a\|_{L^{\infty}}\|S\eta\|_{L^3}\|a\|_{L^{\infty}}\|\p^2Q\|_{L^6}\|Sv\|_{L^2}\\
&~~~~+I_{212}+\|a\|_{L^6}\|\eta\|_{H^{2.5+\delta}}\|a\|_{L^{\infty}}\|\p Q\|_{L^{\infty}}\|Sv\|_{L^2} \\
&\lesssim I_{211}+I_{212}+\PP+P(\|Q\|_{H^3}),
\end{aligned}
\end{equation}
where the integral on $\Gamma_0$ vanishes because $N=(0,0,-1)$ and $a^3_1=a^3_2=0$ on $\Gamma_0$.

Now, we bound $I_{211}$ by the Kato-Ponce commutator estimate \eqref{kato3}, because we want to move the derivatives on $v$ to $a$ in order to control $v$.

\begin{equation}\label{I2110}
\begin{aligned}
I_{211}&=-\int_{\Omega}(S\p_{\beta} v^{\alpha}a^{\beta}_{\alpha})(a^{\mu}_{\nu}S\eta^{\nu})(\p_{\mu}Q) dy \\
&=\int_{\Omega}(\p_{\beta} v^{\alpha})Sa^{\beta}_{\alpha}(a^{\mu}_{\nu}S\eta^{\nu})(\p_{\mu}Q) dy \\
&~~~~+\int_{\Omega}(a^{\mu}_{\nu}S\eta^{\nu}\partial_{\mu} Q)[S(a^{\beta}_{\alpha}\partial_{\beta}v^{\alpha})-(Sa^{\beta}_{\alpha})\p_{\beta} v^{\alpha}-a^{\beta}_{\alpha}S(\partial_{\beta} v^{\alpha})] dy.
\end{aligned}
\end{equation}
The term on the second line of \eqref{I2110} is controlled by \eqref{product} after integrating $0.5$ derivatives by parts, i.e.,
\begin{equation}
\begin{aligned}
\int_{\Omega}(\p_{\beta} v^{\alpha})Sa^{\beta}_{\alpha}(a^{\mu}_{\nu}S\eta^{\nu})(\p_{\mu}Q) dy&=\int_{\Omega}\TP^{1/2}(S\eta^{\nu}a^{\mu}_{\nu}\p_{\mu}Q\p_{\beta} v^{\alpha}) \TP^{2+\delta}a^{\beta}_{\alpha}dy \\
&\lesssim \|a\|_{H^{2+\delta}}\|\TP^{1/2}(S\eta^{\nu}a^{\mu}_{\nu}\p_{\mu}Q\p_{\beta} v^{\alpha})\|_{L^2} \\
&\lesssim \|a\|_{H^{2+\delta}}(\|a S\eta\|_{L^3}\|\p Q\p v\|_{W^{1/2,6}}+\|a S\eta\|_{H^{1/2}}\|\p Q\p v\|_{L^{\infty}}) \\
&\lesssim P(\|\eta\|_{H^{3+\delta}})\|v\|_{H^{2.5+\delta}}\|Q\|_{H^{2.5+\delta}}
\end{aligned}
\end{equation}
In addition, we apply \eqref{kato3} to the term on the third line of \eqref{I2110} and get: 
\begin{equation}
\begin{aligned}
&~~~~\int_{\Omega}(a^{\mu}_{\nu}S\eta^{\nu}\partial_{\mu} Q)[S(a^{\beta}_{\alpha}\partial_{\beta}v^{\alpha})-(Sa^{\beta}_{\alpha})\p_{\beta} v^{\alpha}-a^{\beta}_{\alpha}S(\partial_{\beta} v^{\alpha})] dy\\
&\lesssim \|a\|_{L^{\infty}}\|S\eta\|_{L^3}\|\p Q\|_{L^{\infty}}(\|a\|_{W^{1,6}}\|\p v\|_{H^{1.5+\delta}}+\|a\|_{W^{1.5+\delta,3}}\|\p v\|_{W^{1,3}})\\
&\lesssim  P(\|\eta\|_{H^{3+\delta}})\|v\|_{H^{2.5+\delta}}\|Q\|_{H^{2.5+\delta}}
\end{aligned}
\end{equation}
Therefore,
\begin{equation}\label{I211}
I_{211}\lesssim P(\|\eta\|_{H^{3+\delta}})\|v\|_{H^{2.5+\delta}}\|Q\|_{H^{2.5+\delta}}.
\end{equation}

Now we come to control $I_{212}$.  We shall compute its time integral, which then allows us to integrate $\partial_t$ by parts to eliminate $0.5$ more derivatives falling on $v$. 
Since $N=(0,0,1)$ and $Q=\frac{1}{2}c^2$ on $\Gamma_1$, we have $a_\alpha^\beta N_\beta=a_\alpha^3$ and $a_\nu^\mu \p_\mu Q= a_\nu^3 \p_3 Q$, and so: 

\begin{equation}
\begin{aligned}
\int_0^t I_{212}ds&=\int_0^t\int_{\Gamma_1}(\partial_t S \eta^{\alpha})(S\eta^{\nu})a^{3}_{\nu}a^{3}_{\alpha}(\partial_{3}Q)dS(\Gamma_1)ds \\
&=\frac{1}{2}\int_{\Gamma_1}\underbrace{(S\eta^{\alpha})(S\eta^{\nu})a^{3}_{\nu}a^{3}_{\alpha}}_{\geq 0}(\partial_{3}Q)dS(\Gamma_1) \Big|^{t}_0 \\\\
&~~~~-\int_0^t\int_{\Gamma_1}\partial_t a^{3}_{\alpha}a^3_{\nu}S\eta^{\alpha}S\eta^{\nu}\partial_3 Q dS(\Gamma_1)ds-\frac{1}{2}\int_0^t\int_{\Gamma_1}a^{3}_{\nu}a^{3}_{\alpha}S\eta^{\alpha}S\eta^{\nu}\partial_3Q_t dS(\Gamma_1)ds\\
\end{aligned}
\end{equation}
Invoking the physical sign condition $\partial_3 Q\leq -\epsilon_0$ and Sobolev trace lemma, we have: 
\begin{equation}
\begin{aligned}\label{I212}
\int_0^t I_{212}ds&\leq  -\frac{\epsilon_0}{2}\int_{\Gamma_1}(S\eta^{\alpha})(S\eta^{\nu})a^{3}_{\nu}a^{3}_{\alpha}dS(\Gamma_1)\Big|^{t}_0 \\
&~~~~+\int_0^t \|a_t\|_{H^{1.5+\delta}}\|a\|_{H^{1.5+\delta}}\|\eta\|_{H^{3+\delta}}^2\|Q\|_{H^{2.5+\delta}}ds\\
&~~~~+\int_0^t \|a\|_{H^{1.5+\delta}}^2\|\eta\|_{H^{3+\delta}}^2\|Q_t\|_{H^{2.5+\delta}}ds \\
&\leq -\frac{\epsilon_0}{2}\|a^3_{\alpha}S\eta^{\alpha}\|_{L^2(\Gamma_1)}^2\\
& +P(\|v_0\|_{H^{2.5+\delta}},\|b_0\|_{H^{2.5+\delta}})+\int_0^t  P(\|\eta\|_{H^{3+\delta}},\|Q\|_{H^{2.5+\delta}},\|Q_t\|_{H^{2.5+\delta}}) ds.
\end{aligned}
\end{equation}

Summing up \eqref{I}, \eqref{I1}, \eqref{I20}, \eqref{I21}, \eqref{I211}, \eqref{I212}, we obtain: 
\begin{equation} \label{II}
\int_0^t I(s) ds+\frac{\epsilon_0}{2}\|a^3_{\alpha}S\eta^{\alpha}\|_{L^2(\Gamma_1)}^2\lesssim\PP_0+\int_0^t\PP+\int_0^t P(\|\eta\|_{H^{3+\delta}},\|Q\|_{H^{3+\delta}},\|Q_t\|_{H^{2.5+\delta}})ds.
\end{equation} 

\paragraph*{Control of $J$:  }Now we start to control $J$. We first plug the identity \eqref{GW1} into $J$, then write $J$ to be the sum of the highest order term and the commutator, which again can be controlled by Kato-Ponce inequality \eqref{KATO}

\begin{equation}\label{J}
\begin{aligned}
J&=\int_{\Omega}(Sv^{\alpha})(S(b_{\beta}a^{\mu\beta}\partial_{\mu}b_{\alpha}))dy= \int_{\Omega}(Sv^{\alpha})(S(b_0^{\mu}\p_{\mu}b_{\alpha}))dy \\
&=\underbrace{\int_{\Omega}(Sv^{\alpha})b_0^{\mu}S\partial_{\mu}b_{\alpha} dy}_{J_1}+\int_{\Omega}Sv^{\alpha}[S(b_0^{\mu}\partial_{\mu}b_{\alpha})-b_0^{\mu}S\partial_{\mu}b_{\alpha}S\partial_{\mu}b_{\alpha}] dy \\
&\lesssim J_1+\|v\|_{H^{2.5+\delta}}(\|\p b_0^{\mu}\|_{L^{\infty}}\|\TP^{1.5+\delta}\p_{\mu}b_{\alpha}\|_{L^2}+\|Sb_0^{\mu}\|
_{L^2}\|\p_{\mu}b_{\alpha}\|_{L^{\infty}}) \\
&\lesssim J_1+\|v\|_{H^{2.5+\delta}}\|b_0\|_{H^{2.5+\delta}}\|b\|_{H^{2.5+\delta}}.
\end{aligned}
\end{equation}

The term $J_1$ cannot be controlled directly, but it actually cancels with the highest order term in the energy of $b$. We will see that in the next step.

\subsection{Tangential estimates of $b$}
We derive the tangential estimates of $b$ in this subsection and then conclude the tangential energy estimates. Taking the time derivative of $\frac{1}{2}\|S b\|_{L^2}^2 $ and invoking the identity \eqref{GW1} and Kato-Ponce inequality \eqref{kato2}, we have: 
\begin{equation}\label{K}
\begin{aligned}
\frac{1}{2}\frac{d}{dt}\|S b\|_{L^2}^2 &=\int_{\Omega}(S b_{\alpha})S(b_{\beta}a^{\mu\beta}\partial_{\mu}v^{\alpha}) dy= \int_{\Omega}(S b_{\alpha})S(b_0^{\mu}\partial_{\mu}v^{\alpha}) dy\\
&=\underbrace{\int_{\Omega}(S b_{\alpha})b_0^{\mu}(S\partial_{\mu}v^{\alpha}) dy}_{K_1}+\int_{\Omega}S b_{\alpha}[S(b_0^{\mu}\partial_{\mu}v^{\alpha})-b_0^{\mu}(S\partial_{\mu}v^{\alpha}) ] dy\\
&\lesssim K_1+\|v\|_{H^{2.5+\delta}}\|b_0\|_{H^{2.5+\delta}}\|b\|_{H^{2.5+\delta}}.
\end{aligned}
\end{equation}

Now we are able to see that $J_1$ cancels $K_1$:  Integrating $\partial_{\mu}$ in $J_1+K_1$ by parts, we have
\begin{equation}\label{J1K1}
\begin{aligned}
J_1+K_1&=\int_{\Omega}(Sv^{\alpha})b_0^{\mu}S\partial_{\mu}b_{\alpha} dy+\int_{\Omega}(S b_{\alpha})b_0^{\mu}S\partial_{\mu}v^{\alpha} dy \\
&=\int_{\Omega}\partial_{\mu}(Sv^{\alpha}S b_{\alpha})b_0^{\mu} dy \\
&=-\int_{\Omega}Sv^{\alpha}Sb_{\alpha}\underbrace{\partial_{\mu}b_0^{\mu}}_{\dive b_0=0} dy+\int_{\partial\Omega}Sv^{\alpha}S b_{\alpha}\underbrace{b_{\beta}a^{\mu\beta} N_{\mu}}_{B\cdot N=0}dS(y)=0.
\end{aligned}
\end{equation}

Combining \eqref{V}, \eqref{II}, \eqref{J}, \eqref{K}, \eqref{J1K1}, we derive the tangential estimate as follows: 
\begin{equation}\label{T}
\begin{aligned}
&~~~~\|Sv\|_{L^2}^2+\|Sb\|_{L^2}^2+\frac{\epsilon_0}{2}\|a^3_{\alpha}S\eta^{\alpha}\|_{L^2(\Gamma_1)}^2 \\
&\lesssim P(\|v_0\|_{H^{2.5+\delta}},\|b_0\|_{H^{2.5+\delta}})+\int_0^t P(\|\eta\|_{H^{3+\delta}},\|v\|_{H^{2.5+\delta}},\|b\|_{H^{2.5+\delta}},\|Q\|_{H^{3+\delta}},\|Q_t\|_{H^{2.5+\delta}})ds \\
&\lesssim \PP_0+\int_0^t\PP+\int_0^t P(\|Q\|_{H^{3+\delta}},\|Q_t\|_{H^{2.5+\delta}},\|\eta\|_{H^{3+\delta}})ds
\end{aligned}
\end{equation} which implies in \eqref{T0}.

\begin{flushright}
$\square$
\end{flushright}
\section{Closing the estimates}
\label{section 5}
In this section we close our a priori estimate and prove the physical sign condition can be propagated to a positive time if holds for the initial data. 

\subsection{The div-curl type estimates}
\paragraph*{$H^{2.5+\delta}$-estimate of $v$ and $b$: }
In this subsection we do the div-curl type estimate of $v$ and $b$ to derive the control of full $H^{2.5+\delta}$ norms. Although for Euler equations one can use the Cauchy invariance to give linear estimates for $\curl v$ and $\dive v$,  there is no such analogue for MHD equations. Instead, inspired by Gu-Wang \cite{gu2016construction}, we can derive the evolution equations of $\curl v$  to control the $\curl v$ and $\curl b$ simultaneously thanks to the identity $b=(b_0\cdot\p)\eta$.  Then we apply the div-curl estimate to derive the control of full $H^{2.5+\delta}$ norms of $v$ and $b$. 

The following notations will be adopted throughout: 
\nota Let $X=(X^1,X^2,X^3)$ be a vector field. We denote the ``curl operator" and the ``div operator" in the Eulerian coordinate by 
$$(B_a X)_{\lambda}=\epsilon_{\lambda\tau\alpha}a^{\mu\tau}\p_{\mu}X^{\alpha},\q\text{and}\,\,A_a X=a^{\mu}_{\alpha}\partial_{\mu} X^{\alpha},$$
respectively, where $\epsilon_{\lambda\tau\alpha}$ is the sign of the permutation $(\lambda\tau\alpha)\in S_3$.

\begin{prop}\label{divcurl} For sufficiently small $T>0$, the following estimates hold: 
\begin{equation}
\begin{aligned}
\|\curl v\|_{H^{1.5+\delta}}+\|\curl b\|_{H^{1.5+\delta}}&\lesssim\epsilon(\| v\|_{H^{2.5+\delta}}+\|b\|_{H^{2.5+\delta}}) +\PP_0+\int_0^t \PP ; \\
\|\dive v\|_{H^{1.5+\delta}}+\|\dive b\|_{H^{1.5+\delta}}&\lesssim\epsilon(\| v\|_{H^{2.5+\delta}}+\|b\|_{H^{2.5+\delta}}),
\end{aligned}
\end{equation}
whenever $t\in [0,T]$. 
\end{prop}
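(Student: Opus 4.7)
The estimate splits cleanly into divergence and curl parts. The divergence bound is essentially algebraic: since the Eulerian divergence $A_a v = a^\mu_\alpha \p_\mu v^\alpha$ vanishes identically by \eqref{MHDL}, one has
$$\dive v = \p_\alpha v^\alpha = (\delta^\mu_\alpha - a^\mu_\alpha)\p_\mu v^\alpha,$$
and Lemma \ref{estimatesofa}(7) together with the multiplicative Sobolev inequality \eqref{multiplicative} gives
$$\|\dive v\|_{H^{1.5+\delta}}\lesssim \|I-a\|_{H^{1.5+\delta}}\|\p v\|_{H^{1.5+\delta}}\lesssim \epsilon \|v\|_{H^{2.5+\delta}}.$$
The identical argument using $A_a b = 0$ controls $\|\dive b\|_{H^{1.5+\delta}}$.

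\textbf{Curl reduction.} For the curl, I would first reduce the Lagrangian curl to the Eulerian curl operator $B_a$ by writing
$$(\curl v)_\lambda - (B_a v)_\lambda = \epsilon_{\lambda\tau\alpha}(\delta^{\mu\tau}-a^{\mu\tau})\p_\mu v^\alpha,$$
whose $H^{1.5+\delta}$-norm is again $\lesssim \epsilon\|v\|_{H^{2.5+\delta}}$ by Lemma \ref{estimatesofa}(7) and \eqref{multiplicative}, and analogously for $b$. It then suffices to bound $\|B_a v\|_{H^{1.5+\delta}}$ and $\|B_a b\|_{H^{1.5+\delta}}$ by $\PP_0 + \int_0^t \PP\,ds$. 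To do this, I exploit the Gu--Wang identities $b_\beta a^{\mu\beta}=b_0^\mu$ and $b=(b_0\cdot\p)\eta$ from Lemma \ref{GW} to recast the momentum equation \eqref{re-write} as
$$\p_t v_\alpha + a^\mu_\alpha \p_\mu Q = (b_0\cdot\p)^2\eta_\alpha.$$
Applying $B_a$ to both sides, the gradient term $B_a(a\nabla Q)$ collapses to a commutator of schematic form $\p a\cdot\p Q$ (since an Eulerian curl of an Eulerian gradient is zero up to $\p a$ terms), while on the right one recognises $B_a(b_0\cdot\p)^2\eta = (b_0\cdot\p)B_a(b_0\cdot\p)\eta + [B_a,b_0\cdot\p](b_0\cdot\p)\eta$. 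Similarly, $\p_t B_a b = \p_t B_a(b_0\cdot\p)\eta = (b_0\cdot\p)B_a v + \mathcal R_b$ since $b_0$ is time-independent. This produces the coupled system
$$\p_t (B_a v)_\lambda = (b_0\cdot\p)(B_a b)_\lambda + \mathcal R_v, \q \p_t (B_a b)_\lambda = (b_0\cdot\p)(B_a v)_\lambda + \mathcal R_b,$$
with remainders whose $H^{1.5+\delta}$-norms are controlled by $\PP$ via Kato--Ponce (Lemma \ref{KatoPonce}) and $\|a\|_{H^{1.5+\delta}}\lesssim 1$.

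\textbf{Energy identity and main obstacle.} Applying $\cp^{1.5+\delta}$ to both identities, testing against $\cp^{1.5+\delta}B_a v$ and $\cp^{1.5+\delta}B_a b$ respectively and summing gives the top-order cross contribution
$$\int_\Omega \cp^{1.5+\delta}(B_a v)\cdot\cp^{1.5+\delta}\bigl((b_0\cdot\p)B_a b\bigr) +\int_\Omega \cp^{1.5+\delta}(B_a b)\cdot\cp^{1.5+\delta}\bigl((b_0\cdot\p)B_a v\bigr).$$
After commuting $b_0\cdot\p$ past $\cp^{1.5+\delta}$ (producing a Kato--Ponce commutator bounded by $\|b_0\|_{H^{2.5+\delta}}\PP$) and integrating by parts, this pair collapses to a boundary term that vanishes by $b_0\cdot N = 0$ on $\p\Omega$, plus an interior term that vanishes by $\di b_0 = 0$. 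The remaining terms involve only $\mathcal R_v, \mathcal R_b$ and the tangential commutators; integrating in time yields
$$\|B_a v\|_{H^{1.5+\delta}}^2 + \|B_a b\|_{H^{1.5+\delta}}^2 \lesssim \PP_0 + \int_0^t \PP\,ds,$$
which, combined with the Lagrangian--Eulerian reduction, completes the proof. I expect the main difficulty to lie in the commutator estimates $[\cp^{1.5+\delta},b_0\cdot\p]$ and $[B_a,b_0\cdot\p]$ at the top tangential regularity: the key point is that $\p a\in H^{0.5+\delta}\subset L^p$ for some $p>2$ and $b_0\in H^{2.5+\delta}$ is merely initial data absorbed into $\PP_0$, so the Kato--Ponce estimates \eqref{kato2}--\eqref{kato3} can be applied without loss.
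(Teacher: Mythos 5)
Your overall strategy tracks the paper's closely: the divergence bound via the algebraic constraint $A_a v = A_a b = 0$, the reduction of the Lagrangian curl to $B_a v$ and $B_a b$ using Lemma~\ref{estimatesofa}(7), the use of the Gu--Wang identities \eqref{GW1}--\eqref{GW2} to recast the momentum equation as $\p_t v_\alpha + a^\mu_\alpha\p_\mu Q = (b_0\cdot\p)^2\eta_\alpha$, and the cross-cancellation in the final energy identity via $\dive b_0 = 0$ and $b_0\cdot N = 0$ are all exactly as in the paper. Writing two coupled evolution equations (one for $B_a v$, one for $B_a b$) and summing the two tested estimates is an equivalent presentation of what the paper obtains by forming only the $B_a v$ equation and integrating $b_0\cdot\p$ by parts after commutation.

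There is, however, one substantive gap. You assert that the gradient term $B_a(a^\mu_\alpha\p_\mu Q)$ ``collapses to a commutator of schematic form $\p a\cdot\p Q$,'' and you include this residual among the remainders whose $H^{1.5+\delta}$-norm is ``controlled by $\PP$.'' In fact $B_a(a^\nu_\alpha\p_\nu Q)$ vanishes \emph{identically}, not just up to $\p a$ corrections: the top-order term $\epsilon_{\lambda\tau\alpha}a^{\mu\tau}a^{\nu\alpha}\p_\mu\p_\nu Q$ dies because the contraction of the symmetric Hessian $\p_\mu\p_\nu Q$ with $\epsilon_{\lambda\tau\alpha}$ is zero (swap $(\mu,\tau)\leftrightarrow(\nu,\alpha)$), and the lower-order term $\epsilon_{\lambda\tau\alpha}a^{\mu\tau}(\p_\mu a^{\nu\alpha})\p_\nu Q$ dies for the same reason once you insert $\p_\mu a^{\nu\alpha}=-a^\nu_\gamma\,\p_\beta\p_\mu\eta^\gamma\, a^{\beta\alpha}$ from Lemma~\ref{estimatesofa}(8). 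This exact vanishing is what makes the evolution equation \eqref{curl_e}, and hence the whole estimate, close with only $\PP_0+\int_0^t\PP$ on the right: had a genuine residual of schematic type $\p a\cdot\p Q$ been present, bounding its $H^{1.5+\delta}$-norm would require either $\p a\in H^{1.5+\delta}$ (i.e.\ $\eta\in H^{3.5+\delta}$, which is not controlled anywhere in the paper) or $\p Q$ in a space strictly above what Section~\ref{section 3} provides, and in any case would not factor through $\PP = P(\|v\|_{H^{2.5+\delta}},\|b\|_{H^{2.5+\delta}})$. So you cannot simply absorb that ``commutator'' by Kato--Ponce as you suggest; the step must be stated as an identity.
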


\begin{proof} The divergence estimates are easy because $A_av=0$ and $A_a b=0$, so: 
$$\|\dive v\|_{H^{1.5+\delta}}=\|\underbrace{A_a v}_{=0}+(A_I-A_a)v\|_{H^{1.5+\delta}}\lesssim \epsilon\|v\|_{H^{2.5+\delta}};$$
$$\|\dive b\|_{H^{1.5+\delta}}=\|\underbrace{A_a b}_{=0}+(A_I-A_a)b\|_{H^{1.5+\delta}}\lesssim \epsilon\|b\|_{H^{2.5+\delta}}.$$

The estimates for $\|\curl v\|_{H^{1.5+\delta}}$ and $\|\curl b\|_{H^{1.5+\delta}}$ are more dedicate. Since
\begin{equation}\label{curl0}
\begin{aligned}
&~~~~\|\curl v\|_{H^{1.5+\delta}}+\|\curl b\|_{H^{1.5+\delta}}\\
&\leq \|(B_I-B_a)v\|_{H^{1.5+\delta}}+\|(B_I-B_a)b\|_{H^{1.5+\delta}}+\|B_av\|_{H^{1.5+\delta}}+\|B_a b\|_{H^{1.5+\delta}}\\
& \lesssim\epsilon(\| v\|_{H^{2.5+\delta}}+\|b\|_{H^{2.5+\delta}})+\|B_av\|_{H^{1.5+\delta}}+\|B_a b\|_{H^{1.5+\delta}},
\end{aligned}
\end{equation} 
and so it suffices to control $\|B_av\|_{H^{1.5+\delta}}$ and $\|B_a b\|_{H^{1.5+\delta}} $. As mentioned in the beginning of this subsection, we will derive the evolution equation for $B_a v$ and $B_a b$:  Plugging $b_{\beta}a^{\mu\beta}=b_0^{\mu}$ and $b_{\alpha}=(b_0\cdot\p)\eta$ in the first equation of \eqref{MHDL}, and then applying the $\curl$operator $B_a$ on both sides, we have: 
\begin{equation}\label{curl1}
(B_a\p_tv)_{\lambda}=(B_a((b_0\cdot\p)^2\eta))_{\lambda}.
\end{equation}
Commuting $\p_t$ and $b_0\cdot\p$ with $B_a$ on both sides of \eqref{curl1}, we have: 
\begin{equation}
\p_t(B_a v)_{\lambda}-(b_0\cdot\p)B_a((b_0\cdot\p)\eta)_{\lambda}=\epsilon_{\lambda\tau\alpha}\p_ta^{\mu\tau}\p_{\mu}v^{\alpha}+[B_a,b_0\cdot\p]((b_0\cdot\p)\eta)_{\lambda}.
\end{equation}

Taking $\p^{1.5+\delta}$ derivatives, and then commuting it with $\p_t$ and $b_0\cdot\p$, respectively, we get the evolution equation of $B_a v$: 
\begin{equation}\label{curl2}
\p_t(\p^{1.5+\delta}B_a v)_{\lambda}-(b_0\cdot\p)(\p^{1.5+\delta}B_a(b_0\cdot\p)\eta)_{\lambda}=F_{\lambda},
\end{equation}
where 
\begin{equation}\label{curlrhs}
F_{\lambda}=[\p^{1.5+\delta},b_0\cdot\p](B_a(b_0\cdot\p)\eta)_{\lambda}+\p^{1.5+\delta}(\epsilon_{\lambda\tau\alpha}\p_ta^{\mu\tau}\p_{\mu}v^{\alpha}+[B_a,b_0\cdot\p]((b_0\cdot\p)\eta)_{\lambda}).
\end{equation}
Taking the $L^2$ inner product of $\p^{1.5+\delta} B_av$ and \eqref{curl2}, we have: 
\[
\frac{1}{2}\frac{d}{dt}\int_{\Omega}|\p^{1.5+\delta} B_a v|^2dy-\int_{\Omega}\p^{1.5+\delta} B_a v\cdot (b_0^{\nu}\p_{\nu})(\p^{1.5+\delta}B_a (b_0\cdot\p)\eta)dy=\int_{\Omega}F\cdot \p^{1.5+\delta}B_a vdy.
\] 
Integrating $\p_\nu$ by parts in the second term on LHS, commuting $(b_0\cdot\p)$ with $\p^{1.5+\delta}B_a$ and then invoking $\p_t\eta=v$, we have: 
\begin{equation}
\begin{aligned}
\frac{1}{2}\frac{d}{dt}\int_{\Omega}|\p^{1.5+\delta} B_a v|^2+|\p^{1.5+\delta} B_a (b_0\cdot\p)\eta|^2dy
=\underbrace{\int_{\Omega}F\cdot \p^{1.5+\delta}B_a vdy}_{\mathcal{B}_1}\\
\underbrace{+\int_{\Omega}\p^{1.5+\delta} (B_a(b_0\cdot\p)\eta)\cdot [\p^{1.5+\delta}B_a,b_0\cdot\p]vdy }_{\mathcal{B}_2}\\
~~~~\underbrace{+\int_{\Omega}\p^{1.5+\delta}(B_a(b_0\cdot\p)\eta)^{\lambda}\p^{1.5+\delta}(\epsilon_{\lambda\tau\alpha}\p_ta^{\mu\tau}\p_{\mu}(b_0\cdot\p\eta^{\alpha}))dy}_{\mathcal{B}_3},
\end{aligned}
\end{equation}where the boundary term vanishes since $b_0\cdot N=0$ on the boundary.
The control of $\mathcal{B}_3$ is straightforward by the multiplicative Sobolev inequality, say,
\begin{equation}\label{B3}
\mathcal{B}_3\lesssim \|b\|_{H^{2.5+\delta}}^2\|a\|_{H^{1.5+\delta}}\|a_t\|_{H^{1.5+\delta}}\lesssim \|b\|_{H^{2.5+\delta}}^2\|v\|_{H^{2.5+\delta}}\|\eta\|_{H^{2.5+\delta}}^6.
\end{equation}
To control $\mathcal{B}_2$, it suffices to control $\|[\p^{1.5+\delta}B_a,b_0\cdot\p]v\|_{L^2}$. We simplify the commutator term as follows: 
\begin{equation}\label{B20}
\begin{aligned}
[\p^{1.5+\delta}B_a,b_0\cdot\p]v&=\epsilon_{\lambda\tau\alpha}\left(\p^{1.5+\delta}(a^{\mu\tau}\p_{\mu}(b_0^{\nu}\p_{\nu}v^{\alpha}))-b_0^{\nu}\p_{\nu}\p^{1.5+\delta}(a^{\mu\tau}\p_{\mu}v^{\alpha})\right) \\
&=\epsilon_{\lambda\tau\alpha}\underbrace{\left(\p^{1.5+\delta}(a^{\mu\tau}\p_{\mu}(b_0^{\nu}\p_{\nu}v^{\alpha}))-\p_{\nu}\p^{1.5+\delta}(b_0^{\nu}a^{\mu\tau}\p_{\mu}v^{\alpha})\right)}_{\mathcal{B}_{21}}  \\
&~~~~+\epsilon_{\lambda\tau\alpha}\underbrace{\left(\p_{\nu}\p^{1.5+\delta}(b_0^{\nu}a^{\mu\tau}\p_{\mu}v^{\alpha})-b_0^{\nu}\p_{\nu}\p^{1.5+\delta}(a^{\mu\tau}\p_{\mu}v^{\alpha})\right)}_{\mathcal{B}_{22}}.
\end{aligned}
\end{equation}
Invoking the Kato-Ponce commutator estimate \eqref{KATO}, we can control $\mathcal{B}_{22}$ as 
\begin{equation}\label{B22}
\begin{aligned}
&~~~~\|\p_{\nu}\p^{1.5+\delta}(b_0^{\nu}a^{\mu\tau}\p_{\mu}v_{\alpha})-b_0^{\nu}\p_{\nu}\p^{1.5+\delta}(a^{\mu\tau}\p_{\mu}v_{\alpha})\|_{L^2}\\
&\lesssim\|b_0\|_{H^{2.5+\delta}}\|a^{\mu\tau}\p_{\mu}v_{\alpha}\|_{L^{\infty}}+\|\p b_0\|_{L^{\infty}}\|a^{\mu\tau}\p_{\mu}v_{\alpha}\|_{H^{1.5+\delta}} \\
&\lesssim \|b_0\|_{H^{2.5+\delta}}\|v\|_{H^{2.5+\delta}}\|\eta\|_{H^{2.5+\delta}}^2.
\end{aligned}
\end{equation}

For $\mathcal{B}_{21}$, we have 
\begin{equation}\label{B210}
\begin{aligned}
\mathcal{B}_{21}&=\epsilon_{\lambda\tau\alpha}\p^{1.5+\delta}(a^{\mu\tau}\p_{\mu}(b_0^{\nu}\p_{\nu}v^{\alpha}))-\p_{\nu}(b_0^{\nu}a^{\mu\tau}\p_{\mu}v^{\alpha}))\\
&=\epsilon_{\lambda\tau\alpha}\p^{1.5+\delta}\left(a^{\mu\tau}\p_{\mu}b_0^{\nu}\p_{\nu}v^{\alpha}+a^{\mu\tau}b_0^{\nu}\p_{\mu}\p_{\nu}v^{\alpha}-b_0^{\nu}\p_{\nu}a^{\mu\tau}\p_{\mu}v^{\alpha}-b_0^{\nu}a^{\mu\tau}\p_{\mu}\p_{\nu}v^{\alpha}\right) \\
&=\epsilon_{\lambda\tau\alpha}\p^{1.5+\delta}\left(a^{\mu\tau}\p_{\mu}b_0^{\nu}\p_{\nu}v^{\alpha}+b_0^{\nu}\p_{\beta}\p_{\nu}\eta_{\gamma}a^{\mu\gamma}a^{\beta\tau}\p_{\mu}v^{\alpha} \right)\\
&=\epsilon_{\lambda\tau\alpha}\p^{1.5+\delta}(a^{\mu\tau}\p_{\mu}b_0^{\nu}\p_{\nu}v^{\alpha}+\p_{\beta}((b_0\cdot\p)\eta_{\gamma})a^{\mu\gamma}a^{\beta\tau}\p_{\mu}v^{\alpha}-\underbrace{\p_{\beta}b_0^{\nu}\p_{\nu}\eta_{\gamma}a^{\mu\gamma}a^{\beta\tau}\p_{\mu}v^{\alpha}}_{=\p_\beta b_0^\nu \delta_\nu^\mu a^{\beta\tau}\p_\mu v^\alpha} ),
\end{aligned}
\end{equation}where we used \eqref{aeta} to expand $b_0^\nu \p_\nu a^{\mu\tau}\p_\mu v^\alpha$ in the second line. 
Therefore, invoking $b=(b_0\cdot\p)\eta$ again, the $L^2$ norm of $\mathcal{B}_{21}$ can be controlled by the multiplicative Sobolev inequality: 
\begin{equation}\label{B21}
\begin{aligned}
\|\mathcal{B}_{21}\|_{L^2}&\lesssim\|a^{\mu\tau}\p_{\mu}b_0^{\nu}\p_{\nu}v_{\alpha}\|_{H^{1.5+\delta}}+\|\p_{\beta}b_{\gamma}a^{\mu\gamma}a^{\beta\tau}\p_{\mu}v_{\alpha}\|_{H^{1.5+\delta}}+\|\p_{\beta}b_0^{\mu}a^{\beta\tau}\p_{\mu}v_{\alpha}\|_{H^{1.5+\delta}} \\
&\lesssim P(\|\eta\|_{H^{2.5+\delta}})(\|b_0\|_{H^{2.5+\delta}}+\|b\|_{H^{2.5+\delta}})\|v\|_{H^{2.5+\delta}}.
\end{aligned}
\end{equation}

It remains to control $\mathcal{B}_1$, specifically, we need to bound $\|F\|_{L^2}$ given by \eqref{curlrhs}.
The first term is controlled by using Kato-Ponce commutator estimate \eqref{KATO}. Silimarly as in \eqref{B20}, we have
\begin{equation}\label{F1}
\begin{aligned}
\|[\p^{1.5+\delta},b_0\cdot\p](B_a(b_0\cdot\p)\eta)\|_{L^2}&=\|\p^{1.5+\delta}\p_{\nu}(b_0^{\nu}B_a b)-b_0\p^{1.5+\delta}\p_{\nu}B_ab\|_{L^2} \\
&\lesssim\|\p b_0\|_{L^{\infty}}\|B_a b\|_{H^{1.5+\delta}}+ \|b_0\|_{H^{2.5+\delta}}\|B_a b\|_{L^{\infty}}\\
&\lesssim P(\|\eta\|_{H^{2.5+\delta}})\|b_0\|_{H^{2.5+\delta}}\|v\|_{H^{2.5+\delta}}.
\end{aligned}
\end{equation}

For the commutator term in \eqref{curlrhs}, we can proceed similarly as in \eqref{B210} to get
\begin{equation}\label{F2}
\|[B_a,b_0\cdot\p]((b_0\cdot\p)\eta)\|_{H^{1.5+\delta}}\lesssim P(\|\eta\|_{H^{2.5+\delta}})\|b_0\|_{H^{2.5+\delta}}\|v\|_{H^{2.5+\delta}}.
\end{equation}The remaining term in $F$ can be easily bounded by $P(\|\eta\|_{H^{2.5+\delta}})\|b_0\|_{H^{2.5+\delta}}\|v\|_{H^{2.5+\delta}}$ via the multiplicative Sobolev inequality. 

Combining \eqref{B20}, \eqref{B22}, \eqref{B21}, \eqref{F1} and \eqref{F2}, we have 
\begin{equation}\label{curla}
\|B_a v\|_{H^{1.5+\delta}}+\|B_a b\|_{H^{1.5+\delta}}\lesssim\PP_0+\|b_0\|_{H^{2.5+\delta}}\int_0^t \PP~ .
\end{equation}

Therefore, invoking Lemma \ref{estimatesofa} (7), then absorbing the $\epsilon$-term to LHS, we ends the proof by: 
\begin{equation}\label{curl}
\begin{aligned}
&~~~~\|\curl v\|_{H^{1.5+\delta}}+\|\curl b\|_{H^{1.5+\delta}}\\
&\leq \|(B_I-B_a)v\|_{H^{1.5+\delta}}+\|(B_I-B_a)b\|_{H^{1.5+\delta}}+\|B_av\|_{H^{1.5+\delta}}+\|B_a b\|_{H^{1.5+\delta}} \\
& \lesssim\epsilon(\| v\|_{H^{2.5+\delta}}+\|b\|_{H^{2.5+\delta}})+\PP_0+\int_0^t \PP .
\end{aligned}
\end{equation}
\end{proof}

Now we can derive the estimate of full $H^{2.5+\delta}$ derivative estimate of $v$ and $b$. First applying Hodge's decomposition inequality, we get
\begin{equation}\label{vH2.50}
\|v\|_{H^{2.5+\delta}}\lesssim \|v\|_{L^2}+\|\curl v\|_{H^{1.5+\delta}}+\|\dive v\|_{H^{1.5+\delta}}+\|(\TP v)\cdot N\|_{H^{1+\delta}(\Gamma_1)},\\
\end{equation}
For the tangential term, we apply Sobolev trace lemma to get: 
\begin{equation}\label{tangentv}
\|\TP v\cdot N\|_{H^{1+\delta}(\Gamma_1)}\lesssim \|\TP^{1.5+\delta} v_3\|_{H^{0.5}(\Gamma_1)}\lesssim \|\TP^{1.5+\delta} \partial v_3\|_{L^2},
\end{equation}
where the last term in \eqref{tangentv} can be expressed using the tangential derivative of $v$ by: 
\begin{equation}
\partial_3v_3=\dive v-\partial_1v_1-\partial_2v_2=(A_I-A_a)v-\partial_1 v_1-\partial_2 v_2.
\end{equation}
Hence, 
\begin{equation}\label{tangentv2}
\|\TP^{1.5+\delta}\p v_3\|_{L^2}\leq \|\TP^{2.5+\delta} v\|_{L^2}+\|v\|_{H^{0.5}}+\epsilon\|v\|_{H^{2.5+\delta}},
\end{equation}

Combining  \eqref{divcurl} and \eqref{tangentv2}, and then absorbing $\epsilon\|v\|_{H^{2.5+\delta}}$ to the LHS,  we have : 
\begin{equation}\label{vH2.5}
\|v\|_{H^{2.5+\delta}}\lesssim \PP_0+\int_0^t \PP~ds+\|S v\|_{L^2}.
\end{equation}

The estimate of $\|b\|_{H^{2.5+\delta}}$ can be derived exactly in the same way as $\|v\|_{H^{2.5+\delta}}$, so we omit the details.
\begin{equation}\label{bH2.5}
\|b\|_{H^{2.5+\delta}}\lesssim \PP_0+\int_0^t \PP~ds+\|S b\|_{L^2}.
\end{equation}

In conclusion, we have proved
\begin{thm} The following estimates hold in a sufficiently short time interval $[0,T]$: 
\begin{equation}\label{vbH2.5}
\|v\|_{H^{2.5+\delta}}+\|b\|_{H^{2.5+\delta}}\lesssim \PP_0+\int_0^t \PP~ds+\|S v\|_{L^2}+\|S b\|_{L^2}.
\end{equation}
\end{thm}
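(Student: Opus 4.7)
The plan is to reduce the full Sobolev bound on $v$ and $b$ to the three quantities already controlled in the preceding sections: the curl (via Proposition \ref{divcurl}), the divergence (trivially, since $A_a v = A_a b = 0$), and the tangential component of the trace on $\Gamma_1$ (via the tangential operator $S$). The key tool is a Hodge-type decomposition estimate
\[
\|w\|_{H^{2.5+\delta}} \lesssim \|w\|_{L^2} + \|\curl w\|_{H^{1.5+\delta}} + \|\dive w\|_{H^{1.5+\delta}} + \|\cp w \cdot N\|_{H^{1+\delta}(\Gamma_1)},
\]
applied separately to $w = v$ and $w = b$. The contributions of the first three terms on the right are already handled by Proposition \ref{divcurl}, which yields
\[
\|\curl v\|_{H^{1.5+\delta}} + \|\curl b\|_{H^{1.5+\delta}} \lesssim \epsilon(\|v\|_{H^{2.5+\delta}} + \|b\|_{H^{2.5+\delta}}) + \PP_0 + \int_0^t \PP\, ds,
\]
together with the analogous $\epsilon$-small bound on the divergences coming from $A_a v = 0$ and $\|A_I - A_a\|_{H^{1.5+\delta}} \lesssim \epsilon$ (Lemma \ref{estimatesofa} (7)).

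The remaining piece is the tangential boundary term. I would first use the Sobolev trace inequality to bound
\[
\|\cp v \cdot N\|_{H^{1+\delta}(\Gamma_1)} \lesssim \|\cp^{1.5+\delta} v_3\|_{H^{0.5}(\Gamma_1)} \lesssim \|\cp^{1.5+\delta} \partial v_3\|_{L^2},
\]
and then exploit the fact that the normal derivative $\partial_3 v_3$ can be written in terms of purely tangential derivatives modulo a divergence correction:
\[
\partial_3 v_3 = (A_I - A_a) v - \partial_1 v_1 - \partial_2 v_2.
\]
This replaces the $\partial_3$ derivative on $v_3$ by a combination of $\cp$-derivatives and an $\epsilon$-small error $\|(A_I - A_a) v\|_{H^{1.5+\delta}} \lesssim \epsilon \|v\|_{H^{2.5+\delta}}$, so $\|\cp^{1.5+\delta} \partial v_3\|_{L^2} \lesssim \|S v\|_{L^2} + \|v\|_{H^{0.5}} + \epsilon \|v\|_{H^{2.5+\delta}}$. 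Identical reasoning applies to $b$, using the tangential structure of $b_0 \cdot N = 0$ and the divergence-free constraint $A_a b = 0$.

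Combining these three contributions and summing the resulting inequalities for $v$ and $b$, all the $\epsilon \|v\|_{H^{2.5+\delta}}$ and $\epsilon \|b\|_{H^{2.5+\delta}}$ terms can be absorbed into the left-hand side by choosing $\epsilon$ small (which is permissible by taking $T$ sufficiently small, per Lemma \ref{estimatesofa} (7)). The lower-order $L^2$ and $H^{0.5}$ norms of $v$ and $b$ are themselves bounded by $\PP_0 + \int_0^t \PP\, ds$ via a standard energy identity for the first two equations of \eqref{MHDL}. The main (but already addressed) obstacle was the control of $\|\curl v\|_{H^{1.5+\delta}}$ and $\|\curl b\|_{H^{1.5+\delta}}$, which required the coupled evolution equation for $B_a v$ and $B_a(b_0 \cdot \partial)\eta$ and is carried out in Proposition \ref{divcurl}; here the remaining work is purely the assembly via Hodge decomposition and the $\epsilon$-absorption argument.
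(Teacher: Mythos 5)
Your proposal matches the paper's proof essentially step-for-step: the Hodge-type decomposition \eqref{vH2.50}, the divergence control from $A_a v = A_a b = 0$ plus Lemma \ref{estimatesofa} (7), the curl control from Proposition \ref{divcurl}, the trace estimate combined with the rewriting $\p_3 v_3 = (A_I - A_a)v - \p_1 v_1 - \p_2 v_2$ (and its $b$ analogue), and the final absorption of the $\epsilon\|v\|_{H^{2.5+\delta}}, \epsilon\|b\|_{H^{2.5+\delta}}$ terms. The only small deviations are cosmetic: the lower-order norms $\|v\|_{L^2}, \|v\|_{H^{0.5}}$ are directly dominated by $\|Sv\|_{L^2}$ plus interpolation rather than requiring a separate energy identity, and the condition $b_0\cdot N = 0$ you invoke for $b$ is not actually needed in the trace step (only $A_a b = 0$ is used, just as for $v$).
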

\begin{flushright}
$\square$
\end{flushright}

\paragraph*{$H^{3+\delta}$-estimate of $\eta$: }
We derive the $H^{3+\delta}$ estimate for $\eta$ via the standard div-curl estimate: 
\begin{equation}\label{etaH3.50}
\|\eta\|_{H^{3+\delta}}\lesssim \|\eta\|_{L^2}+\|\curl \eta\|_{H^{2+\delta}}+\|\dive \eta\|_{H^{2+\delta}}+\|(\TP\eta)\cdot N\|_{H^{1.5+\delta}(\partial\Omega)}.
\end{equation}

The divergence part is easy to treat owing to the div-free condition $A_a v=0$, i.e., the Eulerian divergence of $v$ is identically zero.
\begin{equation}\label{diveta0}
\begin{aligned}
\|\dive \eta\|_{H^{2+\delta}}&\lesssim \|\dive \p\eta\|_{H^{1+\delta}}+\|\dive \eta\|_{H^{1+\delta}} \\
&\lesssim \|A_a\p\eta\|_{H^{1+\delta}}+\|(A_I-A_a)\p\eta\|_{H^{1+\delta}}+\|\eta\|_{H^{2+\delta}}\\
&\lesssim \|A_a\p\eta\|_{H^{1+\delta}}+\epsilon\|\eta\|_{H^{3+\delta}}+\|\eta(0)\|_{H^{2+\delta}}+\int_0^t\|v\|_{H^{2+\delta}}. \\
\end{aligned}
\end{equation}

Now it remains to control $A_a\p\eta$. We have: 
$$ A_a\p\eta(t)=A_a\p\eta(0)+\int_0^tA_{a_t}\p\eta+A_a\p v=\dive\p\eta(0)+\int_0^tA_{a_t}\p\eta+\underbrace{\p(A_av)}_{A_a v=0}-A_{\p a}v~ds.$$
Therefore, it can be controlled as
\begin{equation}
\begin{aligned}\label{diveta1}
\|A_a\p\eta(t)\|_{H^{1+\delta}}&\leq \|\dive\p\eta(0)\|_{H^{1+\delta}}+\int_0^t\|A_{a_t}\p\eta\|_{H^{1+\delta}}+\|A_{\p a}v\|_{H^{1+\delta}}ds\\
&\lesssim \|\eta(0)\|_{H^{3+\delta}}+\int_0^t\|a_t\|_{H^{1.5+\delta}}\|\eta\|_{H^{3+\delta}}+\|a\|_{H^{2+\delta}}\|v\|_{H^{2.5+\delta}}ds\\
&\lesssim \|\eta(0)\|_{H^{3+\delta}}+\int_0^t\|\eta\|_{H^{3+\delta}}\|v\|_{H^{2.5+\delta}}ds.
\end{aligned}
\end{equation}

Summing up \eqref{diveta0} and \eqref{diveta1}, then absorbing the $\epsilon$-term to LHS, we get the control of $\dive \eta$: 
\begin{equation}\label{diveta}
\|\dive \eta\|_{H^{2+\delta}}\lesssim  \|\eta(0)\|_{H^{3+\delta}}+\int_0^t P(\|\eta\|_{H^{3+\delta}},\|v\|_{H^{2.5+\delta}})ds.
\end{equation}

For the boundary estimate, we have: 
\begin{equation}
\begin{aligned}\label{bdryeta}
\|(\cp \eta)\cdot N\|_{H^{1.5+\delta}(\Gamma_1)}&\lesssim \|S\eta\cdot N\|_{L^2(\Gamma_1)}+\|\eta\cdot N\|_{H^{1.5+\delta}(\Gamma_1)}\\
&\lesssim \|a^{3}_\alpha S\eta^\alpha\|_{L^2(\Gamma_1)}+\|(\delta^3_\alpha-a^3_\alpha)S \eta^\alpha\|_{L^2(\Gamma_1)}+\|\eta\|_{H^{2+\delta}}\\
&\lesssim_{\epsilon_0} \frac{\epsilon_0}{2}\|a^3_{\alpha}S\eta^{\alpha}\|_{L^2(\Gamma_1)} + \epsilon \|\eta\|_{H^3}+\|\eta(0)\|_{H^2}+\int_0^t \|v\|_{H^2}.
\end{aligned}
\end{equation}
Here we remark that the term $\frac{\epsilon_0}{2}\|a^3_{\alpha}S\eta^{\alpha}\|_{L^2(\Gamma_1)}$ is exactly the boundary energy term derived from the physical sign condition in the tangential estimate.

It remains to control $\|\curl \eta\|_{H^{2+\delta}}$, we start with
\begin{equation}\label{curleta0}
\begin{aligned}
\|\curl\eta\|_{H^{2+\delta}}&\lesssim \|\curl\partial\eta\|_{H^{1+\delta}}+\|\curl\eta\|_{H^{1+\delta}}\\
&\leq \|B_a\partial\eta\|_{H^{1+\delta}}+\|(B_I-B_a)\partial\eta\|_{H^{1+\delta}}+\|\curl \eta\|_{H^{1+\delta}}.
\end{aligned}
\end{equation}

Recall that the $i$-th component of $B_a\partial\eta$  (resp. $(B_I-B_a)\p\eta$) is of the form $\epsilon_{ijk}a^{\mu j}\p_{\mu}\p\eta^k$ (resp. $\epsilon_{ijk}(\delta^{\mu j}-a^{\mu j})\p_{\mu}\p\eta^k$). So we apply the multiplicative Sobolev inequality \eqref{multiplicative} to get: 
\begin{equation}\label{curleta1}
\|(B_I-B_a)\p \eta\|_{H^{1+\delta}}\leq \|I-a\|_{H^{1.5+\delta}}\|\eta\|_{H^{3+\delta}}\leq\epsilon\|\eta\|_{H^{3+\delta}}.
\end{equation} 
In addition, using multiplicative Sobolev inequality, Young's inequality and Jensen's inequality, we have: 
\begin{equation}\label{curleta2}
\begin{aligned}
\|B_a\p\eta\|_{H^{1+\delta}}&\lesssim\|a\|_{H^{1.5+\delta}}\|\eta\|_{H^{3+\delta}}\lesssim \epsilon^{-1} \|\eta\|_{H^{2.5+\delta}}^4+\epsilon\|\eta\|_{H^{3+\delta}}^2\\
&\lesssim \epsilon^{-1}\|\eta(0)\|_{H^{2.5+\delta}}^4+\epsilon^{-1}\int_0^t \|v\|_{H^{2.5+\delta}}^4 +\epsilon \|\eta\|_{3+\delta}^2
\end{aligned}
\end{equation}
holds for sufficiently small $t$. 
Also,
\begin{equation}\label{curleta3}
\|\curl \eta(t)\|_{H^{1+\delta}}\lesssim \|\eta(t)\|_{H^{2+\delta}}\leq \|\eta(0)\|_{H^{2+\delta}}+\int_0^t\|v\|_{H^{2+\delta}},
\end{equation}
and hence
\begin{equation}\label{curleta}
\|\curl\eta\|_{H^{2+\delta}}\lesssim \epsilon^{-1}P(\|\eta(0)\|_{H^{2.5+\delta}})+\epsilon P(\|\eta\|_{H^{3+\delta}})+\epsilon^{-1}\int_0^tP(\|v\|_{H^{2.5+\delta}}).
\end{equation}

Now summing up \eqref{diveta}, \eqref{bdryeta} and \eqref{curleta}, we get the $H^{3+\delta}$ estimates of $\eta$.

\begin{thm} The following estimates hold in a sufficiently short time interval $[0,T]$: 
\begin{equation}\label{eta3}
\|\eta\|_{H^{3+\delta}} \lesssim_{\epsilon_0} \frac{\epsilon_0}{2}\|a^3_{\alpha}S\eta^{\alpha}\|_{L^2(\Gamma_1)}+\epsilon^{-1}P(\|\eta(0)\|_{H^{2.5+\delta}})+\epsilon P(\|\eta\|_{H^{3+\delta}})+\epsilon^{-1}\int_0^tP(\|v\|_{H^{2.5+\delta}}).
\end{equation}
\end{thm}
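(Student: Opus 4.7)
The plan is to reduce $\|\eta\|_{H^{3+\delta}}$ to three tractable pieces via a standard Hodge-type div-curl inequality:
\[
\|\eta\|_{H^{3+\delta}} \lesssim \|\eta\|_{L^2} + \|\dive\eta\|_{H^{2+\delta}} + \|\curl\eta\|_{H^{2+\delta}} + \|(\cp\eta)\cdot N\|_{H^{1.5+\delta}(\partial\Omega)},
\]
and then to use the three estimates established just above, namely \eqref{diveta}, \eqref{curleta}, and \eqref{bdryeta}. The real analytic work has been done in the preceding subsection; the content of the theorem is to combine these bounds consistently with the boundary energy recorded in Section \ref{section 4}.

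The divergence piece is the easiest: the incompressibility constraint $A_a v = 0$ lets me write $A_a\p\eta(t) = A_a\p\eta(0) + \int_0^t (A_{a_t}\p\eta - A_{\p a}v)\,ds$ after integration in time, and Lemma \ref{estimatesofa} together with the multiplicative Sobolev inequality controls the right-hand side by $\|\eta(0)\|_{H^{3+\delta}} + \int_0^t P(\|\eta\|_{H^{3+\delta}}, \|v\|_{H^{2.5+\delta}})$, with the discrepancy $(A_I - A_a)\p\eta$ being $\epsilon$-small by Lemma \ref{estimatesofa}(7). For the boundary term, I would split $S\eta\cdot N = a^3_\alpha S\eta^\alpha + (\delta^3_\alpha - a^3_\alpha)S\eta^\alpha$: the first summand is precisely the boundary energy $\|a^3_\alpha S\eta^\alpha\|_{L^2(\Gamma_1)}$ produced by the Taylor sign in Section \ref{section 4}, the second is again $\epsilon$-small, and the remaining low-order part is absorbed by $\|\eta(0)\|_{H^{2+\delta}} + \int_0^t \|v\|_{H^{2+\delta}}$.

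The main obstacle is the curl bound. Because the Lorentzian force destroys the Cauchy invariance, I cannot hope for a linear-in-time propagation of $\curl\eta$. Instead, writing $\curl\p\eta = B_a\p\eta + (B_I - B_a)\p\eta$, the only available estimate for $B_a\p\eta$ is the nonlinear one $\|B_a\p\eta\|_{H^{1+\delta}} \lesssim \|a\|_{H^{1.5+\delta}}\|\eta\|_{H^{3+\delta}}$. Applying Young's inequality splits this as $\epsilon^{-1}\|\eta\|_{H^{2.5+\delta}}^4 + \epsilon\|\eta\|_{H^{3+\delta}}^2$, and propagating the first factor through $\eta(t) = \eta(0) + \int_0^t v\,ds$ produces the $\epsilon^{-1}P(\|\eta(0)\|_{H^{2.5+\delta}})$ and $\epsilon^{-1}\int_0^t P(\|v\|_{H^{2.5+\delta}})$ contributions visible on the right of \eqref{eta3}. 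This $\epsilon^{-1}$ prefactor is precisely the structural reason that smallness of the fluid height $\beq$ will be needed at the Gronwall stage of the main argument, as emphasized in Section \ref{section 1.3}.

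To finish, I would sum the three bounds together with the trivial control $\|\eta\|_{L^2} \le \|\eta(0)\|_{L^2} + \int_0^t \|v\|_{L^2}$, and then absorb the single $\epsilon P(\|\eta\|_{H^{3+\delta}})$ term appearing on the right (which gathers all the $\epsilon$-small contributions from the three pieces) into the left-hand side by taking $\epsilon$ sufficiently small. This yields \eqref{eta3} directly.
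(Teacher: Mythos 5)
Your route is the same as the paper's: apply the Hodge-type div-curl inequality \eqref{etaH3.50}, control the divergence via the time-integrated identity for $A_a\p\eta$ using $A_av=0$, split the boundary trace into $a^3_\alpha S\eta^\alpha$ plus an $\epsilon$-small piece, and handle the curl via $B_a\p\eta$ with Young's inequality producing the $\epsilon^{-1}$ factors. One small point worth flagging: your final sentence proposes to \emph{absorb} the collected $\epsilon P(\|\eta\|_{H^{3+\delta}})$ into the left-hand side, but \eqref{eta3} as stated retains this term on the right. The reason is that the Young's-inequality byproduct from \eqref{curleta2} is $\epsilon\|\eta\|_{H^{3+\delta}}^2$, which is quadratic in the norm and therefore cannot be absorbed into the linear left-hand side without an a priori bound (unlike the genuinely linear-in-norm $\epsilon$-errors from the divergence and boundary terms, which one may absorb). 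The paper deliberately keeps $\epsilon P(\|\eta\|_{H^{3+\delta}})$ on the right and deals with it later, at the Gronwall stage, after squaring and combining with the tangential energy. So the sum step should end with \eqref{eta3} as written, not with an absorption.
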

\begin{flushright}
$\square$
\end{flushright}

\subsection{Propagation of the physical sign condition}
For the MHD system, we still need to show that the physical sign condition \eqref{taylor0} can be propagated to a positive time if it holds for the initial data, that is, $-\p_3 Q|_{\Gamma_1}\geq \epsilon_0>0$ holds in a short time interval $[0,T]$ for some $\epsilon_0$, provided $-\p_3 Q|_{\Gamma_1}\geq \epsilon_0'>0$ holds at $t=0$ for some $\epsilon_0'$. We start with the following lemma: 
\begin{lem} \label{Igor}
Let $T>0$ be fixed. Assume $f: [0,T]\times \Gamma_1 \to \R$ satisfies $f\in L^{\infty}([0,T]; H^{1.5}(\Gamma_1))$ and $\p_t f\in L^{\infty}([0,T]; H^{0.5}(\Gamma_1))$, then $f\in C^{0,\frac{1}{4}}([0,T]\times \Gamma_1)$.
\end{lem}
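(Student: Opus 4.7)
My plan is to estimate the spatial and temporal moduli of continuity separately and then combine them by the triangle inequality. For the spatial modulus I would use Sobolev embedding on the two--dimensional manifold $\Gamma_1\cong\T^2$: since $H^{1.5}(\Gamma_1)\hookrightarrow C^{0,1/2}(\Gamma_1)$, the hypothesis $f\in L^\infty_t H^{1.5}_x$ immediately yields, uniformly in $t$,
\[
|f(t,x)-f(t,y)|\lesssim\|f(t,\cdot)\|_{H^{1.5}(\Gamma_1)}\,|x-y|^{1/2}\lesssim|x-y|^{1/2},
\]
which is already better than the $1/4$ exponent we need in the spatial direction.

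For the temporal modulus I would combine the fundamental theorem of calculus with Sobolev interpolation. Writing $f(t+h,\cdot)-f(t,\cdot)=\int_t^{t+h}\p_s f(s,\cdot)\,ds$ and using $\p_t f\in L^\infty_t H^{0.5}_x$, Minkowski's inequality in $H^{0.5}(\Gamma_1)$ gives $\|f(t+h,\cdot)-f(t,\cdot)\|_{H^{0.5}(\Gamma_1)}\lesssim h$, while the uniform $H^{1.5}$ bound gives $\|f(t+h,\cdot)-f(t,\cdot)\|_{H^{1.5}(\Gamma_1)}\lesssim 1$. The standard Fourier--side interpolation
\[
\|u\|_{H^s(\Gamma_1)}\leq\|u\|_{H^{s_0}(\Gamma_1)}^{\theta}\|u\|_{H^{s_1}(\Gamma_1)}^{1-\theta},\qquad s=\theta s_0+(1-\theta)s_1,
\]
applied with $s_0=0.5$, $s_1=1.5$, and $\theta=1/4$ (so $s=1.25$), then produces
\[
\|f(t+h,\cdot)-f(t,\cdot)\|_{H^{1.25}(\Gamma_1)}\lesssim h^{1/4}.
\]
Since $1.25>1=\dim\Gamma_1/2$ we have $H^{1.25}(\Gamma_1)\hookrightarrow L^\infty(\Gamma_1)$, hence $\sup_{x\in\Gamma_1}|f(t+h,x)-f(t,x)|\lesssim h^{1/4}$.

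Combining the two bounds, for any $(t,x),(s,y)\in[0,T]\times\Gamma_1$,
\[
|f(t,x)-f(s,y)|\leq|f(t,x)-f(s,x)|+|f(s,x)-f(s,y)|\lesssim|t-s|^{1/4}+|x-y|^{1/2},
\]
which gives $f\in C^{0,1/4}([0,T]\times\Gamma_1)$. I do not foresee a real obstacle: the exponent $1/4$ is forced by the arithmetic of interpolating halfway between $H^{0.5}$ and $H^{1.5}$ in order to clear the Sobolev embedding threshold $s>1$ needed to pass to $L^\infty_x$, and every remaining ingredient (Minkowski, Plancherel--based interpolation on $\T^2$, and Sobolev embedding into $L^\infty$ and $C^{0,1/2}$) is standard on the flat torus.
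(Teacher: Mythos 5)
Your proof is correct, but it follows a genuinely different route from the one in the paper. The paper's argument treats $[0,T]\times\Gamma_1$ as a single three-dimensional manifold: it observes that the hypotheses put both $\partial_1 f,\partial_2 f$ (through the $L^\infty_t H^{1.5}_x$ bound) and $\partial_t f$ in $L^\infty_t H^{0.5}_x\hookrightarrow L^\infty_t L^4_x\hookrightarrow L^4_{t,x}$, so that $f\in W^{1,4}([0,T]\times\Gamma_1)$, and then concludes by a single application of Morrey's embedding $W^{1,4}\hookrightarrow C^{0,1-3/4}=C^{0,1/4}$ on the three-dimensional space-time slab. Your argument instead splits the spatial and temporal regularity: you get a $1/2$-H\"older spatial modulus directly from $H^{1.5}(\T^2)\hookrightarrow C^{0,1/2}(\T^2)$, and a $1/4$-H\"older temporal modulus from the fundamental theorem of calculus together with the interpolation $\|u\|_{H^{1.25}}\le\|u\|_{H^{0.5}}^{1/4}\|u\|_{H^{1.5}}^{3/4}$ and the embedding $H^{1.25}(\T^2)\hookrightarrow L^\infty(\T^2)$, and then combines by the triangle inequality. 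Both proofs are sound. The paper's is shorter and packages everything into one Morrey embedding; yours is somewhat more elementary (only one-dimensional FTC plus standard spectral interpolation on $\T^2$), and it records the sharper fact that the spatial modulus is actually $1/2$-H\"older, with only the time direction forcing the exponent down to $1/4$.
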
\label{tl}
\begin{proof}
Since $f\in L^{\infty}([0,T]; H^{1.5}(\Gamma_1))$, we have $\p_1 f, \p_2 f\in L^{\infty}([0,T]; H^{0.5}(\Gamma_1))$. By Sobolev embedding and H\"older's inequality, we have $$L^{\infty}([0,T]; H^{0.5}(\Gamma_1))\hookrightarrow L^{\infty}([0,T]; L^4(\Gamma_1))\hookrightarrow L^4([0,T]; L^4(\Gamma_1))=L^4([0,T]\times\Gamma_1),$$ which implies $f\in W^{1,4}([0,T]\times\Gamma_1).$ Finally, we use Morrey's embedding $W^{1,4}([0,T]\times\Gamma_1)\hookrightarrow C^{0,\frac{1}{4}}([0,T]\times\Gamma_1)$ to conclude that $f\in C^{0,\frac{1}{4}}([0,T]\times \Gamma_1)$.
\end{proof}

Recall we have shown that $Q\in L^{\infty}([0,T]; H^{3+\delta}(\Omega))$ and $Q_t\in L^{\infty}([0,T]; H^{2.5+\delta}(\Omega))$. This, together with the trace lemma, gives $\p_3Q|_{\Gamma_1}\in L^{\infty}([0,T]; H^{1.5}(\Gamma_1))$ and  $\p_3Q_t|_{\Gamma_1}\in L^{\infty}([0,T]; H^{0.5}(\Gamma_1))$. Therefore, we are able to set $f=\p_3Q$ in Lemma \ref{tl} to see that $\p_3 Q$ is 1/4-H\"older continuous in $[0,T]\times \Gamma_1$. Now, suppose $-\p_3 Q|_{\Gamma_1}\geq \epsilon_0'$ holds at $t=0$ for some $\epsilon_0'>0$, then there exists a $\epsilon_0>0$ such that $-\p_3 Q|_{\Gamma_1}>\epsilon_0$  for all $t\in [0,T]$ if the time $T$ is chosen sufficiently small. This verifies that the physical sign condition \eqref{taylor0} can be propagated to a positive time, provided it holds at $t=0$.

\subsection{Gronwall type argument}

Now we recall that 
\begin{equation}
N(t): =\|\eta(t)\|_{H^{3+\delta}}^2+\|v(t)\|_{H^{2.5+\delta}}^2+\|b(t)\|_{H^{2.5+\delta}}^2.
\end{equation}
From \eqref{T0}, \eqref{vbH2.5} and \eqref{eta3}, we have : 
\begin{equation}\label{Gronwall}
\begin{aligned}
N(t)&\lesssim \epsilon P(\|\eta(t)\|_{H^{3+\delta}})++ P(N(0))+P(N(t))\int_0^t P(N(s))ds\\
&~~~~+\epsilon^{-1}P(\|\eta(0)\|_{H^{2.5+\delta}})+\epsilon^{-1}\int_0^t \|v(s)\|_{H^{2.5+\delta}}\,ds.
\end{aligned}
\end{equation}
 For fixed $\epsilon\ll 1$, recall that $\Omega=\mathbb{T}^2\times (0,\beq)$ and $\eta(0)=Id$, one may choose $\beq$ sufficiently small so that $\epsilon^{-1}P(\|\eta(0)\|_{H^{2.5+\delta}}) \leq 1$. Then by a Gronwall-type argument in \cite{tao2006nonlinear} we conclude that: 
\begin{equation}
N(t) \lesssim 1+P(N(0)), \quad \text{when}\q t\in [0,T],
\end{equation}
for some $T=T(N(0), \beq)$. 
\begin{flushright}
$\square$
\end{flushright}

\section{The case of a general domain}
In this section we show how to adapt the ideas used in the proof on Theorem \ref{MHDthm} to the case of a general bounded domain with small volume. The physical situation we have in mind is that of a conducting liquid droplet with sufficiently small volume. We shall adapt the idea used in Section 12 of \cite{DKT} to our case. The goal of this section is to prove:
\thm 
Let $\Omega\subset \mathbb{R}^3$ be a bounded domain with smooth boundary $\Gamma$, and denote by $n$ the unit outward normal to $\Gamma$.  Let $(\eta, v, b)$ be the solution of 
\begin{equation}
\begin{cases}
\partial_tv_{\alpha}-b_{\beta}a^{\mu\beta}\partial_{\mu}b_{\alpha}+a^{\mu}_{\alpha}\partial_{\mu}Q=0~~~& \text{in}~[0,T]\times\Omega;\\
\partial_t b_{\alpha}-b_{\beta}a^{\mu\beta}\partial_{\mu}v_{\alpha}=0~~~&\text{in}~[0,T]\times \Omega ;\\
a^{\mu}_{\alpha}\partial_{\mu}v^{\alpha}=0,~~a^{\mu}_{\alpha}\partial_{\mu}b^{\alpha}=0~~~&\text{in}~[0,T]\times\Omega;\\
a^{\mu\nu}b_{\mu}b_{\nu}=c^2,~Q=\frac{1}{2}c^2,~ a^{\mu}_{\nu}b^{\nu}n_{\mu}=0 ~~~&\text{on}~\Gamma_;
\end{cases}
\end{equation}
 and $\delta\in(0,0.5)$. Assume that $v(0,\cdot)=v_0\in H^{2.5+\delta}(\Omega)$ and $b(0,\cdot)=b_0\in H^{2.5+\delta}(\Omega)$ be divergence free vector fields and $a^{\mu}_{\nu}(0)b_0^\nu n_\mu=0$ on $\Gamma$. Let
\begin{equation}
N(t): =\|\eta(t)\|_{H^{3+\delta}}^2+\|v(t)\|_{H^{2.5+\delta}}^2+\|b(t)\|_{H^{2.5+\delta}}^2.
\end{equation}
Then if $\text{diam}(\Omega):=\bar{\epsilon}\ll 1/8$, there exists a $T>0$, depending only on $N(0)$ and $\beq$ such that $N(t)\leq P(N(0))$ for all $t\in [0,T]$,  provided the physical sign condition
\begin{equation}
-\frac{\p Q}{\p n}\big|_{t=0} \geq \epsilon_0>0,\q\text{on}\,\,\Gamma
\end{equation}
holds.

\paragraph*{Flatten the boundary:} Let $\Omega\subset \mathbb{R}^3$ be a bounded domain with smooth boundary $\Gamma$ with diameter $8\bar{\epsilon}\ll 1$.  Given $y_0\in \Gamma$, there exists $r>0, r \leq  4\bar{\epsilon}$ and a smooth function $\phi$ such that (after a rigid motion and relabeling the coordinates if necessary) we have
$$
\Omega\cap B_r(y_0) = \{y\in B_r(y_0): y_3\geq \phi (y_1,y_2)+1\}.
$$
Now, we take coordinates that flatten the boundary near $y_0$. To be more specific, there exists $R>0$ and a diffeomorphism 
$$
\Phi:\Omega\cap B_r(y_0)\rightarrow  B_R(0,0,1)\cap \{z_3\geq 1\}
$$
such that $\Phi(y_1,y_2,y_3)=(y_1,y_2, y_3-\phi(y_1,y_2))$. Note that $\det (D\Phi)=1$, and so $\det (D\Phi^{-1})=1$. Denoting $\Psi=\Phi^{-1}$ and $\psi=\phi^{-1}$, we have
$$
\Psi(z_1, z_2, z_3)=(z_1,z_2, z_3+\psi(z_1,z_2)). 
$$
Moreover, we must have $R\leq 4\bar{\epsilon}$ since both $\Phi$ and $\Psi$ are volume-preserving diffeomorphisms. 

\paragraph*{The local Lagrangian map and the cut-off functions:} Consider the Lagrangian map $\eta:\Omega\to \Omega(t)$, and set $\tl\eta=\eta\circ \Psi$. Then $\p_t \tl\eta= \p_t\eta\circ \Psi=u\circ \tl\eta$, where $u$ is the velocity of the moving domain $\Omega(t)$. In view of this, if we introduce $$\tl v=u\circ \tl\eta, \q \tl b=B\circ \tl\eta,\q \tl a=[\p \tl\eta]^{-1},\q \tl Q=Q\circ \tl \eta,$$ then these new variables verify the incompressible MHD equations in the domain $B_R(0,0,1)\cap \{z_3\geq 1\}$. We thus use suitably chosen cut-off functions to produce local estimate, passing to the global estimate by the standard gluing procedure. Let $\theta$ be a smooth cut-off function such that $0\leq \theta \leq 1$ with $\theta=1$ in $\bar{B}_{R/5}(0,0,1)$ and $\text{supp}\,\theta\subset B_{R/4}(0,0,1)$. Therefore, extending all quantities to be identically $0$ outside  $B_{R/4}(0,0,1)$ and since $R\leq 4\bar{\epsilon}$, we may consider the equations and variables defined on the reference domain $\tl\Omega=\mathbb{T}^2\times (0,\bar{\epsilon})$. This allows us to adapt the tangential energy estimates in Section \ref{section 4}, but all integrands should carry the cut-off function $\theta$. Also, unlike Section \ref{section 4}, no integral over the lower boundary $\Gamma_0$ of $\tl\Omega$ is present since all variables vanish there in view of the way they have been extended. 

\paragraph*{The energy estimate:}
First, since $\tl\eta(0,z)=(z_1,z_2, z_3+\psi(z_1,z_2))$, a direct computation yields that at $t=0$ we have
\begin{equation*}
\tl a(0)=
\begin{pmatrix}
1&0&-\p_1\psi\\
0&1&-\p_2\psi\\
0&0&1
\end{pmatrix}.
\end{equation*}
In the proof of Theorem \ref{MHDthm}, for which $\psi=0$, we used $a(0)-I=O$, where $O$ stands for the zero matrix, to produce some small parameters (i.e., Lemma \ref{estimatesofa} (7)) in the energy estimates. We need $\p \psi$ to be small in order to apply the same argument here.  This can be achieved since we may assume, without loss of generality, that $\p\psi(0,0)=0$, and so the smallness of $\|\p\psi\|_{L^\infty(\Gamma)}$ can be achieved by the mean value theorem possibly after reducing $\bar{\epsilon}$, provided that $\psi\in H^{2.5+\delta}(\Gamma)$. 

We now apply the energy estimates of Section \ref{section 4} with 
\begin{equation}\label{s}
S\cdot = \cp^{2.5+\delta}(\theta\cdot).
\end{equation}
In order to simplify the exposition, we will omit tildes from all quantities and continue to label $\eta, v, b, a$ and $q$, which are only locally defined Lagrangian flow map, velocity, magnetic field, cofactor matrix, and pressure, respectively.  We start by differentiating $\|S v\|_{L^2(\tl\Omega)}$, i.e., 

\begin{equation}\label{V'}
\begin{aligned}
\frac{1}{2}\frac{d}{dt}\int_{\tl\Omega}(Sv^{\alpha})(Sv_{\alpha}) &=\int_{\tl\Omega}(Sv^{\alpha})(\partial_t Sv_{\alpha})  \\
&=-\int_{\tl\Omega}(Sv^{\alpha})(S(a^{\mu}_{\alpha}\partial_{\mu}Q))+\int_{\tl\Omega}(Sv^{\alpha})(S(b_0^\mu \partial_{\mu}b_{\alpha}))\\
&=: I+J.
\end{aligned}
\end{equation}
To control $I$, we have: 
\begin{equation}
\begin{aligned}\label{I'}
I&=-\int_{\tl\Omega}\cp^{2.5+\delta}(\theta v^{\alpha})\cp^{2.5+\delta}(\theta a^{\mu}_{\alpha}\p_{\mu}Q)\\
&=\underbrace{-\int_{\tl\Omega}\cp^{2.5+\delta}(\theta v^{\alpha}) \theta a^{\mu}_{\alpha}(\cp^{2.5+\delta}[\p_{\mu}Q])}_{I_1}\underbrace{-\int_{\tl \Omega}\cp^{2.5+\delta}(\theta v^{\alpha})\cp^{2.5+\delta}(\theta a^{\mu}_{\alpha})(\p_{\mu}Q)}_{I_2}\\ 
&~~~~~\underbrace{-\int_{\tl \Omega} \cp^{2.5+\delta}(\theta v^{\alpha})[\cp^{2.5+\delta}(\theta a^{\mu}_{\alpha}\p_{\mu}Q)-\theta a^{\mu}_{\alpha}(\cp^{2.5+\delta}[\p_{\mu}Q])-\cp^{2.5+\delta}(\theta a^{\mu}_{\alpha})(\p_{\mu}Q)]}_{I_3}.
\end{aligned}
\end{equation}
Control of $I_1$: We integrate $\p_\mu$ by parts to get
\begin{align}
I_1
=\underbrace{\int_{\tl \Omega}\theta a^{\mu}_{\alpha}(\cp^{2.5+\delta}[\theta\p_{\mu}v^{\alpha}])(\cp^{2.5+\delta}Q)}_{I_{11}}-\int_{\Gamma_1}(\underbrace{\cp^{2.5+\delta}Q}_{=0})(\theta a^{\mu}_{\alpha}Sv^{\alpha}N_{\mu}) dS(\Gamma_1)+\mathcal{R}.
\end{align}
Here and throughout, $\mathcal{R}$ contains error terms when the derivatives fall on $\theta$, which can be controlled by the RHS of \eqref{I1general}. Now,
\begin{align}
I_{11}
=\int_{\tl\Omega}\theta\underbrace{S(a^{\mu}_{\alpha}\p_{\mu}v^{\alpha})}_{=0}(\cp^{2.5+\delta}Q)\underbrace{-\int_{\tl\Omega}\theta(Sa^{\mu}_{\alpha})\p_{\mu}v^{\alpha}(\cp^{2.5+\delta}Q)}_{I_{112}}\nonumber\\
-\underbrace{\int_{\tl\Omega}\theta[S(a^{\mu}_{\alpha}\p_{\mu}v^{\alpha})-(Sa^{\mu}_{\alpha})\p_{\mu}v^{\alpha}-a^{\mu}_{\alpha}S\p_{\mu}v^{\alpha}](\cp^{2.5+\delta} Q)}_{I_{113}}.
\end{align}
$I_{113}$ can be controlled using the Kato-Ponce inequality. To do this, however, each separated term needs to be properly cut-off since the fractional derivatives destroy the compact support. Let $\bar\theta$ be a smooth cut-off function such that $0\leq \bar\theta \leq 1$ with $\text{supp}\,\bar{\theta}\subset B_{R/3}(0,0,1)$ and $\bar{\theta}=1$ on $\text{supp}\,\theta$. The construction of $\bar{\theta}$ allows us to introduce $\bar{\theta}$ without changing given expressions. 
\nota We shall use $C_{\theta}$ to denote constants depend on $||\theta||_{H^{3+\delta}}$ and $||\bt||_{H^{3+\delta}}$ throughout the rest of this section.

Now, commutating $\theta$ through $\cp^{2.5+\delta}$ we get
\begin{equation}
\begin{aligned}\label{similar}
I_{113} &\lesssim \|\cp^{2.5+\delta}[\theta(a^{\mu}_{\alpha}\theta\p_{\mu}v^{\alpha})]-\theta\cp^{2.5+\delta}(a^{\mu}_{\alpha}\theta\p_{\mu}v^{\alpha})\|_{L^{3/2}}\|\bt\cp^{2.5+\delta} Q\|_{L^3}\\
&~~~~+\|\cp^{2.5+\delta}[\theta(a^{\mu}_{\alpha}\theta\p_{\mu}v^{\alpha})]-\cp^{2.5+\delta}(\theta a^{\mu}_{\alpha})\theta\p_{\mu}v^{\alpha}-\theta a^{\mu}_{\alpha}\cp^{2.5+\delta}[\theta\p_{\mu}v^{\alpha}]\|_{L^{3/2}}\|\bt\cp^{2.5+\delta} Q\|_{L^3}.
\end{aligned}
\end{equation}
The first line is bounded by
\begin{equation}
\|\p\theta\|_{L^\infty}\|a^{\mu}_{\alpha}\theta\p_{\mu}v^{\alpha}\|_{W^{1.5+\delta,3/2}}+\|\theta\|_{W^{2.5+\delta,3/2}}\|a^{\mu}_{\alpha}\theta\p_{\mu}v^{\alpha}\|_{L^\infty}\leq C_\theta ||\theta a||_{H^{1.5+\delta}}||\bt v||_{2.5+\delta},
\end{equation}
and the second line is bounded by
\begin{align} \label{similar''}
&~~~~(\|\theta a^{\mu}_{\alpha}\|_{W^{1.5+\delta,3}}\|\theta\p_{\mu}v^{\alpha}\|_{W^{1,3}}+\|\theta a^{\mu}_{\alpha}\|_{W^{1,6}}\|\theta \p_{\mu}v^{\alpha}\|_{H^{1.5+\delta}})\|\bt\cp^{2.5+\delta}Q\|_{L^3}\nonumber\\
 &\leq C_\theta\|\bt Q\|_{H^{3+\delta}}\|\theta a\|_{H^{2+\delta}}^2\|\theta v\|_{H^{2.5+\delta}}.
\end{align}
Moreover, we integrate $1/2$-tangential derivatives by parts and then $I_{112}$ becomes 
\begin{equation} \label{similar'}
\int_{\tl \Omega}\TP^{2+\delta}[\theta a^{\mu}_{\alpha}] \TP^{0.5}(\theta\cp^{2.5+\delta}Q\p_{\mu}v^{\alpha})+\mathcal{R}
\end{equation}
 where
\begin{align}
&~~~~\int_{\tl \Omega}\TP^{2+\delta}[\theta a^{\mu}_{\alpha}] \TP^{0.5}(\theta\cp^{2.5+\delta}Q\p_{\mu}v^{\alpha})\\
&\lesssim \|\theta a\|_{H^{2+\delta}}(\|\theta \cp^{2.5+\delta}Q\|_{H^{0.5}}\|\bt\p_{\mu}v^{\alpha}\|_{L^{\infty}}+\|\theta \cp^{2.5+\delta}Q\|_{L^3}\|\bt \p_{\mu}v^{\alpha}\|_{W^{0.5,6}}) \\
&\leq C_\theta\|\theta a\|_{H^{2+\delta}}\|\theta Q\|_{H^{3+\delta}}\|\bt v\|_{H^{2.5+\delta}}.
\end{align}
Summing these up, we have
\begin{equation}
I_1 \leq C_\theta \Big(\|\theta a\|_{H^{2+\delta}}\|\theta Q\|_{H^{3+\delta}}\|\bt v\|_{H^{2.5+\delta}}+\|\bt Q\|_{H^{3+\delta}}\|\theta a\|_{H^{2+\delta}}^2\|\theta v\|_{H^{2.5+\delta}}\Big). 
\label{I1general}
\end{equation}
\textbf{Control of $I_3$: } We have
\begin{align}
I_3&\lesssim \|\cp^{2.5+\delta}(\theta v)\|_{L^2}\|\cp^{2.5+\delta}(\theta a^{\mu}_{\alpha}\bt\p_{\mu}Q)-\theta a^{\mu}_{\alpha}(\cp^{2.5+\delta}[\bt\p_{\mu}Q])-\cp^{2.5+\delta}(\theta a^{\mu}_{\alpha})(\bt\p_{\mu}Q)\|_{L^2}\nonumber\\
&\leq C_\theta \|\theta v\|_{H^{2.5+\delta}}\|\theta a\|_{H^{2+\delta}}\|\bt Q\|_{H^{3+\delta}}.
\end{align}
\textbf{Control of $I_2$:} First it is easy to check that the decomposition \eqref{sm} remains valid, i.e., for any smooth function $u$, we have
\begin{align}
Su = \sum_{m=1}^2 S_m\p_m(\theta u)+S_0(\theta u),
\end{align}
where $S\cdot$ is defined as \eqref{s}, and $S_m, S_0$ are defined in \eqref{sm}. Then the analysis of \eqref{I20} suggests that it suffices to consider the term associated to $I_{21}$, i.e.,  
$$
I_{21}'=\sum_{m=1}^2\int_{\tl \Omega}\cp^{2.5+\delta}(\theta v^\alpha)[S_m (\theta\p_\beta \p_m \eta^\nu)](a^\mu_\nu a^\beta_\alpha)\p_\mu Q.
$$
Writing $\sum_{m=1}^2S_m \p_m=\cp^{2.5+\delta}-S_0$, we have
\begin{align}
I_{21}' = \int_{\tl \Omega}\cp^{2.5+\delta}(\theta v^\alpha)\cp^{2.5+\delta}(\theta\p_\beta \eta^\nu)(a^\mu_\nu a^\beta_\alpha)\p_\mu Q-\int_{\tl \Omega}\cp^{2.5+\delta}(\theta v^\alpha)S_0(\theta\p_\beta \eta^\nu)(a^\mu_\nu a^\beta_\alpha)\p_\mu Q+\mathcal{R},
\end{align}
where the second term is controlled directly by $C_\theta\|\theta v\|_{H^{2.5+\delta}}\|\bt a\|_{H^{1.5+\delta}}^2\|\bt Q\|_{H^{1.5}}\|\theta \eta\|_{H^{2.5+\delta}}$.
For the first term, we integrate $\p_\beta$ by parts to obtain
\begin{equation}
\begin{aligned}\label{I21'}
I_{21}'
&=\underbrace{-\int_{\tl \Omega}\cp^{2.5+\delta}\p_\beta(\theta v^\alpha)(\cp^{2.5+\delta}\theta\eta^\nu)(a^\mu_\nu a^\beta_\alpha)\p_\mu Q}_{I_{211}'} -\int_{\tl \Omega}\cp^{2.5+\delta}(\theta v^\alpha)(\cp^{2.5+\delta}\theta\eta^\nu)\p_\beta[(a^\mu_\nu a^\beta_\alpha)\p_\mu Q] \\
&~~~~+\underbrace{\int_{\Gamma_1}\cp^{2.5+\delta}(\theta v^{\alpha})(\cp^{2.5+\delta}\theta\eta^{\nu}) a^{\mu}_{\nu}a^{\beta}_{\alpha}(\partial_{\mu}Q)N_{\beta}dS(\Gamma_1)}_{I_{212}'}.
\end{aligned}
\end{equation}
There is no problem to control the second term in the first line of \eqref{I21'} and $I'_{212}$ is controlled analogous to $I_{212}$ in Section \ref{section 4}. For $I_{211}'$, we write
\begin{align}
I_{211}'&=-\int_{\tl \Omega}a^\beta_\alpha S\p_\beta v^\alpha( a^\mu_\nu S\eta^\nu)\p_\mu Q+\mathcal{R}\nonumber\\
&=\int_{\tl\Omega} (S a_{\alpha}^\beta)(\p_\beta v^\alpha)( a^\mu_\nu S\eta^\nu)\p_\mu Q +  \int_{\tl\Omega} [S(a_\alpha^\beta \p_\beta v^\alpha)-(S a_{\alpha}^\beta)(\p_\beta v^\alpha)-a^\beta_\alpha S\p_\beta v^\alpha]( a^\mu_\nu S\eta^\nu)\p_\mu Q.
\end{align}
The first term can be treated similar to \eqref{similar'} by integrating $0.5$-derivatives by parts. The second term is equal to
\begin{align}
&~~~~\int_{\tl\Omega} [\cp^{2.5+\delta}(\theta a_\alpha^\beta \p_\beta v^\alpha)-\cp^{2.5+\delta}(\theta a_{\alpha}^\beta)(\p_\beta v^\alpha)-\theta a^\beta_\alpha \cp^{2.5+\delta}\p_\beta v^\alpha]( a^\mu_\nu S\eta^\nu)\p_\mu Q\nonumber\\
&-\int_{\tl\Omega}a_\alpha^\beta [\cp^{2.5+\delta} (\theta \p_\beta v^\alpha)-\theta \cp^{2.5+\delta}\p_\beta v^\alpha]( a^\mu_\nu S\eta^\nu)\p_\mu Q.
\end{align}
The first line can be controlled similar to \eqref{similar''}, and since
\begin{equation}
\begin{aligned}
&~~~~\|\cp^{2.5+\delta} (\theta \p_\beta v^\alpha)-\theta \cp^{2.5+\delta}\p_\beta v^\alpha\|_{L^2}\\
& \lesssim \|\p\theta\|_{L^\infty} \|v\|_{H^{2.5+\delta}}+ \|\cp^{2.5+\delta}\theta\|_{L^2} \|\p v\|_{L^\infty}
\leq C_\theta \|v\|_{H^{2.5+\delta}}.
\end{aligned}
\end{equation} 
so the second line can be bounded by 
$
C_\theta \|\bt a\|_{H^{1.5+\delta}}^2\|\bt Q\|_{H^{2.5+\delta}}\|\theta\eta\|_{H^{2.5+\delta}}\|\bt v\|_{H^{2.5+\delta}}.
$
\bigskip

\noindent \textbf{Control of $J+\frac{d}{dt}\|Sb\|_{L^2}^2$}: This follows from the what has been done in Section \ref{section 4} except that the cancellation \eqref{J1K1}  holds up to a term of type $\mathcal{R}$, which can still be controlled appropriately. 

\bigskip

After covering $\Gamma$ with finitely many balls, the procedure described above yields the tangential energy estimates near the $\
\Gamma$. We still need to cover the region of $\Omega$ not covered by these balls. However, we have no problem to cover this region using finitely many balls with radius $r\leq 4\bar{\epsilon}$ and again reducing the tangential estimates to $\tl\Omega$. In addition, there are no boundary integrals on either $\Gamma_1$ and $\Gamma_0$. 

Finally, we need to show that the estimates in Section \ref{section 3} and Section \ref{section 5} are still valid in each local coordinate patch. This follows from adapting the estimates in Section \ref{section 3} and Section \ref{section 5} to the MHD equations after commuting $\theta$, i.e., 
\begin{equation}
\begin{cases}
\partial_t (\theta v_{\alpha})-b_{\beta}a^{\mu\beta}\partial_{\mu}(\theta b_{\alpha})+a^{\mu}_{\alpha}\partial_{\mu}(\theta Q)=-b_{\beta}a^{\mu\beta}(\partial_{\mu}\theta) b_{\alpha}+a^{\mu}_{\alpha}(\partial_{\mu}\theta) Q~~~& \text{in}~[0,T]\times\tl\Omega;\\
\partial_t (\theta b_{\alpha})-b_{\beta}a^{\mu\beta}\partial_{\mu}(\theta v_{\alpha})=-b_{\beta}a^{\mu\beta}(\partial_{\mu}\theta) v_{\alpha}~~~&\text{in}~[0,T]\times \tl\Omega ;\\
a^{\mu}_{\alpha}\partial_{\mu} v^{\alpha}=0,~~a^{\mu}_{\alpha}\partial_{\mu}b^{\alpha}=0~~~&\text{in}~[0,T]\times\tl \Omega;\\
a^{\mu\nu}b_{\mu}b_{\nu}=c^2,~Q=\frac{1}{2}c^2,~ a^{\mu}_{\nu}b^{\nu}N_{\mu}=0 ~~~&\text{on}~\Gamma_;\\
\end{cases}\label{MHDtheta}
\end{equation}
We can recover the equations for $Q$, $Q_t$, $B_a v$  and $B_a b$ modulo error terms involving derivatives land on $\theta$, but these contribute only to lower order terms.


\begin{thebibliography}{}

\bibitem[\protect\astroncite{Alazard et~al.}{2014}]{alazard2014cauchy}
Alazard, T., Burq, N., and Zuily, C. (2014).
\newblock On the cauchy problem for gravity water waves.
\newblock {\em Inventiones mathematicae}, 198(1): 71--163.

\bibitem[\protect\astroncite{Bourgain and Li}{2015}]{bourgain2015strong}
Bourgain, J. and Li, D. (2015).
\newblock Strong ill-posedness of the incompressible euler equation in
  borderline sobolev spaces.
\newblock {\em Inventiones mathematicae}, 201(1): 97--157.

\bibitem[\protect\astroncite{Cao and Wu}{2010}]{cao2010two}
Cao, C. and Wu, J. (2010).
\newblock Two regularity criteria for the 3D MHD equations.
\newblock {\em Journal of Differential Equations}, 248(9): 2263--2274.

\bibitem[\protect\astroncite{Chen and Wang}{2008}]{chen2008existence}
Chen, G.-Q. and Wang, Y.-G. (2008).
\newblock Existence and stability of compressible current-vortex sheets in
  three-dimensional magnetohydrodynamics.
\newblock {\em Archive for Rational Mechanics and Analysis}, 187(3): 369--408.

\bibitem[\protect\astroncite{Christodoulou and
  Lindblad}{2000}]{christodoulou2000motion}
Christodoulou, D. and Lindblad, H. (2000).
\newblock On the motion of the free surface of a liquid.
\newblock {\em Communications on Pure and Applied Mathematics},
  53(12): 1536--1602.

\bibitem[\protect\astroncite{Coutand and Shkoller}{2007}]{coutand2007LWP}
Coutand, D. and Shkoller, S. (2007).
\newblock Well-posedness of the free-surface incompressible euler equations
  with or without surface tension.
\newblock {\em Journal of the American Mathematical Society}, 20(3): 829--930.

\bibitem[\protect\astroncite{Disconzi and Ebin}{2014}]{disconzi2014limit}
Disconzi, M.~M. and Ebin, D.~G. (2014).
\newblock On the limit of large surface tension for a fluid motion with free
  boundary.
\newblock {\em Communications in Partial Differential Equations},
  39(4): 740--779.

\bibitem[\protect\astroncite{Disconzi and Ebin}{2016}]{disconzi2016free}
Disconzi, M.~M. and Ebin, D.~G. (2016).
\newblock The free boundary euler equations with large surface tension.
\newblock {\em Journal of Differential Equations}, 261(2): 821--889.

\bibitem[\protect\astroncite{Disconzi and Kukavica}{2019}]{disconzi2017prioriI}
Disconzi, M.~M. and Kukavica, I. (2019).
\newblock A priori estimates for the free-boundary euler equations with surface
  tension in three dimensions.
\newblock {\em Nonlinearity}, 32(9): 3369--3405.

\bibitem[\protect\astroncite{Disconzi et~al.}{2019}]{DKT}
Disconzi, M.~M., Kukavica, I., and Tuffaha, A. (2019).
\newblock A lagrangian interior regularity result for the incompressible free
  boundary euler equation with surface tension.
\newblock {\em SIAM Journal on Mathematical Analysis, 51(5), 3982–4022.}

\bibitem[\protect\astroncite{Friedlander and
  Vicol}{2011}]{friedlander2011global}
Friedlander, S. and Vicol, V. (2011).
\newblock Global well-posedness for an advection--diffusion equation arising in
  magneto-geostrophic dynamics.
\newblock In {\em Annales de l'Institut Henri Poincare (C) Non Linear
  Analysis}, volume~28, pages 283--301. Elsevier Masson.

\bibitem[\protect\astroncite{Friedlander and
  Vishik}{1990}]{friedlander1990nonlinear}
Friedlander, S. and Vishik, M. (1990).
\newblock Nonlinear stability for stratified magnetohydrodynamics.
\newblock {\em Geophysical \& Astrophysical Fluid Dynamics}, 55(1): 19--45.

\bibitem[\protect\astroncite{Friedlander and
  Vishik}{1995}]{friedlander1995stability}
Friedlander, S. and Vishik, M. (1995).
\newblock On stability and instability criteria for magnetohydrodynamics.
\newblock {\em Chaos:  An Interdisciplinary Journal of Nonlinear Science},
  5(2): 416--423.

\bibitem[\protect\astroncite{Gu and Wang}{2016}]{gu2016construction}
Gu, X. and Wang, Y. (2016).
\newblock On the construction of solutions to the free-surface incompressible
  ideal magnetohydrodynamic equations.
\newblock {\em Journal de Math\'ematiques Pures et Appliqu\'ees, Vol. 128: 1-41.}


\bibitem[\protect\astroncite{Hao}{2017}]{hao2016motion}
Hao, C. (2017).
\newblock On the Motion of Free Interface in Ideal Incompressible MHD.
\newblock {\em Archive for Rational Mechanics and Analysis}, Vol. 224: 515-553.

\bibitem[\protect\astroncite{Hao and Luo}{2014}]{hao2014priori}
Hao, C. and Luo, T. (2014).
\newblock A priori estimates for free boundary problem of incompressible
  inviscid magnetohydrodynamic flows.
\newblock {\em Archive for Rational Mechanics and Analysis}, 212(3): 805--847.

\bibitem[\protect\astroncite{Hao and Luo}{2018}]{hao2018ill}
Hao, C. and Luo, T. (2018).
\newblock Ill-posedness of free boundary problem of the incompressible ideal
  MHD.
\newblock {\em  Communications in Mathematical Physics, to appear.}

\bibitem[\protect\astroncite{Ignatova and Kukavica}{2016}]{ignatova2016local}
Ignatova, M. and Kukavica, I. (2016).
\newblock On the local existence of the free-surface euler equation with
  surface tension.
\newblock {\em Asymptotic Analysis}, 100(1-2): 63--86.

\bibitem[\protect\astroncite{Jiang et~al.}{2010}]{jiang2010incompressible}
Jiang, S., Ju, Q., and Li, F. (2010).
\newblock Incompressible limit of the compressible magnetohydrodynamic
  equations with periodic boundary conditions.
\newblock {\em Communications in Mathematical Physics}, 297(2): 371--400.

\bibitem[\protect\astroncite{Kato and Ponce}{1988}]{kato1988commutator}
Kato, T. and Ponce, G. (1988).
\newblock Commutator estimates and the euler and navier-stokes equations.
\newblock {\em Communications on Pure and Applied Mathematics}, 41(7): 891--907.

\bibitem[\protect\astroncite{Kukavica and
  Tuffaha}{2014}]{kukavica2014regularity}
Kukavica, I. and Tuffaha, A. (2014).
\newblock A regularity result for the incompressible euler equation with a free
  interface.
\newblock {\em Applied Mathematics \& Optimization}, 69(3): 337--358.

\bibitem[\protect\astroncite{Kukavica et~al.}{2017}]{kukavica2017local}
Kukavica, I., Tuffaha, A., and Vicol, V. (2017).
\newblock On the local existence and uniqueness for the 3d euler equation with
  a free interface.
\newblock {\em Applied Mathematics \& Optimization}, 76(3): 535--563.

\bibitem[\protect\astroncite{Lindblad}{2002}]{lindblad2002}
Lindblad, H. (2002).
\newblock Well-posedness for the linearized motion of an incompressible liquid
  with free surface boundary.
\newblock {\em Communications on Pure and Applied Mathematics},
  56(02): 153--197.

\bibitem[\protect\astroncite{Lindblad}{2005}]{lindblad2005well}
Lindblad, H. (2005).
\newblock Well-posedness for the motion of an incompressible liquid with free
  surface boundary.
\newblock {\em Annals of mathematics}, pages 109--194.

\bibitem[\protect\astroncite{Lindblad and Luo}{2018}]{lindblad2018priori}
Lindblad, H. and Luo, C. (2018).
\newblock A priori estimates for the compressible euler equations for a liquid
  with free surface boundary and the incompressible limit.
\newblock {\em Communications on Pure and Applied Mathematics}, Vol 71(7): 1273-1333.

\bibitem[\protect\astroncite{Luo}{2018}]{luo2017motion}
Luo, C. (2018).
\newblock On the motion of a compressible gravity water wave with vorticity.
\newblock {\em Annals of PDE}, 4(2): 1--71.

\bibitem[\protect\astroncite{Morando et~al.}{2014}]{morando2014well}
Morando, A., Trakhinin, Y., and Trebeschi, P. (2014).
\newblock Well-posedness of the linearized plasma-vacuum interface problem in
  ideal incompressible MHD.
\newblock {\em Quart. Appl. Math}, 72(3): 549--587.

\bibitem[\protect\astroncite{Nalimov}{1974}]{nalimov1974cauchy}
Nalimov, V. (1974).
\newblock The cauchy-poisson problem.
\newblock {\em Dinamika Splo{\v{s}}n. Sredy,(Vyp. 18 Dinamika Zidkost. so
  Svobod. Granicami)}, 254: 104--210.

\bibitem[\protect\astroncite{Schweizer}{2005}]{SchweizerFreeEuler}
Schweizer, B. (2005).
\newblock On the three-dimensional {E}uler equations with a free boundary
  subject to surface tension.
\newblock {\em Ann. Inst. H. Poincar\'e Anal. Non Lin\'eaire}, 22(6): 753--781.

\bibitem[\protect\astroncite{Secchi and Trakhinin}{2011}]{secchi2011well}
Secchi, P. and Trakhinin, Y. (2011).
\newblock Well-posedness of the linearized plasma-vacuum interface problem.
\newblock {\em arXiv: 1112.3101}.

\bibitem[\protect\astroncite{Secchi and Trakhinin}{2013}]{secchi2013well}
Secchi, P. and Trakhinin, Y. (2013).
\newblock Well-posedness of the plasma--vacuum interface problem.
\newblock {\em Nonlinearity}, 27(1): 105.

\bibitem[\protect\astroncite{Shatah and Zeng}{2008a}]{shatah2008geometry}
Shatah, J. and Zeng, C. (2008a).
\newblock Geometry and a priori estimates for free boundary problems of the
  euler's equation.
\newblock {\em Communications on Pure and Applied Mathematics}, 61(5): 698--744.

\bibitem[\protect\astroncite{Shatah and Zeng}{2008b}]{shatah2008priori}
Shatah, J. and Zeng, C. (2008b).
\newblock A priori estimates for fluid interface problems.
\newblock {\em Communications on Pure and Applied Mathematics}, 61(6): 848--876.

\bibitem[\protect\astroncite{Shatah and Zeng}{2011}]{shatah2011local}
Shatah, J. and Zeng, C. (2011).
\newblock Local well-posedness for fluid interface problems.
\newblock {\em Archive for Rational Mechanics and Analysis}, 199(2): 653--705.

\bibitem[\protect\astroncite{Sun et~al.}{2017}]{sun2017well}
Sun, Y., Wang, W., and Zhang, Z. (2017).
\newblock Well-posedness of the plasma-vacuum interface problem for ideal
  incompressible MHD.
\newblock {\em Archive for Rational Mechanics and Analysis, 234(1): 81-113.}

\bibitem[\protect\astroncite{Tao}{2006}]{tao2006nonlinear}
Tao, T. (2006).
\newblock {\em Nonlinear dispersive equations:  Local and global analysis}.
\newblock Number 106. American Mathematical Soc.

\bibitem[\protect\astroncite{Trakhinin}{2010}]{trakhinin2010well}
Trakhinin, Y. (2010).
\newblock On the well-posedness of a linearized plasma-vacuum interface problem
  in ideal compressible MHD.
\newblock {\em arXiv: 1005.4109}.

\bibitem[\protect\astroncite{Wang}{2012}]{wangyanjin2012}
Wang, Y. (2012).
\newblock Critical magnetic number in the magnetohydrodynamic rayleigh-taylor
  instability.
\newblock {\em Journal of Mathematical Physics}, 53(7).

\bibitem[\protect\astroncite{Wu}{2011}]{wu2011global}
Wu, J. (2011).
\newblock Global regularity for a class of generalized magnetohydrodynamic
  equations.
\newblock {\em Journal of Mathematical Fluid Mechanics}, 13(2): 295--305.

\bibitem[\protect\astroncite{Wu}{1997}]{wu1997LWPww}
Wu, S. (1997).
\newblock Well-posedness in sobolev spaces of the full water wave problem in
  2-D.
\newblock {\em Inventiones mathematicae}, 130(1): 39--72.

\bibitem[\protect\astroncite{Wu}{1999}]{wu1999LWPww}
Wu, S. (1999).
\newblock Well-posedness in sobolev spaces of the full water wave problem in
  3-D.
\newblock {\em Journal of the American Mathematical Society}, 12(2): 445--495.

\bibitem[\protect\astroncite{Zhang and Zhang}{2008}]{zhang2008free}
Zhang, P. and Zhang, Z. (2008).
\newblock On the free boundary problem of three-dimensional incompressible
  euler equations.
\newblock {\em Communications on Pure and Applied Mathematics}, 61(7): 877--940.

\end{thebibliography}
\end{document}